\renewcommand{\Re}{\operatorname{Re}}
\renewcommand{\Im}{\operatorname{Im}}
\def\R{\ensuremath\mathbb{R}}
\def\C{\ensuremath\mathbb{C}}
\def\Z{\ensuremath\mathbb{Z}}
\def\Q{\ensuremath\mathbb{Q}}
\def\Hb{\ensuremath\mathbb{H}}
\newtheorem{thm}{Theorem}[section]
\newtheorem{defi}[thm]{Definition}
\newtheorem{cor}[thm]{Corollary}
\newtheorem{lemma}[thm]{Lemma}
\newtheorem{prop}[thm]{Proposition}
\theoremstyle{remark}
\newtheorem{remark}[thm]{Remark}
\newtheorem{example}[thm]{Example}
\def\eps{\ensuremath\varepsilon}
\def\0{\emptyset}
\def\mod {\text{ {\rm mod} }}
\def\PSL{\mathrm{PSL}}
\def\CC{\mathcal{C}}
\def\FF{\mathcal{F}}
\def\Cc{\mathbf{C}}
\def\Cl{\mathrm{Cl}}
\def\NN{\mathcal{N}}
\def\supp{\mathrm{supp}}
\def\BB{\mathcal{B}}
\def\LL{\mathcal{L}}
\def\HH{\mathcal{H}}
\def\PP{\mathcal{P}}
\def\AA{\mathcal{A}}
\def\cc{\mathbf{c}}
\def\Hom{\mathrm{Hom}}
\def\modulo{\text{ \rm mod }}
\def\vol{\text{\rm vol}}
\numberwithin{equation}{section}
\begin{document}
\author{Asbj\o{}rn Christian Nordentoft}

\address{Department of Mathematical Sciences, University of Copenhagen, Universitetsparken 5, 2100 Copenhagen \O, Denmark}
\email{\href{mailto:nordentoft@math.ku.dk}{nordentoft@math.ku.dk}}

\urladdr{\href{https://sites.google.com/view/asbjornnordentoft/}{https://sites.google.com/view/asbjornnordentoft/}} 

\author{Ser Peow Tan}

\address{Department of Mathematics, National University of Singapore, 10 Lower Kent Ridge Road, 119076, Singapore}

\email{\href{mailto:mattansp@nus.edu.sg}{mattansp@nus.edu.sg}}

\urladdr{\href{https://sites.google.com/view/serpeowtan/home}{https://sites.google.com/view/serpeowtan/home}}

\title{Equidistribution of partial coverings from closed geodesics}
\begin{abstract}
Given a finite volume hyperbolic surface, a fundamental polygon and an oriented closed geodesic, we associate a partial covering of the surface. % by \emph{painting to the left}.  
%Given a discrete and cofinite Fuchsian group, a fundamental polygon and an oriented closed geodesic, we define a partial covering of the corresponding hyperbolic orbifold. 
We prove that given a sequence of collections of oriented closed geodesics equidistributing in the unit tangent bundle then the associated partial coverings also equidistribute. In the case of the modular group, this yields an alternative proof of an equidistribution result due to Duke, Imamo\={g}lu, and T\'{o}th.
\end{abstract}
\maketitle
\section{Introduction}
The distribution of closed geodesics on hyperbolic surfaces is a classical and rich topic with connections to ergodic theory \cite{Bowen72}, \cite{Li68}, \cite{Skubenko62}, the spectral theory of automorphic forms \cite{Zelditch92}, number theory \cite{Du88}, \cite{HumphriesRadziwill22}, \cite{NordentoftConcentration} and hyperbolic geometry \cite{ErlandssonSouto23}, \cite{ErlandssonSouto22}. In the case of arithmetic quotients the closed geodesics are naturally parametrized by integral indefinite binary quadratic forms and thus in turn by class groups of orders of real quadratic fields \cite{Sarnak82}.  In \cite{DuImTo} Duke--Imamo\={g}lu--T\'{o}th introduced a new geometric invariant, a hyperbolic orbifold $\Gamma_A\backslash \mathcal{N}_A$ with boundary, associated to (narrow) ideal classes $A\in \Cl_K^+$ of a real quadratic field $K$. The boundary of this orbifold is  given by the corresponding closed geodesic $\partial (\Gamma_A\backslash \mathcal{N}_A)=\mathcal{C}_A$ on the modular surface $\PSL_2(\Z)\backslash \Hb$. They furthermore proved that the hyperbolic orbifolds associated to genera of the class groups (i.e.\ cosets of the square subgroups $(\Cl_K^+)^2$) equidistribute when projected to the modular surface as the discriminant of $K$ tends to infinity. Their proof is spectral in nature and relies on relating the Weyl sums to Fourier coefficients of half-integral weight automorphic forms which have been bounded by Duke \cite{Du88}. The construction was generalized to Hecke congruence groups of prime level by Peter Humphries and the first named author \cite{HumphriesNordentoft22} using geometric coding of closed geodesics and certain refined sparse equidistribution questions were investigated. The proofs in \emph{loc.\ cit.}\ rely on an adelic interpretation of the Weyl sums which are then related to central values via Waldspurger's formula \cite{Waldspurger85}. 

In this paper we introduce a new purely geometric construction of the partial coverings associated to closed geodesics alluded to above. The construction generalizes to an arbitrary finite volume hyperbolic surface and our main result (Theorem \ref{thm:mainintro}) is that the partial coverings \emph{cover the hyperbolic surface evenly}, in the sense of eq. (\ref{eq:coverevenly}), if the corresponding oriented closed geodesics \emph{equidistribute in the unit tangent bundle}, in the sense of Definition \ref{def:equid}. In particular, it follows that the equidistribution result of Duke--Imamo\={g}lu--T\'{o}th \cite[Thm. 2]{DuImTo} is a consequence of the equidistribution result of Duke\footnote{More precisely, it follows from the extension to genera of the class group and to the unit tangent bundle as in e.g. \cite{EinLindMichVenk12}.} \cite{Du88} as explained in Remark \ref{rem:DITHumNor} and Section \ref{sec:comparing} below. For an overview of the proof of Theorem \ref{thm:mainintro} we refer to Section \ref{sec:ideaofproof} below.%below relies on the spectral theory of automorphic forms inspired by the method introduced in \cite{DuImTo} combined with new topological and geometric arguments to treat the contribution from the boundary and to lower bounding the volume, see Sections \ref{sec:fundamentalpol} and \ref{sec:lowerbound} respectively.  
%{\color{red} Explain proof in more detail? Kind of nice to do}
%Finally, we mention that our construction extends natural y to the horocyclic flow and to higher dimensional hyperbolic spaces. 

\subsection{Statement of results}
In order to state our results, we will now give an informal description of the construction of \emph{partial coverings} obtained by ``painting to the left'' of oriented closed geodesics relative to a fixed fundamental polygon. We refer to Section \ref{sec:PC} below for details and to Example \ref{ex:triangle} for illustrations of the construction in the case of the $(2,4,6)$ triangle group.           

Let $\Gamma\leq \PSL_2(\R)$ be a discrete and cofinite subgroup. Consider the associated hyperbolic orbifold $Y_\Gamma:=\Gamma\backslash \Hb$ and its unit tangent bundle $\Gamma \backslash \PSL_2(\R)$. Here $\Hb=\{z\in \C: \Im z>0\}$ denotes the hyperbolic plane. Let $\FF\subset \Hb$ be a fundamental polygon for $\Gamma$. Let $\CC$ be an oriented closed geodesic on $Y_\Gamma$ and let $\tilde{\CC}\subset \Hb$ be a lift to the universal cover. Then for each $\Gamma$-translate $\gamma_i \FF$ of $\FF$ which the lift $\tilde{\CC}$ intersects we obtain a hyperbolic polygon $\PP_i$ as the intersection of $\gamma_i \FF$ with the interior (relative to the orientation of $\tilde{\CC}$ coming from $\CC$) of the two-sided infinite geodesic containing $\tilde{\CC}$, see eq. (\ref{eq:PPi}) for a precise definition. This is what we informally think of as ``painting to the left of $\CC$ relative to $\FF$\,'' and is depicted in Figures \ref{fig:1} and \ref{fig:2} for the $(2,4,6)$ triangle group. 

The collection of polygons $\cup_i \PP_i$ defines a ``partial covering'' of $Y_\Gamma$ by considering the image of $\cup_i \PP_i$ under the natural projection  $\pi_\Gamma:\Hb\rightarrow Y_\Gamma$ counted with multiplicities. Formally, we will encode this in the function:
\begin{equation}\PP(\CC,\FF): Y_\Gamma\rightarrow \Z_{\geq 0},\quad x\mapsto \#(\pi_\Gamma^{-1}(\{x\})\cap (\cup_i \PP_i)) ,\end{equation}
which we refer to as the \emph{partial covering of $Y_\Gamma$ associated to $(\CC,\FF)$}. This is depicted in Figure \ref{fig:3}. 
% Now lift $\CC$ to a curve $\tilde{\CC}\subset \Hb$ and record all the intersections with $\Gamma$-translates of $\FF$. This yields for some $m\geq 1$ a decomposition
%$$\tilde{\CC}=\LL_1\ast \cdots \ast \LL_m,$$
%as the concatenation of oriented geodesic segments $\LL_i\subset \Hb$ with endpoints on a single $\Gamma$ translate of $\partial \FF$ (i.e.\ $\LL_i$ is contained in a single $\Gamma$-translate of $\FF$). The oriented segment of the ($\Gamma$-translate of) the boundary $\partial \FF$ connecting the pair of oriented endpoints of $\LL_i$ defines the boundary of hyperbolic polygon $\PP_i$ (make picture). For each $x\in X_\Gamma$ denote by $n_x(\CC,\FF)$ the number of $i\in \{1,\ldots,m\}$ such that the projection of $\PP_i$ to $X_\Gamma$ contains $x$. Formally, this defines an element of the \emph{monoid of partial coverings of $X_\Gamma$}:
%$$\PP(\CC,\FF):=\sum_{x\in X_\Gamma}n_x(\CC,\FF)x\in \mathrm{Partial}(X_\Gamma)=\left\{\sum_{x\in X_\Gamma}n_x x: n_x\in \Z_{\geq 0}\right\},$$
%which we refer to as the \emph{the partial covering of $X_\Gamma$ with respect to $\CC$ and $\FF$}. 
For a collection of oriented closed geodesics $\Cc=\{\CC_1,\ldots, \CC_k\}$ we define the associated partial covering as   
\begin{equation}\PP(\Cc,\FF):=\sum_{i=1}^k \PP(\CC_i,\FF),\end{equation}
with addition defined on the target. Given a sequence of collections of geodesics $(\Cc_n)_{n\geq 1}$ the basic question is if the associated partial coverings ``cover $Y_\Gamma$ evenly'' as $n\rightarrow \infty$, corresponding to the partial coverings $\PP(\Cc_n,\FF)$ thought of as a functions being ``asymptotically constant''. To make this precise, for an integrable function $\phi:Y_\Gamma\rightarrow \C$ we define:
$$\int_{\PP(\Cc_n,\FF)}\phi:= \int_{Y_\Gamma} \phi(z)\PP(\Cc_n,\FF)(z) d\mu(z),  $$
where $d\mu(z)=\tfrac{dxdy}{y^2}$ denotes the hyperbolic measure. We say that 
$$\PP(\Cc_n,\FF) \text{\emph{ covers $Y_\Gamma$ evenly as $n\rightarrow\infty$}},$$ if for any continuous and bounded function $\phi:Y_\Gamma\rightarrow \C$ it holds that 
\begin{equation}\label{eq:coverevenly}\frac{\int_{\PP(\Cc_n,\FF)}\phi}{\vol(\PP(\Cc_n,\FF))}\rightarrow  \frac{\int_{Y_\Gamma} \phi}{\vol(\Gamma)},\quad \text{as }n\rightarrow \infty. \end{equation}
Here $\vol(\PP(\Cc_n,\FF)):=\int_{\PP(\Cc_n,\FF)}1$ and $\vol(\Gamma):=\int_{Y_\Gamma} 1$.
Our main result is the following:

\begin{thm}\label{thm:mainintro}
Let $\FF$ be a fundamental polygon for a discrete and cofinite subgroup $\Gamma\leq \PSL_2(\R)$. Let $\Cc_1,\Cc_2,\ldots$ be a sequence of collections of oriented closed geodesics on $Y_\Gamma$ equidistributing in the unit tangent bundle $\Gamma\backslash \PSL_2(\R)$, in the sense of Definition \ref{def:equid}\footnote{Meaning, converging weakly on the space of continuous and bounded functions to the probability Liouville (or Haar) measure on $\Gamma\backslash \PSL_2(\R)$.}. Then $\PP(\Cc_n,\FF)$ covers $Y_\Gamma$ evenly as $n\rightarrow \infty$.
\end{thm}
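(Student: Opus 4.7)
The strategy is to express $\int_{\PP(\Cc_n,\FF)}\Psi$ as a weighted arc-length integral along $\Cc_n$, apply the equidistribution hypothesis to this weighted integral and to its counterpart for $\Psi\equiv 1$, and evaluate the limit of the ratio using an integral-geometric symmetry.

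Fix first a single oriented closed geodesic $\CC$ with one-period lift $\tilde{\CC}\subset\Hb$. Over this period, $\tilde{\CC}$ traverses finitely many $\Gamma$-translates $\gamma_1\FF,\dots,\gamma_N\FF$; pulling each $\PP_i\subset\gamma_i\FF$ back by $\gamma_i^{-1}$ yields $\FF^{-}(L_i)\subset\FF$, the ``left half'' of $\FF$ with respect to an oriented chord $L_i\subset\FF$ (itself a lifted arc of $\CC$). Unfolding gives
\[
\int_{\PP(\CC,\FF)}\Psi \;=\; \sum_{i=1}^N g_\Psi(L_i),\qquad g_\Psi(L)\,:=\,\int_{\FF^{-}(L)}\Psi\circ\pi_\Gamma\, d\mu.
\]
Define $F_\Psi\colon\Gamma\backslash\PSL_2(\R)\to\C$ almost everywhere by $F_\Psi(v)=g_\Psi(L_v)/|L_v|$, where $L_v$ is the oriented chord of $\FF$ through the $T^1\FF$-lift of $v$ (well-defined by convexity of $\FF$). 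A unit-speed traversal of $\CC$ spends arc length exactly $|L_i|$ on the $i$-th chord, whence
\begin{equation}\label{eq:keyid}
\int_{\CC} F_\Psi\, ds \;=\; \sum_{i=1}^N |L_i|\cdot\frac{g_\Psi(L_i)}{|L_i|} \;=\; \int_{\PP(\CC,\FF)}\Psi.
\end{equation}
Summing over the collection gives $\int_{\Cc_n}F_\Psi\, ds=\int_{\PP(\Cc_n,\FF)}\Psi$, and the same identity with $\Psi\equiv 1$ expresses $\vol(\PP(\Cc_n,\FF))$.

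Granting sufficient regularity of $F_\Psi$ and $F_1$ to invoke the equidistribution hypothesis (see below), the ratio in (\ref{eq:coverevenly}) converges to $\int F_\Psi\, d\nu \big/ \int F_1\, d\nu$, where $\nu$ is the Liouville probability measure on $\Gamma\backslash\PSL_2(\R)$. To evaluate this ratio, apply Santal\'{o}'s formula on $T^1\FF$: letting $dL$ be the Santal\'{o} measure on oriented geodesics meeting $\FF$,
\[
\int_{T^1\FF} F_\Psi\, dv \;=\; \int g_\Psi(L)\, dL \;=\; \int_{\FF}\Psi(\pi_\Gamma(w))\, K(w)\, d\mu(w),\qquad K(w):=\int_{\{L\,:\,w\in\FF^{-}(L)\}} dL.
\]
The orientation-reversing involution $L\mapsto\bar L$ preserves $dL$ and the condition ``$L$ meets $\FF$'' while swapping $\FF^{-}(L)$ with $\FF^{+}(L)$; since almost every oriented geodesic meeting $\FF$ places $w$ strictly on one side, $K(w)=\tfrac12\int dL$ is constant in $w\in\FF$. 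Hence $\int F_\Psi\, d\nu$ is the \emph{same} multiple of $\int_{Y_\Gamma}\Psi\, d\mu$ for every $\Psi$, and the desired ratio is $\int_{Y_\Gamma}\Psi/\vol(\Gamma)$.

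\textbf{Main obstacle.} The weight $F_\Psi$ fails to be continuous on $\Gamma\backslash\PSL_2(\R)$: $|L_v|\to 0$ as the geodesic through $v$ grazes $\partial\FF$, producing a $1/|L_v|$ singularity in $F_\Psi$, and when $Y_\Gamma$ is non-compact there is the usual cusp issue. The equidistribution hypothesis, being weak-$\ast$ against continuous (compactly supported) test functions, does not directly apply. I expect the bulk of the work to be: (i) approximate $F_\Psi$ between bounded continuous truncations adapted to the boundary and cusps of $\FF$; (ii) show, uniformly in $n$, that the $\Cc_n$-arc-length assigned to neighborhoods of the tangency locus $\{|L_v|\ll 1\}$ and of the cusps is negligible---plausibly via integral-geometric/Crofton-type bounds combined with equidistribution on a suitable compact set; (iii) pass to the limit in the truncation. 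Aside from this regularity and non-escape-of-mass step, the argument reduces to the algebraic identity (\ref{eq:keyid}) and the Santal\'{o}/involution computation.
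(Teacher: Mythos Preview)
Your approach is genuinely different from the paper's and in principle more elementary. The paper applies Stokes' theorem to write $\int_{\PP(\Cc_n,\FF)}\Psi$ as a boundary integral of a weight~$2$ form $\Phi$ with $L_2\Phi=\Psi$ (constructed via spectral theory in the cocompact case, Poincar\'e series otherwise), and then splits this boundary into the geodesic period $\int_{\Cc_n}\Phi$ plus ``topological terms'' supported on $\Gamma\backslash\partial\FF$, which it controls via the homology of $X_\Gamma$ and periods of holomorphic weight~$2$ forms. Your chord identity plus the Santal\'o/involution argument bypasses Stokes, spectral theory, and the homological analysis entirely. The paper also proves the volume lower bound $\vol(\PP)\gg|\Cc_n|$ by a separate geometric argument (its Proposition~\ref{prop:vol}); in your framework this drops out of equidistribution applied to $\min(F_1,M)$ together with $F_1\ge 0$.

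That said, step~(ii) of your plan has a real gap. Controlling the arc-length $\mu_{\Cc_n}(\{|L_v|<\epsilon\})=\sum_{|L_i|<\epsilon}|L_i|$ does not control the tail $\sum_{|L_i|<\epsilon}g_\Psi(L_i)$, because $g_\Psi(L)$ does \emph{not} vanish as $|L|\to 0$: when $\FF^-(L)$ is the large piece, $g_\Psi(L)\to\int_\FF\Psi\circ\pi_\Gamma$. You would need to bound the \emph{number} of short chords, not their total length. Worse, in the non-compact case your Santal\'o constant $K=\tfrac12\int dL$ is infinite (the kinematic measure of lines meeting $\FF$ diverges near a cusp), so $\int F_\Psi\,d\nu$ and $\int F_1\,d\nu$ are both infinite and the ratio is meaningless. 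Both problems disappear if you work instead with $H:=F_\Psi-cF_1$, $c=\int_{Y_\Gamma}\Psi/\vol(\Gamma)$. Since $\int_\FF(\Psi\circ\pi_\Gamma-c)\,d\mu=0$, one has $g_\Psi(L)-cg_1(L)=\int_{\FF^-(L)}(\Psi\circ\pi_\Gamma-c)=O\big(\mathrm{vol}(\min(\FF^-(L),\FF^+(L)))\big)$, which is $O(|L|^2)$ at finite vertices and $O(|L|)$ at cuspidal ones; hence $H$ is bounded and a.e.\ continuous on $\Gamma\backslash G$. Moreover $g_\Psi-cg_1$ is odd under $L\mapsto\bar L$, so $\int H\,d\nu=0$. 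Equidistribution applied to $H$ then gives $\int_{\PP}\Psi-c\,\vol(\PP)=o(|\Cc_n|)$, and dividing by the volume (bounded below as above) finishes.
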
 
\begin{remark}
In the co-compact case we also obtain an effective version of the theorem, see Theorem \ref{thm:main}. In the non-compact case there are subtleties to be resolved at the cusps, see Remark \ref{remark:effective} for details. 
\end{remark}
\begin{remark} The notion of a sequence of partial coverings $(\PP_n)_{n\geq 1}$ ``to cover $Y_\Gamma$ evenly'' in the case where $\PP_n$ are coverings obtained as the projection of some (disjoint) polygons $\subset \Hb$ (the union of which we will denote by the same symbol $\PP_n$) amounts to showing for any continuity set $\mathcal{B}\subset \Gamma\backslash\Hb$ that  
\begin{equation}
\label{eqn:equidistribution}
\frac{\vol(\PP_n \cap \Gamma \mathcal{B})}{\vol(\PP_n)} \rightarrow  \frac{\vol(\mathcal{B})}{\vol(\Gamma)},\quad \text{as }n\rightarrow \infty.
\end{equation}
This follows by unraveling the definitions and approximating smooth and compactly supported functions by indicator functions (and visa versa), see Lemma \ref{lem:B}. The statement (\ref{eqn:equidistribution}) is exactly the notion of equidistribution used in both \cite{DuImTo} and \cite{HumphriesNordentoft22}.
\end{remark}
\begin{remark}
In view of  Lemma \ref{lem:DITequiv} our construction of partial coverings specializes to that of Duke--Imamo\={g}lu--T\'{o}th. Thus by combining Theorem \ref{thm:mainintro} with the ergodic proof of equidistribution of oriented closed geodesics due to Einsiedler--Lindenstrauss--Michel--Venkatesh \cite{EinLindMichVenk12} we obtain a ``theta correspondence-free'' proof of the equidistribution results \cite[Thm. 2]{DuImTo}, by which we mean a proof that avoids any use of bounds for $L$-functions and/or Fourier coefficients of half-integral weight automorphic forms. This yields a possible answer to a question \cite[Ques. 10.1]{HumphriesNordentoft22} raised by Peter Humphries and the first named author. See Sections \ref{sec:comparing} and \ref{sec:humnor} as well as  Corollary \ref{cor:itfollows} for more details.
\end{remark}
\subsubsection{Applications} 
By Theorem \ref{thm:mainintro} we have reduced the equidistribution of partial coverings to the equidistribution of oriented closed geodesics in the unit tangent bundle $\Gamma\backslash \PSL_2(\R)$. Using existing results in the literature, this allows us to obtain a variety of new equidistribution results for partial coverings. We refer to Section \ref{sec:appl} for details. We note that, as observed in \cite{DuImTo}, in order to get a non-trivial statement we will need equidistributing collections of oriented closed geodesics which are \emph{not} closed under inverting the orientation of the closed geodesics, see Remark \ref{rmk:trivial} and Lemma \ref{lem:trivial} below for details. We present here four different applications. For a collection $\Cc$ of closed geodesic on $Y_\Gamma$ we denote the total length by:
$$|\Cc|:=\sum_{\CC\in \Cc}|\CC|,$$
where $|\CC|$ denotes the \emph{geodesic length} of $\CC$ as in (\ref{eq:geodesiclength}). First of all, we obtain a result for any discrete and cofinite Fuchsian group using sparse equidistribution via ergodicity of the geodesic flow, see \cite[Thm.\ 4.5]{ConstantinescuNordentoft24}.%  using a result of Constantinescu and the first named author \cite[Thm. 5.8]{ConstantinescuNordentoft24}.   
%Our first applications is an equidistribution result for single length packets of closed geodesics for arithmetic subgroups, taking as input the equidistribution result of .     
%The first is a result for general Fuchsian groups $\Gamma\leq \PSL_2(\R)$ of the first kind.
\begin{cor}\label{cor:intro3}
Fix $\eps>0$. Let $\FF$ be a fundamental polygon for a discrete and cofinite subgroup $\Gamma\leq \PSL_2(\R)$.  Let $L_1<L_2<\ldots$  be a sequence of real numbers such that  $L_n\rightarrow \infty$ as $n\rightarrow \infty$. For each $n\geq 1$, let
$$\Cc_n\subset\{\CC\subset Y_\Gamma: |\CC|\leq L_n\},$$   
be a ``not too thin subset'' of the packet of primitive oriented closed geodesics of length $\leq L_n$, in the sense that 
$$ |\Cc_n|\geq \eps \cdot |\{\CC\subset Y_\Gamma: |\CC|\leq L_n\}|. $$ 
Then $\PP(\Cc_n,\FF)$ covers $Y_\Gamma$ evenly as $n\rightarrow \infty$.
\end{cor}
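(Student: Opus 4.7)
The plan is to deduce Corollary \ref{cor:intro3} directly from Theorem \ref{thm:mainintro}. Since that theorem reduces even covering of $Y_\Gamma$ by $\PP(\Cc_N,\FF)$ to equidistribution of $\Cc_N$ in the unit tangent bundle $\Gamma\backslash \PSL_2(\R)$ in the sense of Definition \ref{def:equid}, the only thing I need to verify is that the ``not too thin'' hypothesis guarantees such equidistribution.

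For this I would invoke \cite[Thm. 5.8]{ConstantinescuNordentoft24}, which provides precisely the required robust equidistribution statement for primitive oriented closed geodesics on an arbitrary discrete and cofinite Fuchsian group: any subcollection of primitive oriented closed geodesics of length $\leq N$ whose cardinality is at least a fixed positive proportion of the full packet equidistributes (as $N\to \infty$) towards the Liouville probability measure on $\Gamma\backslash \PSL_2(\R)$. Concretely, one matches the hypothesis $|\Cc_N|\geq \eps\cdot|\{\CC\subset Y_\Gamma:|\CC|\leq N\}|$ of the corollary with the density condition in the cited theorem, applies the prime geodesic theorem to control the normalising denominator, and then reads off weak$^\ast$ convergence of the associated empirical measures on $\Gamma\backslash \PSL_2(\R)$.

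With this in hand, Theorem \ref{thm:mainintro} applies verbatim to the sequence $(\Cc_N)_{N\geq 1}$ and yields that $\PP(\Cc_N,\FF)$ covers $Y_\Gamma$ evenly as $N\to \infty$, which is the statement of the corollary.

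The genuine mathematical work lies entirely in \cite[Thm. 5.8]{ConstantinescuNordentoft24}: without some quantitative/effective form of equidistribution for the full packet, a positive-density subcollection could a priori concentrate on a small region of the unit tangent bundle and no conclusion could be drawn. Assuming that input, however, the proof of the corollary is a short two-step deduction — verify the density hypothesis, then feed the resulting equidistribution statement into Theorem \ref{thm:mainintro} — and the main expository subtlety is simply to note that the orientation is preserved throughout, so that the non-triviality caveat of Remark \ref{rmk:trivial} does not come into play.
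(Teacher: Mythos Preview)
Your proposal is correct and follows exactly the paper's own route: invoke \cite[Thm.~5.8]{ConstantinescuNordentoft24} (stated in the paper as Theorem \ref{thm:intro3}) to get equidistribution of $\Cc_N$ in $\Gamma\backslash\PSL_2(\R)$, then apply Theorem \ref{thm:mainintro}. One small remark: in the paper's convention $|\Cc_N|=\sum_{\CC\in\Cc_N}|\CC|$ is total geodesic length, not cardinality, so the density hypothesis already matches that of Theorem \ref{thm:intro3} verbatim and no appeal to the prime geodesic theorem is needed.
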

For the modular group we can obtain results for much sparser families of closed geodesics using a result of Einsiedler--Lindenstrauss--Michel--Venkatesh \cite{EinLindMichVenk12}. % equidistribute If we restrict to Hecke congruence groups $\Gamma_0(M)\leq \PSL_2(\R)$ it is known that even sparser families of closed geodesics equidistribute. Given a positive fundamental discriminant $D>0$ such that all prime divisors of $M$ split in $\Q(\sqrt{D})$ one can associate to an ideal class $A\in \Cl^+_D$ of the (narrow) class group of $\Q(\sqrt{D})$ an oriented closed geodesic $\CC_{A}(M)$ on $Y_0(M):=\Gamma_0(M)\backslash \Hb$ of length $2\log \epsilon_D$ with $\epsilon_D>0$ a positive fundamental unit (for details consult e.g. \cite[Section 2]{EinLindMichVenk12}, \cite[Sec. 6]{Popa06}). Firstly, we can consider arbitrary ``not too thin subset'' of these packets of geodesics in the level 1 case. 
%If we restrict to Fuchsian groups $\Gamma_B\leq \PSL_2(\R)$ associated to units of a maximal order in an indefinite quaternion algebras $B/\Q$ (for a precise definition see Section \ref{sec:sparse} below) it is known that even sparser families of closed geodesics equidistribute. Given a positive fundamental discriminant $D>0$ such that all primes where $B$ ramifies one can associate to an ideal class $A\in \Cl_D$ of the (narrow) class group of $\Q(\sqrt{D})$ an oriented closed geodesic $\CC_{A}$ (suppressing the dependence on $B$ in the notation) on $X_B:=X_{\Gamma_B}$ of length $2\log \epsilon_D$ with $\epsilon_D>0$ a positive fundamental unit (for details consult e.g. \cite[Section 2]{EinLindMichVenk12} \cite[Sec. 6]{Popa06}). Firstly, we can consider arbitrary ``not too thin subset'' of these packets of geodesics.
%Let $\Gamma_B\leq \PSL_2(\R)$ be a Fuchsian group associated to the units of a maximal order in an indefinite quaternion algebra $B/\Q$ (i.e.\ either $\PSL_2(\Z)$ or  a co-compact subgroup). Let $\FF$ be a fundamental polygon for $\Gamma_B$. Let $(D_n)_{n\geq 1}$ be a sequence of positive fundamental discriminants such that all primes dividing $D_B$ are inert in $\Q(\sqrt{D_n})$ for all $n\geq 1$. For each $n\geq 1$ let
\begin{cor}\label{cor:intro1}
Let $\FF$ be a fundamental polygon for the modular surface $Y_0(1)=\PSL_2(\Z)\backslash \Hb$. Let $\ell_1<\ell_2<\ldots $ be the primitive length spectrum of $Y_0(1)$.  For each $n\geq 1$ let
$$\Cc_n\subset \{\CC\subset Y_0(1): |\CC|=\ell_n\},
$$ 
be a ``not too thin subset'' of the length packet, in the sense that 
$$ \frac{\log |\Cc_n|}{\log (\sum_{|\CC|=\ell_n }|\CC|))}\rightarrow 1,\quad n\rightarrow \infty. $$ 
Then $\PP(\Cc_n,\FF)$ covers $Y_0(1)$ evenly as $n\rightarrow \infty$.
\end{cor}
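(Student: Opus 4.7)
The plan is to derive the corollary from Theorem \ref{thm:mainintro} by verifying the equidistribution of the collections $\Cc_n$ in the unit tangent bundle $\PSL_2(\Z)\backslash \PSL_2(\R)$, in the sense of Definition \ref{def:equid}, and then reading off the even-covering conclusion from Theorem \ref{thm:mainintro} applied with $\Gamma=\PSL_2(\Z)$. Thus the whole question reduces to an arithmetic equidistribution statement about sparse subpackets of primitive closed geodesics on the modular surface.

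For the arithmetic input I would invoke the joinings-based equidistribution theorem of Einsiedler--Lindenstrauss--Michel--Venkatesh \cite{EinLindMichVenk12}. The classical correspondence is as follows: primitive oriented closed geodesics of length $\ell_n$ on $Y_0(1)$ correspond to primitive hyperbolic $\PSL_2(\Z)$-conjugacy classes of trace $2\cosh(\ell_n/2)$, which via reduction theory are parametrized by narrow ideal classes in orders of real quadratic fields of some discriminant $d_n\asymp e^{\ell_n}$. The full length packet is then (a union of) narrow class groups of cardinality $\asymp d_n^{1/2+o(1)}$, and $\sum_{|\CC|=\ell_n}|\CC|=\ell_n\cdot \#\{\CC:|\CC|=\ell_n\}$. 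The result of \cite{EinLindMichVenk12} asserts that any subcollection of such a packet whose logarithmic density in the full packet tends to $1$ equidistributes in $\PSL_2(\Z)\backslash \PSL_2(\R)$ with respect to the Liouville probability measure. Since the factor $\ell_n$ contributes only an insignificant $\log\ell_n$ to the denominator in the hypothesis, the condition $\log|\Cc_n|/\log(\sum_{|\CC|=\ell_n}|\CC|)\to 1$ matches this input exactly, and the ergodic theorem applies.

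The main obstacle is not the deep equidistribution result from \cite{EinLindMichVenk12}, which I am treating as a black box, but rather the bookkeeping required to match the conventions of \emph{loc. cit.} (closed $A$-orbits carrying arc-length measure, indexed by class-group subsets or genera) with Definition \ref{def:equid} of the present paper. Since all geodesics in a given length packet have the same length $\ell_n$, the two measures differ only by a global normalization constant, so this reconciliation is elementary. A secondary concern is the non-compactness of $Y_0(1)$ and the behaviour of smooth compactly supported test functions near the cusp, but this is exactly the issue flagged in Remark \ref{remark:effective} and is absorbed into the cofinite formulation of Theorem \ref{thm:mainintro}; no additional effort is required beyond citing that case.
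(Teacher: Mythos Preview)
Your proposal is correct and follows essentially the same approach as the paper: the paper's proof simply combines Theorem \ref{thm:main} (equivalently Theorem \ref{thm:mainintro}) with Theorem \ref{thm:intro1}, the latter being precisely the Einsiedler--Lindenstrauss--Michel--Venkatesh equidistribution result from \cite{EinLindMichVenk12} that you invoke. Your additional remarks on the class-group correspondence and the matching of the logarithmic density condition are exactly the content sketched in the paper's proof of Theorem \ref{thm:intro1} (with reference to \cite{AkaEinsiedler16} and \cite{Sarnak82}), so there is no substantive difference.
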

Given a square-free integer $M\geq 1$ and a positive fundamental discriminant $D>0$ such that all prime divisors of $M$ split in $\Q(\sqrt{D})$ one can associate to each (narrow) ideal class $A\in \Cl^+_D$ an oriented closed geodesic $\CC_A(M)$ on the modular surface $Y_0(M):=\Gamma_0(M)\backslash \Hb$ of level $M$ where $\Gamma_0(M)$ denotes Hecke congruence group of level $M$. In this setting we obtain results for very sparse collections of closed geodesics defined via the algebraic structure of the class groups by applying the works of Waldspurger \cite{Waldspurger85}, Popa \cite{Popa06}, and Harcos--Michel \cite{HarcosMichel06}.
\begin{cor}\label{cor:intro2}
Fix $\delta\in [0,\frac{1}{2826})$. Let $\FF$ be a fundamental polygon for $Y_0(M)=\Gamma_0(M)\backslash \Hb$ with $M\geq 1$ square-free. Let $(D_n)_{n\geq 1}$ be a sequence of distinct positive fundamental discriminants such that all prime divisors of $M$ split in $\Q(\sqrt{D_n})$. For each $n\geq 1$ let
$$\Cc_n=\{\CC_{A}(M)\subset Y_0(M): A\in BH_n\},
$$ 
be the collection of primitive oriented closed geodesics on $Y_0(M)$ associated to the coset $BH_n$ of a subgroup $H_n\leq  \Cl_{D_n}^+$ of the narrow class group of $\Q(\sqrt{D_n})$ of index $\leq D_n^\delta$. Then $\PP(\Cc_n,\FF)$ covers $Y_0(M)$ evenly as $n\rightarrow \infty$.
\end{cor}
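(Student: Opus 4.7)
The plan is to invoke Theorem \ref{thm:mainintro}, which reduces the corollary to showing that the sub-packets $\Cc_n = \{\CC_A(M) : A \in BH_n\}$ equidistribute in the unit tangent bundle $\Gamma_0(M)\backslash \PSL_2(\R)$ as $n\to\infty$, in the sense of Definition \ref{def:equid}. Thus the whole problem becomes a Weyl-type equidistribution statement for these sparse Heegner packets on $\mathrm{GL}_2$.

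To verify Weyl's criterion I would test against a complete orthonormal system of automorphic forms $\phi$ on $\Gamma_0(M)\backslash \PSL_2(\R)$ (Maass and holomorphic forms of arbitrary weight together with Eisenstein series) orthogonal to the constants. Using orthogonality of characters of $\Cl^+_{D_n}/H_n$, the associated Weyl sum factors as
\begin{equation*}
W_n(\phi) := \sum_{A\in BH_n} \int_{\CC_A(M)} \phi = \frac{1}{[\Cl^+_{D_n}:H_n]}\sum_{\chi} \overline{\chi(B)}\,W_\chi(\phi),
\end{equation*}
where $\chi$ runs over the at most $D_n^{1/5298}$ characters of $\Cl^+_{D_n}$ trivial on $H_n$ and
\begin{equation*}
W_\chi(\phi) := \sum_{A\in \Cl^+_{D_n}}\chi(A)\int_{\CC_A(M)}\phi
\end{equation*}
is the full twisted toric period. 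The Waldspurger formula \cite{Waldspurger85}, in the explicit form worked out by Popa \cite{Popa06}, now expresses $|W_\chi(\phi)|^2$ as controlled local and archimedean factors times the central value $L(1/2, \phi\times \theta_\chi)$ of the Rankin--Selberg $L$-function attached to $\phi$ and the theta series $\theta_\chi$ of the ring class character $\chi$.

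Inserting the Harcos--Michel subconvex bound \cite{HarcosMichel06} for $L(1/2,\phi\times\theta_\chi)$ in the discriminant aspect yields $|W_\chi(\phi)|\ll D_n^{1/2-\delta/2+o(1)}$ uniformly in $\chi$ for an explicit $\delta>0$. Comparing with the lower bound on the total length of the sub-packet, $\sum_{A\in BH_n}|\CC_A(M)| \gg h^+_{D_n}\log D_n/[\Cl^+_{D_n}:H_n] \gg D_n^{1/2-1/5298-o(1)}$, obtained from the class number formula and Siegel's bound, one sees that the numerical constant $1/5298$ has been calibrated precisely so that the ratio $W_n(\phi)/(\text{total length})$ is a negative power of $D_n$ once Harcos--Michel's explicit exponent is inserted; this gives equidistribution in the unit tangent bundle. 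The main obstacle I anticipate is the Eisenstein (continuous spectrum) contribution together with the behavior at the cusps of $Y_0(M)$: uniformity in the continuous parameter reduces to hybrid subconvex bounds for Dirichlet $L$-functions (well known and much stronger than what is required), but matching this uniformity to the cusp geometry of the partial coverings ultimately requires the effective form of Theorem \ref{thm:mainintro} alluded to in Remark \ref{remark:effective}. A minor but essential auxiliary point is that the split condition on primes dividing $M$ is used precisely so that the local factors in Popa's formula remain benign and contribute only polynomial corrections absorbed in the $o(1)$.
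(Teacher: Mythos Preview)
Your approach is essentially the same as the paper's: reduce via Theorem~\ref{thm:mainintro} to equidistribution of the sub-packets $\Cc_n$ in the unit tangent bundle, and establish the latter by Weyl's criterion combined with the explicit Waldspurger/Popa formula and the Harcos--Michel subconvexity bound (with Wu \cite{Wu22} handling the continuous spectrum). The paper packages this second step as Theorem~\ref{thm:intro2} and then simply cites it.

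One point deserves correction, though. You write that ``matching this uniformity to the cusp geometry of the partial coverings ultimately requires the effective form of Theorem~\ref{thm:mainintro} alluded to in Remark~\ref{remark:effective}.'' This is not so. Theorem~\ref{thm:mainintro} in its \emph{non-effective} form already suffices: once weak$^\ast$ equidistribution of $\Cc_n$ in $\Gamma_0(M)\backslash\PSL_2(\R)$ is established (in the sense of Definition~\ref{def:equid}, i.e.\ against smooth compactly supported test functions), the cusp behaviour of the partial coverings is handled internally by the non-escape of mass argument (Proposition~\ref{prop:nonescape}) in the proof of Theorem~\ref{thm:main}. The Eisenstein/continuous-spectrum issue arises only in the first step --- proving equidistribution of the geodesics themselves --- and is entirely decoupled from the partial-covering side. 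So there is no feedback loop between the two, and no effective input is needed for the corollary as stated.
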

%Note that this group structure on (parts of) the length packets are very special to the arithmetic setting and no analogous group structure is known for general Fuchsian groups of the first kind.  
Finally, following the work of the first named author \cite{Nordentoft23} we obtain results for \emph{$q$-orbits of closed geodesics} on the modular surface, i.e.\ closed geodesics associated to elements of $(\Z/q)^\times$ in analogy with $q$-orbits on low lying horocycles as investigated in \cite[Sec. 2.3]{FouvryKowMich15} (stated here only for level one for simplicity).
\begin{cor}\label{cor:intro4}
Fix $\delta\in [0,\frac{1}{8})$. Let $\FF$ be a fundamental polygon for the modular surface $Y_0(1)=\PSL_2(\Z)\backslash \Hb$. For $q\geq 1$ and $a\in (\Z/q)^\times$ let $\CC_a$ be the oriented closed geodesic associated to the $\PSL_2(\Z)$-conjugacy class of the hyperbolic matrix
$$ \begin{psmallmatrix} a& \frac{ad-1}{q}\\ q& d \end{psmallmatrix}  \in \PSL_2(\Z),$$ 
where $0<a<q$ and $1<d<q+1$ satisfies $ad\equiv 1\modulo q$.
%satisfying 
%$$\gamma_a \infty \equiv \frac{a}{n}\modulo 1,\quad 3\leq \mathrm{tr}(\gamma_a)\leq nM+2.$$ 
%Let $\Gamma_0(M)\leq \PSL_2(\R)$ be the Hecke congruence subgroup of level $M$ and let $\FF$ be a fundamental polygon for $\Gamma_0(M)$. For $n\geq 1$ and $a\in (\Z/nM)^\times$ let $\CC_a$ be the oriented closed geodesic associated to the $\Gamma_0(M)$-conjugacy class of a matrix $\gamma_a \in \Gamma_0(M)$ satisfying 
%$$\gamma_a \infty \equiv \frac{a}{nM}\modulo 1,\quad 3\leq \mathrm{tr}(\gamma_a)\leq nM+2.$$ 
%let $\psi_n: (\Z/nM\Z)^\times \hookrightarrow \Gamma_0(nM)\subset \Gamma_0(M),$ be an embedding of sets satisfying 
%\begin{enumerate}
%\item \label{item:1} $\psi_n(a)\infty\equiv \tfrac{a}{nM}\modulo 1$ (i.e.\ $\psi_n(a)$ is of the shape $\begin{psmallmatrix} a & \ast \\ nM & \ast \end{psmallmatrix}$),
%\item \label{item:2}$|\mathrm{tr}(\psi_n(a))|>2$ (i.e.\ $\psi_n(a)$ is a hyperbolic matrix).
%\end{enumerate}
%Denote by $\CC_n(a)\subset Y_0(M)$ the oriented closed geodesic corresponding to the $\Gamma_0(M)$-conjugacy class of $\psi_n(a)$.
 For each $q\geq 1$ let
$$\Cc_q=\{\CC_a\subset Y_0(1): a\in bH_q\},
$$ 
be the collection of primitive oriented closed geodesics on $Y_0(1)$ associated to a coset $bH_q$ of a subgroup $H_q\leq  (\Z/q)^\times$ of index $\leq q^{\delta}$. Then $\PP(\Cc_q,\FF)$ covers $Y_0(1)$ evenly as $q\rightarrow \infty$.
\end{cor}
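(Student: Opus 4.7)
The plan is to invoke Theorem \ref{thm:mainintro}, so the entire task reduces to showing that the collection $\Cc_q = \{\CC_a : a \in bH_q\}$ equidistributes in the unit tangent bundle $\PSL_2(\Z)\backslash \PSL_2(\R)$ with respect to the normalized Liouville measure. Once this is established, Theorem \ref{thm:mainintro} immediately yields that $\PP(\Cc_q,\FF)$ covers $Y_0(1)$ evenly. By the Weyl criterion, equidistribution amounts to showing that for each fixed element $\varphi$ of a spectral basis of $L^2(\PSL_2(\Z)\backslash \PSL_2(\R))$ orthogonal to the constants (namely Maass forms, holomorphic forms, and Eisenstein series lifted to the unit tangent bundle),
\[
\frac{1}{\sum_{a\in bH_q}|\CC_a|}\sum_{a\in bH_q}\int_{\CC_a}\varphi \longrightarrow 0 \quad \text{as } q\to\infty.
\]

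To detect the coset $bH_q$, I would use orthogonality of characters on $(\Z/q)^\times$: writing $G_q := (\Z/q)^\times/H_q$, the indicator $\mathbf{1}_{bH_q}(a)$ decomposes as $|G_q|^{-1}\sum_{\chi\in\widehat{G_q}}\chi(b)^{-1}\chi(a)$, where the dual group has size $|G_q|\leq q^{1/8-\varepsilon}$. Substituting into the Weyl sum, the contribution of the trivial character gives the average of period integrals over the full $q$-orbit $\{\CC_a : a\in(\Z/q)^\times\}$, whose equidistribution is classical (and in any case follows from the argument for nontrivial $\chi$ applied in the limit). The remaining task is to bound, with a power saving, the twisted Weyl sums
\[
W_\chi(\varphi,q) := \sum_{a\in(\Z/q)^\times}\chi(a)\int_{\CC_a}\varphi
\]
for each nontrivial $\chi\in\widehat{G_q}$; one needs an estimate $|W_\chi(\varphi,q)|\ll_\varphi q^{1-\delta}$ for some absolute $\delta > 1/8$, so that summing over $|G_q|\leq q^{1/8-\varepsilon}$ characters still yields $o(q)$.

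This is precisely where I would appeal to the main results of \cite{Nordentoft23}, which establish such a bound via an adelic interpretation of the twisted Weyl sums, relating them through Waldspurger's formula to central values of $L$-functions of the form $L(1/2,\pi_\varphi\otimes\chi\otimes\eta)$ (with $\eta$ a quadratic character determined by the real quadratic field attached to $\CC_a$), where the required power saving is supplied by hybrid subconvexity bounds in the conductor aspect. The exponent $1/8-\varepsilon$ is exactly the range in which these subconvexity inputs, combined with the pointwise count $|\Cc_q| \gg q^{1-1/8+\varepsilon}$, are strong enough to beat the number of characters.

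The main obstacle is thus not conceptual but quantitative: assembling a uniform subconvex save $\delta$ that genuinely exceeds the index exponent $1/8$. Everything else--the spectral decomposition, the character orthogonality, and the reduction from equidistribution in $Y_0(1)$ to equidistribution of partial coverings--is either classical or handled by Theorem \ref{thm:mainintro}. So the proof of the corollary consists essentially of two lines after quoting the two inputs \cite{Nordentoft23} and Theorem \ref{thm:mainintro}; the substantive content has been pushed into those references.
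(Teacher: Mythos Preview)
Your proposal is correct and matches the paper's approach exactly: the paper's proof is a single sentence combining Theorem \ref{thm:main} (equivalently Theorem \ref{thm:mainintro}) with \cite[Thm.~1.1]{Nordentoft23}, which already supplies the equidistribution of $\Cc_q$ in the unit tangent bundle. Your additional discussion of the Weyl criterion, character decomposition, and subconvexity is an accurate sketch of what goes into \cite{Nordentoft23}, but for the corollary itself none of that needs to be redone---you correctly observe in your final paragraph that the proof is two lines once those inputs are quoted.
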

\begin{remark}\label{rmk:trivial}
If the collections of oriented closed geodesics on $Y_\Gamma$ are closed under inversion in the sense that $\CC\in \Cc_n\Rightarrow \overline{\CC}\in \Cc_n$ (where $\overline{\CC}$ denotes the oriented closed geodesic with the same image but opposite orientation as $\CC$), then in fact $\PP(\Cc_n,\FF)$ is a \emph{perfect} covering of $Y_\Gamma$, see Lemma \ref{lem:trivial}. Thus in this case the conclusion of Theorem \ref{thm:mainintro} is trivial. This is why we in the application above restrict to various subcollections of the different length packets. This phenomena was also present in \cite{DuImTo} where they studied equidistribution for genera instead of the entire class group. 
\end{remark}
\begin{remark}\label{rem:DITHumNor} Applying Corollary \ref{cor:intro2} to respectively, $\PSL_2(\Z)$ with one of the classical fundamental polygons (\ref{eq:Fstd}) and to $\Gamma_0(q)$ for $q$ prime with a \emph{special fundamental polygon} as defined by Kulkarni \cite{Kulkarni91}, one obtains  the equidistribution results by Duke--Imamo\={g}lu--T\'{o}th \cite[Thm. 2]{DuImTo} and by Peter Humphries and the first named author \cite[Thm. 1.3]{HumphriesNordentoft22} respectively, see  Lemmas \ref{lem:DITequiv} and \ref{lem:HuNoequiv} and Corollary \ref{cor:itfollows} below. Note however that the results obtained in the present paper do not say anything about the case where the group $\Gamma$ is varying as in  \cite[Thm. 1.2]{HumphriesNordentoft22}.\end{remark}
\begin{remark}
Using existing techniques one should be able to extend Corollary \ref{cor:intro2} to general arithmetic subgroups of $\PSL_2(\R)$ associated to Eichler orders in indefinite quaternion algebras over $\Q$. This does however not seem to be available in the literature.  
\end{remark}   
\subsection{Idea of proof}\label{sec:ideaofproof} By a standard approximation argument (see Lemma \ref{lem:Zelditch}) we are reduced to proving the convergence (\ref{eq:coverevenly}) for each smooth and compactly supported function $\phi: Y_\Gamma \rightarrow \C$. This requires on the one hand, to control the volume of the partial covering and on the other hand, to handle the integral over $\PP(\Cc_n,\FF)$. A new insight in the present work is that for purely geometric reasons the equidistribution of closed geodesics in the unit tangent bundle implies a lower bound on the volume: 
$$\vol(\PP(\Cc_n,\FF))\gg |\Cc_n|,$$
see Proposition \ref{prop:vol}. The idea being that equidistribution ensures that the geodesics do not spend too much time near the boundary of $\FF$. In \cite[eq. (6.4)]{DuImTo} and \cite[Prop. 3.14]{HumphriesNordentoft22} such a lower bound was achieved for partial coverings of individual geodesics using specific properties of the fundamental polygons considered. The starting point in dealing with the integral in (\ref{eq:coverevenly}), going back to \cite[Lem. 2]{DuImTo}, is that by an application of Stokes' theorem one obtains an integral over the boundary $\partial \PP(\Cc_n,\FF)$. More precisely, if $L_2=1+2iy\partial_{\overline{z}}$ denotes weight $2$ lowering operator (\ref{eq:levelraising}) and we can write $\phi= L_2 \Phi$ for $\Phi$ an automorphic form for $\Gamma$ of weight $2$ then we have
$$ \int_{\PP(\Cc_n,\FF)} \phi=\int_{\partial\PP(\Cc_n,\FF)} \Phi, $$
see Lemma \ref{lem:Stokes}. Using the spectral theory of the Laplacian and Poincar\'{e} series in the compact and non-compact case, respectively, we show that $\phi= L_2 \Phi$ can be solved with good control on the regularity exactly if $\langle \phi,1\rangle=0$ (see Lemmas \ref{lem:antiderivcpt} and \ref{lem:ODE}). Thus in this case we are left with an integral over the boundary which has two contributions: the closed geodesics $\Cc_n$ themselves and the contribution $\LL$ from the boundary $\partial\FF$ of the fundamental polygon $\FF$ which we refer to as the \emph{topological terms}. Now the integral over $\Cc_n$ is $o(|\Cc_n|)$ by the equidistribution assumption. The second key new insight is that the topological terms can be controlled for \emph{any} $\FF$ in terms of the homology of the compactification of  $Y_\Gamma$ (see Section \ref{sec:topology}). Furthermore, $\LL$ is homologous to $-\Cc_n$ and so by the de Rham theorem the topological terms can be related to integrals of weight $2$ holomorphic cusp forms over the closed geodesics $\Cc_n$ which similarly are $o(|\Cc_n|)$ by the equidistribution assumption (as it corresponds to testing with  weight $2$ automorphic forms, see Lemma \ref{lem:weight2}). The above argument yields the convergence (\ref{eq:coverevenly}) when $\langle \phi,1\rangle=0$. When $\Gamma$ is co-compact the general case follows immediately by subtracting the projection to the constant. In the non-compact case we need to furthermore ensure non-escape of mass (since the constant is not of compact support). This is achieved by a geometric argument comparing hyperbolic lengths and volumes, again relying on the assumption of equidistribution (see Proposition \ref{prop:nonescape}).  
%Due to the broad palette of techniques applied, we have tried to keep the exposition as self-contained as possible and to pro.         
%Note that in \cite{DuImTo} the results is stated in terms of oriented closed geodesics associated to narrow class groups of real quadratic fields as the discriminant goes to infinity. The associated closed geodesics all have the same geodesic length which yields the connection with the formulation in Corollary \ref{cor:intro1}.
\section*{Acknowledgements}
The authors would like to thank Peter Humphries, Sanghyun Kim, Nhat Minh Doan, Tien-Cuong Dinh and Johannes Bartenschlager for useful discussions related to the topic of this paper. We would also like to give a special thanks to the referee for his/her comments which raised the level of the paper. The second named author was partially supported by the National University of Singapore academic research grant A-8000989-00-00. 

%In the case where $R=(\Z_{\geq 0},+)$ we think of $\Z_{\geq 0}(X)$ as the set of coverings of $X$. 
%In the special case where $R=\R$ and $X$ is endowed with a $\sigma$-algebra we define the measurable subspace as  
%$$\R(X)^{meas}:=\{f:X\rightarrow \R\text{ measurable}\}.$$
%For $P\in \R(X)^{meas}$, $f:X\rightarrow \C$ and a measure $\mu$ on $X$ we define 
%$$\int_P f:= \int_X f(x) P(x) d\mu(x),$$
%where we suppres $\mu$ in the notation.

\section{Background on automorphic forms}
We refer to \cite[Sec. 4]{DuFrIw02} and \cite{Iw} for a comprehensive introduction to the following material.
%We assume $\sigma_\infty = \mathrm{id}$ (this can always be ensured by conjugating $\Gamma$ by a diagonal element). 
\subsection{Cofinite hyperbolic orbifolds}
The group $G:=\PSL_2(\R)$ acts on hyperbolic space $\Hb:=\{z\in \C: \Im z>0\}$ by hyperbolic isometries via linear fractional transformations 
$$\gamma z:= \frac{az+b}{cz+d},\quad z\in \Hb, \gamma=\begin{pmatrix} a&b\\c&d\end{pmatrix}\in \PSL_2(\R),$$
and this action extends to the boundary $\mathbf{P}^1(\R):=\R\cup\{\infty\}$ by the same formula. We equip $G$ with the Haar measure $\mu_{G}$ normalized so that the push forward of $\mu_{G}$ along $G\rightarrow G/\mathrm{PSO}_2\cong \Hb$ equals the  hyperbolic measure $\frac{dxdy}{y^2}$ in the usual coordinates $z=x+iy\in \Hb$. Let $\Gamma\leq G$ be a discrete subgroup which is cofinite with respect to $\mu_{G}$ and denote by 
$$Y_\Gamma:=\Gamma\backslash \Hb,$$ 
the hyperbolic orbifold associated to $\Gamma$. We denote the canonical projection map by
\begin{equation}\label{eq:piG}\pi_\Gamma: \Hb\rightarrow Y_\Gamma,\quad z\mapsto (z\modulo \Gamma).\end{equation} 
Denote by  $\mathrm{cusp}(\Gamma)\subset \mathbf{P}^1(\R)$ the \emph{cusps} of $\Gamma$ (i.e.\ elements of $\mathbf{P}^1(\R)$ fixed by a parabolic element of $\Gamma$) and for $\mathfrak{a}\in \mathrm{cusp}(\Gamma)$ let $\Gamma_\mathfrak{a}\subset \Gamma$ denote the stabilizer of $\mathfrak{a}$.  %We denote the set of \emph{equivalence classes of cusps} of $\Gamma$ by:
%$$\mathrm{cusp}(\Gamma):=\Gamma\backslash \mathrm{cusp}(\Gamma),$$
For $\mathfrak{a}\in \mathrm{cusp}(\Gamma)$ we pick a \emph{scaling matrix} $\sigma_\mathfrak{a}$ for $\mathfrak{a}$, meaning any matrix which satisfies $(\sigma_\mathfrak{a})^{-1} \Gamma_\mathfrak{a} \sigma_\mathfrak{a} = \{\begin{psmallmatrix} 1 & \Z \\ & 1\end{psmallmatrix} \}$.
We denote the usual compactification of $Y_\Gamma$ by $X_\Gamma=\overline{Y_\Gamma}$ obtained by adjoining the cusps (i.e.\ $\partial Y_\Gamma=X_\Gamma\setminus Y_\Gamma$ is in one-to-one correspondence with the cosets of cusps $\Gamma\backslash \mathrm{cusp}(\Gamma) $). The quotient $Y_\Gamma$ inherits the structure of a Riemannian orbifold from $\Hb$ with length and volume elements
\begin{equation}\label{eq:hyperbolic}ds(z)=\frac{|dz|}{\Im z},\quad d\mu(z)=\frac{dxdy}{(\Im z)^2},\end{equation}
written in terms of the coordinates $z=x+iy\in \Hb$. Given a measurable subset $\BB\subset Y_\Gamma$ we denote its volume by
$$\vol(\BB):=\int_{\BB}1\, d\mu(z),$$
and for brevity we put $ \vol(\Gamma):=\vol(Y_\Gamma)$. Given a curve $\CC\subset X_\Gamma$ we define the \emph{geodesic length} as 
\begin{equation}\label{eq:geodesiclength}|\CC|:=\int_\CC 1\, ds(z), \end{equation}
and recall that for a collection $\Cc$ of closed geodesics we define the \emph{total length} as
\begin{equation*}|\Cc|:=\sum_{\CC\in \Cc}|\CC|. \end{equation*}
We define the \emph{unit tangent bundle} $\mathbf{T}^1(Y_\Gamma)$ of $Y_\Gamma$ to be the homogenous space $\Gamma\backslash G$ (ignoring any orbifold subtleties). Denote the canonical projection from the unit tangent bundle to $Y_\Gamma$ by
\begin{equation}\label{eq:pG}p_\Gamma: \Gamma\backslash G\rightarrow Y_\Gamma,\quad g\mapsto g.i\end{equation}
%Pushing forward the Haar measure along this projection yields
%$$(p_\Gamma)^\ast(\mu_{G})=\frac{1}{\vol(\Gamma)}\mu.$$ 
An oriented geodesic segment in $Y_\Gamma$ lifts canonically to the unit tangent bundle and we will (by slight abuse of notation) denote the lift by the same symbol (e.g. $\CC$). We denote the diagonal subgroup of $G$ by  
$$A:=\{a_t\in G: t\in\R \},\quad a_t:=\begin{psmallmatrix} e^{t/2}&0\\0&e^{-t/2} \end{psmallmatrix}.$$
The right action of $A$ on $\Gamma\backslash G$ generates the geodesic flow and so the lift to the unit tangent bundle of oriented closed geodesics are exactly the compact orbits of $A$. Given an oriented closed geodesic $\CC\subset \Gamma\backslash G$, we equip it with the unique $A$-invariant measure $\mu_\CC$ such that the total mass $\mu_\CC(\CC)=|\CC|$ equals the geodesic length. 
\begin{defi}\label{def:equid}
A sequence of collections of oriented closed geodesics $\Cc_1,\Cc_2, \ldots\subset \Gamma\backslash G$ is said to \emph{equidistribute in the unit tangent bundle} if for every continuous and bounded function $\Psi: \Gamma\backslash G\rightarrow \C$ it holds that 
\begin{equation}\label{eq:geoequid}
\frac{\sum_{\CC\in \Cc_n}\mu_\CC(\Psi)}{\sum_{\CC\in \Cc_n}|\CC|}\rightarrow  \frac{\int_{\Gamma\backslash G} \Psi(g)d\mu_G(g)}{\vol(\Gamma)},\quad \text{as }n\rightarrow \infty. 
\end{equation}
\end{defi}
%Note that the above definition means exactly that the probability measures proportional to $\sum_{\CC\in \Cc_n}\mu_\CC$ converge in the weak$^\ast$ topology to the probability Liouville (or Haar) measure $\frac{1}{\vol(\Gamma)}\mu_G$.  We recall that equidistribution of probability measures to a probability measure automatically implies that one can extend the class of test functions (since there is no escape of mass).
\subsection{Automorphic forms}
Let $k\in 2\Z$ be an even integer. We define a \emph{weight $k$ automorphic form for $\Gamma$} as a smooth map $f:\Hb\rightarrow \C$ satisfying the weight $k$ automorphy relation 
\begin{equation}\label{eq:automorphy} f(\gamma z)=j_\gamma(z)^k f(z),\quad \gamma\in \Gamma,z\in \Hb,\end{equation}
where the automorphy factor is given by
$$ j_\gamma(z):=\frac{cz+d}{|cz+d|} ,\quad \gamma=\begin{pmatrix} a&b\\c&d\end{pmatrix}\in \mathrm{SL}_2(\R).$$

Denote the space of all weight $k$ automorphic forms for $\Gamma$ by $\AA(\Gamma , k)$ equipped with the \emph{Petersson inner-product}
$$\langle f,g\rangle:=\int_{Y_\Gamma}f(z)\overline{g(z)}d\mu(z),\quad f,g\in\AA(\Gamma , k), $$
giving rise to the Hilbert space
$$L^2(\Gamma,k):=\overline{\{f\in \mathcal{A}(\Gamma,k): \langle f,f\rangle<\infty\}}.$$
We will identify elements of $\mathcal{A}(\Gamma,0)$ with the corresponding functions on $\Gamma\backslash \Hb$. We say that $f\in \AA(\Gamma , k)$ is \emph{rapidly decaying} if for all cusps $\mathfrak{a}\in \mathrm{cusp}(\Gamma)$ and $A>0$ it holds that 
\begin{equation}
\label{eq:rapid} f(\sigma_\mathfrak{a} z)\ll_A (\Im z)^{-A},\quad \Im z\rightarrow \infty,
\end{equation}
considered as automatic when $\Gamma$ is co-compact.
Let
\begin{equation}\label{eq:levelraising} R_k := \frac k2 + (z-\bar z) \partial_z,\quad L_k := \frac k2 + (z-\bar z) \partial_{\overline{z}}  \end{equation}
be respectively the weight~$k$ raising and lowering operator, as defined in~\cite[eqs.~(4.3)-(4.4)]{DuFrIw02}, satisfying the following intertwining relations for any smooth map $f:\Hb\rightarrow \C$:
\begin{equation}
\label{eq:intertwine} (R_k f)(\gamma z)j_{\gamma}(z)^{-k-2}=R_k (f(\gamma z)j_\gamma(z)^{-k}),\quad (L_k f)(\gamma z)j_{\gamma}(z)^{-k+2}=L_k (f(\gamma z)j_\gamma(z)^{-k}), 
\end{equation}
and thus defining maps 
$$R_k:\AA(\Gamma , k)\rightarrow \AA(\Gamma , k+2),\quad L_k:\AA(\Gamma , k)\rightarrow \AA(\Gamma , k-2).$$ 
Given an automorphic form $f\in \mathcal{A}(\Gamma,k)$, we define the \emph{lift to the unit tangent bundle} as the map 
$$\tilde{f}:\Gamma\backslash G\rightarrow \C,\quad \PSL_2(\R)\ni g\mapsto j_g(i)^{-k} f(g.i).$$
This map identifies weight $k\in 2\Z$ automorphic forms with elements in the $\chi_{k}$-isotypic component of smooth functions on $\Gamma\backslash G$ with respect to the action of the maximal compact subgroup $K=\mathrm{PSO}_2$, where $\chi_{k}$ denotes the character $\begin{psmallmatrix} \cos\theta & \sin \theta\\ -\sin \theta& \cos \theta\end{psmallmatrix}\mapsto e^{2\pi i \theta k}$.
%$$ K\ni \begin{psmallmatrix} \cos\theta & \sin \theta\\ -\sin \theta& \cos \theta\end{psmallmatrix}\mapsto e^{2\pi i \theta k}.$$
%The lift to the unit tangent bundle intertwines the action of $\Delta_k$ and that of $\Delta$ and so in particular, eigenfunctions are send to eigenfunctions. 
Note that for $k\neq 0$ the lift satisfies 
\begin{equation}\label{eq:kneq0}\int_{\Gamma\backslash G} \tilde{f}(g)d\mu_G(g)=0.\end{equation} 
We recall an alternative description for the geodesic period of weight $2$ automorphic forms.
\begin{lemma}\label{lem:weight2}
Let $\CC$ be an oriented closed geodesic on $\Gamma\backslash G$. Let $f\in \mathcal{A}(\Gamma,2)$ be a weight 2 automorphic form with lift $\tilde{f}$ to the unit tangent bundle. Then it holds that
\begin{equation}
\label{eq:periodweight2} \int_\CC f(z) \tfrac{dz}{\Im z}=i\cdot \mu_\CC(\tilde{f}).
\end{equation}
\end{lemma}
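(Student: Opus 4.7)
The plan is to parametrize the closed geodesic by arc length and identify the integrand $f(z)\tfrac{dz}{\Im z}$ with $i\tilde f$ pulled back along the parametrization. Pick $g_0\in G$ mapping to a point of $\CC\subset \Gamma\backslash G$; then $\CC$ lifts to the $A$-orbit $\{g_0 a_t: 0\le t\le |\CC|\}$ with endpoints identified, and by uniqueness of the $A$-invariant measure of total mass $|\CC|$ one has
\[
\mu_\CC(\tilde f)=\int_0^{|\CC|}\tilde f(g_0 a_t)\,dt.
\]
Set $z(t):=g_0 a_t\cdot i$. Because $a_t\cdot i=e^t i$ and the map $g_0\in G$ is a hyperbolic isometry, $t$ is hyperbolic arc length along $z(t)$, so $\int_\CC f(z)\tfrac{dz}{\Im z}=\int_0^{|\CC|} f(z(t))\,\tfrac{z'(t)}{\Im z(t)}\,dt$. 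Thus the lemma reduces to the pointwise identity $\tfrac{z'(t)}{\Im z(t)}=i\cdot j_{g_0 a_t}(i)^{-2}$, for once this is known, multiplying by $f(z(t))$ produces $i\cdot j_{g_0a_t}(i)^{-2}f(g_0a_t\cdot i)=i\tilde f(g_0 a_t)$, and integrating in $t$ gives the claim.

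To verify this pointwise identity I would compute the derivative of the linear fractional transformation attached to $g=\begin{psmallmatrix}a&b\\c&d\end{psmallmatrix}$, namely $\tfrac{d(g\cdot z)}{dz}=(cz+d)^{-2}$. Hence the upward unit tangent vector $i$ at $i$ pushes forward under $g$ to the tangent vector $i(ci+d)^{-2}$ at $g\cdot i$. Its Euclidean length is $|ci+d|^{-2}=\Im(g\cdot i)$, confirming it is a hyperbolic unit vector, and its direction as a unit complex number is $i\cdot\overline{((ci+d)/|ci+d|)^2}=i\cdot j_g(i)^{-2}$, using $|j_g(i)|=1$. Applied to $g=g_0 a_t$ and compared with the fact that the geodesic flow on $\Gamma\backslash G$ corresponds to moving along $z(t)$ with tangent vector pushed forward from the upward unit vector at $i$, this is exactly $\tfrac{z'(t)}{\Im z(t)}$, as desired.

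The calculation is essentially mechanical; the only delicate point is keeping the conventions straight—specifically, that the identification $G\cong \mathbf{T}^1(\Hb)$ used in the definition of $\tilde f$ sends $g$ to the image of the vector $i$ (not $1$) at $i$. This convention is precisely what produces the factor of $i$ on the right-hand side of \eqref{eq:periodweight2} and matches the tangent direction of the geodesic flow, so everything assembles correctly.
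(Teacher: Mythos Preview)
Your proof is correct and takes essentially the same approach as the paper: both parametrize $\CC$ by the $A$-orbit $t\mapsto g_0 a_t$ and reduce to the pointwise identity $\tfrac{z'(t)}{\Im z(t)}=i\cdot j_{g_0 a_t}(i)^{-2}$. The only difference is that the paper verifies this identity by a brute-force computation with an explicit matrix $g_\gamma$ (its entries written out in terms of those of the hyperbolic element $\gamma$), whereas your chain-rule argument is coordinate-free and a bit cleaner.
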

\begin{proof}
By conjugation we may assume that $\Gamma$ intersects the diagonal subgroup $A$ trivially. For $\gamma=\begin{psmallmatrix} a&b\\ c& d\end{psmallmatrix}\in \PSL_2(\R)$ hyperbolic with $c\neq 0$ we put 
\begin{align*}
\epsilon_\gamma:=\frac{c(a+d)}{|c(a+d)|}\in \{\pm 1\},\quad 
  r_\gamma:=\sqrt{(\tfrac{a+d}{2c})^2-c^{-2}},\\
   g_\gamma:=(2r_\gamma)^{-1/2}
\begin{pmatrix} \frac{a-d}{2c}+\epsilon_\gamma r_\gamma &\epsilon_\gamma\frac{a-d}{2c}-r_\gamma\\ 1 & \epsilon_\gamma\end{pmatrix}.\end{align*}
If $\CC$ corresponds to the conjugacy class of $\gamma\in \Gamma$ we have the following standard parametrization:
\begin{equation}\label{eq:parametrization} \CC=\{g_\gamma a_t : 0\leq t\leq |\CC| \}\subset \Gamma\backslash G, \end{equation}
which follows from the fact that $\gamma g_\gamma a_t=g_\gamma a_{t+|\CC|}$.
%\begin{equation}\label{eq:epsgamma}\epsilon_\gamma:=\frac{c(a+d)}{|c(a+d)|}\in \{\pm 1\},\qquad r_\gamma:=\sqrt{(\tfrac{a+d}{2c})^2-c^{-2}},\end{equation}
%(note that $r_\gamma$ is the radius of $S_\gamma$) and 
%\begin{equation}\label{eq:ggamma}g_\gamma:=\tfrac{1}{\left(2c \sqrt{(\tau_\gamma)^2-4}\right)^{1/2}}\begin{pmatrix} a-d+\epsilon_\gamma\sqrt{(\tau_\gamma)  ^2-4} &\epsilon_\gamma(a-d)-\sqrt{(\tau_\gamma)^2-4}\\ 2c & \epsilon_\gamma2c\end{pmatrix}\in \PSL_2(\R).\end{equation}
%\begin{equation}\label{eq:ggamma}g_\gamma:=(2r_\gamma)^{-1/2}
%\begin{pmatrix} \frac{a-d}{2c}+\epsilon_\gamma r_\gamma &\epsilon_\gamma\frac{a-d}{2c}-r_\gamma\\ 1 & \epsilon_\gamma\end{pmatrix}\in \PSL_2(\R).
%\end{equation}
One now verifies by direct computation that
\begin{align*}
j_{g_\gamma a_t}(i)= \frac{\epsilon_\gamma+ie^t}{\sqrt{e^{2t}+1}},\quad  \Im (g_\gamma a_ti)= \frac{2r_\gamma}{e^t+e^{-t}}, \\
 \frac{d(g_\gamma a_t i)}{dt}=\frac{2r_\gamma e^t(2\epsilon_\gamma e^t+i(1-e^{2t}))}{(e^{2t}+1)^2}.
\end{align*}
This way we see that 
\begin{equation*}
\int_\CC f(z)\tfrac{dz}{\Im z}=\int_0^{|\CC|} f(g_\gamma a_t i) \frac{2r_\gamma e^t(2\epsilon_\gamma e^t+i(1-e^{2t}))}{(e^{2t}+1)^2( \frac{2r_\gamma}{e^t+e^{-t}})} dt = i\int_0^{|\CC|} j_{g_\gamma a_t}(i)^{-2}f(g_\gamma a_t i) dt.
\end{equation*}
By the definition of the lift $\tilde{f}$ and the parametrization of the closed geodesic (\ref{eq:parametrization}) we see that the right hand side above indeed equals $i\cdot \mu_\CC(\tilde{f})$.
\end{proof}

\subsubsection{Some spectral theory (compact case)}
The (weight $0$) hyperbolic Laplacian is defined by
\begin{equation}\label{eq:Delta}\Delta :=-L_{2} R_{0}=-R_{-2}L_0= -y^2\Big(\frac{\partial^2}{\partial x^2} + \frac{\partial^2}{\partial y^2}\Big), \end{equation}
which is symmetric and non-negative with respect to the Petersson innerproduct. The Laplacian $\Delta $ defines an unbounded operator on the Hilbert space $L^2(\Gamma,0)$ with domain given by bounded automorphic forms $f$ such that $\Delta f$ is also bounded. We say that $f\in \AA(\Gamma,0)$ is a \emph{ Maa{\ss} form for $\Gamma$} if $f$ is an eigenfunction for $\Delta$ and we denote the eigenvalue by $\lambda_f$. 

Now assume that $\Gamma$ is co-compact. In this case $\Delta$ has pure point spectrum and there exists an orthonormal basis $B(\Gamma,0)$ of $L^2(\Gamma,0)$ consisting of Maa{\ss} forms. Furthermore, the following spectral expansion converges in the uniform topology for $\Psi\in \mathcal{A}(\Gamma,0)$:
 \begin{align}\label{eq:spectralcompact}
\Psi= \sum_{f\in B(\Gamma,0)} \langle \Psi, f\rangle f, 
\end{align}
where the uniform convergence follows from the arguments in e.g. \cite[Sec. 4]{Iw}. We will also need the standard fact that for any (fixed) $\sigma>5/4$ the following sum converges absolutely and uniformly for $z\in Y_\Gamma$: 
 \begin{align}\label{eq:spectralbound}
\sum_{f\in B(\Gamma,0): \lambda_f>0} \frac{f(z)}{(\lambda_f)^\sigma}, 
\end{align}
which follows from the $L^\infty$-bound $\supp_{z\in Y_\Gamma} |f(z)|\ll (\lambda_f)^{1/4}$ due to Seeger--Sogge \cite[Cor.\ 2.2]{SeegerSogge} and an upper bound towards the Weyl law as in \cite[eq.\ (7.11)]{Iw}.
  %\begin{align}\label{eq:Weyllaw}
%\#\{ f\in B(\Gamma,0): \lambda_f\leq T\}\sim c_\Gamma T,\quad T\rightarrow \infty. 
%\end{align}
%and that $\lambda_f\geq 0$ with equality exactly iff $f$ is constant (see e.g. \cite[Thm. 3.32]{bergeron}). We also have the following standard sup norm bound
%\begin{equation}
%\label{eq:supnorm} |\!|f|\!|_\infty \ll_\Gamma \lambda_f^{1/2}.
%\end{equation}
 Finally, we note that for any smooth function $\Psi:Y_\Gamma\rightarrow \C$ of compact support, $f\in B(\Gamma,0)$ a non-constant $L^2$-normalized Maa{\ss} form and any $N\geq 1$:
  \begin{align}\label{eq:boundIP}
\langle \Psi, f\rangle=\frac{1}{\lambda_f^N}\langle \Psi, \Delta^N f\rangle=\frac{1}{\lambda_f^N}\langle \Delta^N \Psi,  f\rangle\leq  \frac{1}{\lambda_f^N} \langle \Delta^N \Psi,  \Delta^N \Psi\rangle^{1/2}\langle f,  f\rangle^{1/2} \ll_{N,\Psi} \lambda_f^{-N}. 
\end{align}

\subsubsection{Incomplete Poincar\'{e} series} In the non-compact case it seems hard to apply the spectral theorem directly due to the existence of continuous spectrum. Instead we take advantage of the presence of a cusp as they give us access to a rich class of automorphic forms, namely \emph{Poincar\'{e} series}. After conjugating, we may assume that $\Gamma$ has cusp at infinity of width one and put $\Gamma_\infty:=\{\begin{psmallmatrix} 1 & \Z\\ 0 & 1\end{psmallmatrix}\}\leq \PSL_2(\R)$. It will suffice for us to consider only Poincar\'{e} series at infinity. 

For $m\in \Z$, $k\in 2\Z$ and $\psi:\R_{>0}\rightarrow \C$ smooth and rapidly decaying at $0$ and $\infty$ (i.e.\ $|\psi(y)|\ll_A \min(y^{A},y^{-A})$ for all $A>0$) we define the associated \emph{incomplete weight $k$ Poincar\'{e} series} as  
\begin{equation}\label{eq:Poincare}P_{m,k}(z|\psi):=\sum_{\gamma\in \Gamma_\infty\backslash \Gamma} \psi(\Im \gamma z)e(m\Re \gamma z) j_{\gamma}(z)^{-k}.\end{equation}
In the weight $0$ case we simply write $P_{m}(z|\psi):=P_{m,0}(z|\psi)$ which we consider as a function $Y_\Gamma\rightarrow \C$ in the obvious way. By the assumptions on $\psi$ the Poincar\'{e} series defines an element of $\mathcal{A}(\Gamma,k)$ rapidly decaying at all cusps of $\Gamma$. For completeness we provide a proof of this fact as well as a useful bound on the uniform norm.
\begin{lemma}
Let $\Gamma\leq \PSL_2(\R)$ be a discrete and cofinite subgroup with a cusp at infinity of width one. Let $m\in \Z$, $k\in 2\Z$ and $\psi: \R_{>0}\rightarrow \C$ a smooth function rapidly decaying at $0$ and $\infty$. Then the associated incomplete Poincar\'{e} series $P_{m,k}(z|\psi )$ is an automorphic form for $\Gamma$ of weight $k$ which is rapidly decaying at all cusps of $\Gamma$. Furthermore, if $\psi\in C_c^\infty(\R_{>0})$ has support contained in a (fixed) compact interval $I\subset \R_{>0}$ then
\begin{equation}
\label{eq:supnormPoincare} |\!| P_{m,k}(\cdot|\psi)|\!|_\infty\ll_{I,\Gamma} |\!| \psi|\!|_\infty.
\end{equation}  
\end{lemma}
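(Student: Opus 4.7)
The plan is to verify, in order: well-definedness on $\Gamma_\infty$-cosets together with weight $k$ automorphy, absolute convergence and smoothness, rapid decay at each cusp of $\Gamma$, and finally the uniform bound (\ref{eq:supnormPoincare}).

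For well-definedness and automorphy, I would check that for $\gamma_0 = \begin{psmallmatrix} 1 & n \\ 0 & 1 \end{psmallmatrix} \in \Gamma_\infty$ one has $\Im \gamma_0 \gamma z = \Im \gamma z$, $e(m \Re \gamma_0 \gamma z) = e(m \Re \gamma z)$ (since $m, n \in \Z$), and $j_{\gamma_0 \gamma}(z) = j_\gamma(z)$ via the cocycle identity $j_{\gamma_1 \gamma_2}(z) = j_{\gamma_1}(\gamma_2 z) j_{\gamma_2}(z)$, so the summand is $\Gamma_\infty$-invariant on the left; the substitution $\gamma \mapsto \gamma \delta^{-1}$ combined with the same cocycle identity then yields $P_{m,k}(\delta z|\psi) = j_\delta(z)^k P_{m,k}(z|\psi)$ for $\delta \in \Gamma$. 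For convergence, since $|j_\gamma(z)^{-k}| = |e(\cdot)| = 1$ and the cosets in $\Gamma_\infty \backslash \Gamma$ are parametrized by their bottom rows $(c,d)$ with $\Im \gamma z = \Im z/|cz+d|^2$, the rapid decay $|\psi(y)| \ll_A y^A$ of $\psi$ at $0$ reduces the tail to $\sum_{(c,d):\, c \ne 0} (\Im z)^A |cz+d|^{-2A}$, which converges uniformly on compacta for $A > 1$ by the standard Eisenstein estimate. Termwise differentiation (with a slightly larger $A$) gives smoothness, so $P_{m,k}(\cdot|\psi) \in \AA(\Gamma,k)$.

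For rapid decay at a cusp $\mathfrak{a}$, write $\gamma \sigma_\mathfrak{a} = \begin{psmallmatrix} A & B \\ C & D \end{psmallmatrix}$, so $\Im \gamma \sigma_\mathfrak{a} z = \Im z/|Cz+D|^2$. The case $C = 0$ forces $\gamma \mathfrak{a} = \infty$ and therefore produces at most one coset, occurring only if $\mathfrak{a}$ is $\Gamma$-equivalent to $\infty$; its contribution is of size $\psi(\mathrm{const}\cdot \Im z)$, rapidly decaying by the decay of $\psi$ at $\infty$. For $C \ne 0$, the bound $|Cz + D|^2 \ge \tfrac12 (C^2 (\Im z)^2 + (C \Re z + D)^2)$ followed by integration over $D$ and summation over $C$ gives $\sum_{C \ne 0, D}(\Im z)^A |Cz+D|^{-2A} \ll (\Im z)^{1-A}$ for $A > 1$, which yields the desired rapid decay upon taking $A$ arbitrary.

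For the uniform bound, under $\supp \psi \subset I = [a, b] \subset (0,\infty)$, only cosets with $\Im \gamma z \in I$ contribute, so it suffices to bound $N(z) := \#\{\gamma \in \Gamma_\infty \backslash \Gamma : \Im \gamma z \in I\}$ uniformly in $z$. Since $|P_{m,k}(\cdot|\psi)|$ is $\Gamma$-invariant, one may take $z$ in a Siegel-type fundamental domain $\FF$. On the compact bulk of $\FF$, discreteness of the lattice of bottom rows gives a uniform bound. Near each cusp one applies the associated scaling matrix to reduce to $\Im z$ large, at which point the chain of inequalities $c^2 (\Im z)^2 \le |cz+d|^2 \le \Im z / a$ forces $c = 0$ once $\Im z$ exceeds $1/a$, leaving at most one contributing coset. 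This establishes the uniform bound $N_I$ and hence (\ref{eq:supnormPoincare}). The main obstacle is precisely this uniform counting step, since a priori the $\Gamma$-orbit of $z$ in $\Gamma_\infty \backslash \Hb$ could accumulate differently near different cusps; the cusp-by-cusp reduction via scaling matrices is what resolves the issue.
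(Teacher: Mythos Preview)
Your proof is correct and follows essentially the same route as the paper's. Both arguments parametrize $\Gamma_\infty\backslash\Gamma$ by bottom rows, separate the $C=0$ contribution (at most one coset, handled by the decay of $\psi$ at $\infty$) from the $C\neq 0$ terms (handled by the decay at $0$ together with a lattice-type sum), and deduce the uniform bound from the observation that compact support of $\psi$ forces only finitely many cosets to contribute.

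The differences are organizational. The paper packages the $C\neq 0$ sum via the double coset decomposition $\Gamma_\infty\backslash\Gamma\sigma_\mathfrak{a}/\Gamma_\infty$ (as in \cite[eq.~(3.17)]{Iw}) and then explicitly invokes the standard counting bound $\#\{(c,d\bmod c):c\le T\}\ll_\Gamma T^2$ from \cite{PeRi}; this is the rigorous replacement for your ``integration over $D$ and summation over $C$'' when $\Gamma$ is a general cofinite Fuchsian group and the bottom rows do not form an honest lattice, so your estimate $\sum_{C\neq 0,D}(\Im z)^A|Cz+D|^{-2A}\ll(\Im z)^{1-A}$ implicitly relies on it. Conversely, your treatment of (\ref{eq:supnormPoincare}) is more explicit than the paper's one-line remark that ``only a finite number of terms contribute'': you split a fundamental domain into a compact bulk (proper discontinuity) and cuspidal zones (where $c^2(\Im z)^2\le \Im z/a$ eventually forces $c=0$), which is exactly what is needed to make that remark uniform in $z$.
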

\begin{proof} Let $\mathfrak{a}$ be any cusp of $\Gamma$ with scaling matrix $\sigma_\mathfrak{a}$. Rewriting as in \cite[eq.\ (3.17)]{Iw} in terms of the double coset decomposition \cite[Thm.\ 2.7]{Iw} and using that $\psi$ is rapidly decaying we see that the sum defining $P_{m,k}(\sigma_\mathfrak{a}z|\psi )$ is absolutely bounded by
\begin{align}
\nonumber 
\sum_{\gamma \in \Gamma_\infty\backslash \Gamma\sigma_\mathfrak{a}} |\psi(\Im (\gamma z))| &=\delta_{\mathfrak{a},\infty} |\psi(y)|+\sum_{\substack{c>0, d \mod c:\\ \begin{psmallmatrix}\ast & \ast \\ c& d \end{psmallmatrix}\in \Gamma\sigma_\mathfrak{a}}}\,\, \sum_{m\in \Z}\left|\psi\left(\frac{y}{|c(z+m)+d|^2}\right)\right|\\
\nonumber&\ll_A \delta_{\mathfrak{a},\infty} |\psi(y)|+\sum_{\substack{c>0, d \mod c:\\ \begin{psmallmatrix}\ast & \ast \\ c& d \end{psmallmatrix}\in \Gamma\sigma_\mathfrak{a}}}\,\, \sum_{m\in \Z}\frac{y^A}{|c(z+m)+d|^{2A}},\end{align}
where $\delta_{\mathfrak{a},\infty}=1$ if $\mathfrak{a}=\infty$ and $\delta_{\mathfrak{a},\infty}=0$ otherwise and $A\geq 1$. By using the rapid decay and bounding the sum over $m\in \Z$ by an integral over $(-\infty,\infty)$ we can bound the above by 
%\ll_A y^{-A} + y^{-A+1} \sum_{\substack{c>0, d \mod c:\\ \begin{psmallmatrix}\ast & \ast \\ c& d \end{psmallmatrix}\in \Gamma\sigma_\mathfrak{a}}}c^{-2A }\sum_{\gamma \in \Gamma_\infty\backslash \Gamma} |\psi(\Im (\gamma \sigma_\mathfrak{a} z))|\\
\begin{align}\nonumber&\ll_A y^{-A} + y^{A} \sum_{\substack{c>0, d \mod c:\\ \begin{psmallmatrix}\ast & \ast \\ c& d \end{psmallmatrix}\in \Gamma\sigma_\mathfrak{a}}} \int_{-\infty}^\infty \frac{1}{((c(x+t)+d)^2+c^2y^2)^{A}} dt\\
\nonumber&\ll_A y^{-A} + y^{-A+1} \int_{-\infty}^\infty \frac{1}{(t^2+1)^{A}} dt\sum_{\substack{c>0, d \mod c:\\ \begin{psmallmatrix}\ast & \ast \\ c& d \end{psmallmatrix}\in \Gamma\sigma_\mathfrak{a}}}c^{-2A }\\
\label{eq:poincareestimate} &\ll_A y^{-A} + y^{-A+1} \sum_{\substack{c>0, d \mod c:\\ \begin{psmallmatrix}\ast & \ast \\ c& d \end{psmallmatrix}\in \Gamma\sigma_\mathfrak{a}}}c^{-2A }, 
\end{align}
as $y\rightarrow \infty$ (corresponding to $\sigma_\mathfrak{a}z\rightarrow \mathfrak{a}$). By the following standard bound, see e.g. \cite[eq. (3.6)]{PeRi}: 
$$|\{(c,d \mod c): \begin{psmallmatrix}\ast & \ast \\ c& d \end{psmallmatrix}\in \Gamma\sigma_\mathfrak{a}, 0<c\leq T\}|\ll_\Gamma T^2,\quad T>0,$$
we conclude the wanted decay at the cusp $\mathfrak{a}$. One checks directly that the Poincar\'{e} series transforms as a weight $k$ automorphic form for $\Gamma$ as it is defined via averaging. If $\psi$ has compact support then only a finite number of terms contribute in (\ref{eq:Poincare}) yielding the bound (\ref{eq:supnormPoincare}) by the above.
\end{proof}
It is well-known that incomplete Poincar\'{e} series are dense in the Hilbert space $L^2(\Gamma,k)$ with respect to the norm topology (see e.g. \cite[Sec. 4]{LuoSarnak95}). Furthermore, it is a well established fact that if we restrict to weight $k=0$ automorphic forms of compact support
$$ C_c^\infty(Y_\Gamma):=\{f:Y_\Gamma\rightarrow \C: \text{smooth and compactly supported}\},$$
then incomplete Poincar\'{e} series are dense in the uniform topology (see \cite[Prop. 2.3]{JaasaariLesterSaha23} for a nice treatment). For our purposes we furthermore need to control the projection to the constant function $1\in L^2(\Gamma,0)$ and so for completeness we indicate the necessary changes. 
\begin{lemma}\label{lem:Poincare}
Fix a discrete and cofinite subgroup $\Gamma\leq \PSL_2(\R)$ with a cusp at $\infty$ of width $1$. Let $\Psi\in C_c^\infty(Y_\Gamma)$. Then there exists $(\psi_m)_{m\in \Z}\subset C_c^\infty(\R_{>0})$ such that 
\begin{equation}\label{eq:approxPoincare}\Psi=\sum_{m\in \Z}P_{m}(\cdot|\psi_m),\end{equation}
with the sum converging in the uniform norm. Furthermore, if $\langle \Psi,1 \rangle=0$ then $\langle P_{m}(\cdot|\psi_m),1 \rangle=0$ holds for all $m\in \Z$.
\end{lemma}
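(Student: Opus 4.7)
The strategy is the classical unfolding of an incomplete Poincar\'{e} series, combined with Fourier expansion along the cusp at infinity. First I would produce $f \in C_c^\infty(\Hb)$ satisfying $\Psi(z) = \sum_{\gamma \in \Gamma} f(\gamma z)$. Concretely, pick $\phi \in C_c^\infty(\Hb)$ with $\phi \geq 0$ and $\phi$ strictly positive on a compact lift of a neighborhood of $\mathrm{supp}(\Psi) \subset Y_\Gamma$, and set
\begin{equation*}
f(z) := \frac{\Psi(z)\,\phi(z)}{\sum_{\gamma \in \Gamma} \phi(\gamma z)},
\end{equation*}
extended by $0$ outside $\mathrm{supp}(\phi)$. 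Then $f \in C_c^\infty(\Hb)$ and $\sum_{\gamma \in \Gamma} f(\gamma z) = \Psi(z)$. Regrouping by the cuspidal stabilizer $\Gamma_\infty$, the function
\begin{equation*}
h(z) := \sum_{\delta \in \Gamma_\infty} f(\delta z) = \sum_{n \in \Z} f(z+n)
\end{equation*}
is smooth, $\Z$-periodic in $\Re z$, supported in a horizontal strip $\{y_1 \leq \Im z \leq y_2\}$ inherited from $\mathrm{supp}(f)$ (with $0 < y_1 < y_2 < \infty$), and satisfies $\Psi(z) = \sum_{\gamma \in \Gamma_\infty \backslash \Gamma} h(\gamma z)$.

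Next, Fourier-expanding $h$ in the $x$-variable yields $h(z) = \sum_{m \in \Z} \psi_m(y)\, e(mx)$ with
\begin{equation*}
\psi_m(y) := \int_0^1 h(x+iy)\,e(-mx)\,dx \in C_c^\infty(\R_{>0}),
\end{equation*}
each supported in $[y_1, y_2]$. Substituting into the previous identity gives, formally,
\begin{equation*}
\Psi(z) = \sum_{\gamma \in \Gamma_\infty \backslash \Gamma}\sum_{m \in \Z} \psi_m(\Im \gamma z)\,e(m \Re \gamma z) = \sum_{m \in \Z} P_m(z|\psi_m).
\end{equation*}
The outer sum over $\gamma$ is effectively finite at each fixed $z$, because $h(\gamma z) = 0$ unless $\Im(\gamma z) \in [y_1,y_2]$ and only finitely many cosets in $\Gamma_\infty \backslash \Gamma$ send $z$ into any given horizontal strip of bounded height (cf.\ \cite[Cor.~2.12]{Iw}); this justifies the interchange. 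For uniform convergence of the outer sum over $m$, integration by parts $N$ times in the $x$-variable (using the $\Z$-periodicity of $h$) yields $\|\psi_m\|_\infty \ll_{N,h} |m|^{-N}$ for every $N \geq 0$, which combined with the estimate (\ref{eq:supnormPoincare}) gives $\|P_m(\cdot|\psi_m)\|_\infty \ll_{N,h} |m|^{-N}$ and hence absolute uniform convergence.

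For the final clause, unfolding yields
\begin{equation*}
\langle P_m(\cdot|\psi_m), 1 \rangle = \int_{\Gamma_\infty \backslash \Hb} \psi_m(y)\,e(mx)\,\frac{dx\,dy}{y^2} = \Bigl(\int_0^1 e(mx)\,dx\Bigr)\int_0^\infty \psi_m(y)\,\frac{dy}{y^2},
\end{equation*}
which vanishes automatically for $m \neq 0$. The decay $\|\psi_m\|_\infty \ll_N |m|^{-N}$ together with the common compact support of the $\psi_m$ makes $\sum_m |\langle P_m(\cdot|\psi_m), 1\rangle|$ finite, so the uniform convergence permits term-by-term integration to give
\begin{equation*}
\langle \Psi, 1 \rangle = \sum_{m \in \Z} \langle P_m(\cdot|\psi_m), 1\rangle = \langle P_0(\cdot|\psi_0), 1 \rangle;
\end{equation*}
hence $\langle P_0(\cdot|\psi_0), 1\rangle = 0$ under the hypothesis $\langle \Psi, 1\rangle = 0$. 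I expect the only delicate step to be the construction of $f$ in the first paragraph, where one must ensure smoothness through the potential orbifold singularities of $Y_\Gamma$ and strict positivity of the denominator $\sum_\gamma \phi(\gamma\cdot)$ on a neighborhood of $\mathrm{supp}(\Psi)$; the remaining analysis is routine Fourier analysis together with the Poincar\'{e} series estimate (\ref{eq:supnormPoincare}).
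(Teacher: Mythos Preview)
Your proof is correct and follows the same route as the paper: lift $\Psi$ to a compactly supported function on $\Hb$, periodize over $\Gamma_\infty$, Fourier expand, and invoke the sup-norm estimate \eqref{eq:supnormPoincare}. The only difference is in how the lift is built---the paper uses a partition of unity with explicit bookkeeping at elliptic fixed points (the factors $1/\#\Gamma_{z_i}$), whereas your single quotient $f=\Psi\phi/\sum_\gamma \phi(\gamma\cdot)$ handles elliptic points automatically since all the functions involved are already smooth on $\Hb$; your closing concern about orbifold singularities is therefore moot.
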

\begin{proof}
This follows from the arguments in the proof of \cite[Prop. 2.3]{JaasaariLesterSaha23} and we simply indicate the necessary changes in our setting referring to \emph{loc. cit.} for further details. Let $\BB(\Psi)\subset \Hb$ be a compact subset such that $\supp(\Psi)\subset\pi_\Gamma(\BB(\Psi))$. For each point $z\in \BB(\Psi)$ let $z\in \FF(z)\subset \Hb$ be a fundamental domain for $\Gamma$. For $z$ which is not fixed by an element of $\Gamma$, let $z\in \mathcal{U}(z)\subset \Hb$ be a domain such that $\overline{\mathcal{U}(z)}\subset \FF(z)$. Note that $\pi_\Gamma$ restrict to a diffeomorphism of $\mathcal{U}(z)$ onto its image. If $z\in \BB(\Psi)$ has a non-trivial stabilizer $\Gamma_z\subset \Gamma$, then we instead pick $ \mathcal{U}(z)\ni z$ such that $\mathcal{U}(z)=\cup_{\sigma\in \Gamma_z}\sigma( \mathcal{U}(z)\cap \overline{\FF(z)})$. By compactness we can find a finite set of points $z_1,\ldots ,z_n\in \BB(\Psi)$ such that $\BB(\Psi)\subset \cup_{i=1}^n \mathcal{U}(z_i)$. Now we pick a partition of unity of $\supp(\Psi)$ subordinate to the cover $\pi_\Gamma(\mathcal{U}(z_i))$. This way we can write 
\begin{equation}\label{eq:Psii}\Psi=\sum_{i=1}^n \Psi_i,\end{equation}
with $\Psi_i$ smooth and supported on $\pi_\Gamma(\mathcal{U}(z_i))$. Let $\tilde{\Psi}_i:\Hb\rightarrow \C$ be the smooth function of compact support which equals $\Psi_i$ on $\mathcal{U}(z_i)$ and is zero outside. Now we extend $\tilde{\Psi}_i$ to a smooth periodic function and Fourier expand using Poisson summation:
\begin{align}\label{eq:1periodic}\sum_{\sigma\in \Gamma_\infty} \tilde{\Psi}_i(\sigma z)=\sum_{m\in \Z} \psi_{i,m}(y)e(mx).\end{align}
Smoothness and compact support of $\tilde{\Psi}_i$ imply that $\psi_{i,m}$ is of compact support contained in a fixed interval and  satisfies $|\!|\psi_{i,m}|\!|_\infty \ll_A (1+|m|)^{-A} $ for all $A>0$ by partial integration (in $x$). This yields the same bound for $|\!|P_{m}(\cdot|\psi_{i,m})|\!|_\infty$ in view of  (\ref{eq:supnormPoincare}). Putting $\psi_m:=\sum_{i=1}^n \frac{1}{\# \Gamma_{z_i}}\psi_{i,m} $ for $m\in \Z$ yields what we want by summing the equality (\ref{eq:1periodic}) for $\gamma z$ over $\gamma\in \Gamma_\infty\backslash \Gamma$ for each $i=1,\ldots, n$, using the equality (\ref{eq:Psii}) and bounding the tail using the aforementioned uniform bounds. Finally, since $\langle P_{m}(\cdot|\psi_m),1 \rangle=0$ holds automatically for $m\neq 0$ we see that $\langle \Psi,1 \rangle=0$ indeed implies $\langle P_{0}(\cdot|\psi_0),1 \rangle=0$.
\end{proof}
%\subsubsection{Lifting to the unit tangent bundle}

\section{Topology of fundamental polygons}\label{sec:fundamentalpol}
In this section, we will study the geometry and topology of fundamental polygons of discrete and cofinite Fuchsian groups. This will be key in controlling the contribution from the boundary in the equidistribution question of interest. 
\subsection{Fundamental polygons}
We refer to \cite[Chapter 9]{Beardon83} for background on the material in this section. A \emph{fundamental domain} for $\Gamma$ is a domain $\mathcal{F}\subset \Hb$ such that 
\begin{enumerate}
\item For any $z\in \Hb$ we have $\gamma z\in \overline{\mathcal{F}}$ for some $\gamma\in\Gamma$.
\item\label{item:2} If $z_1,z_2\in \overline{\mathcal{F}}$ are $\Gamma$-equivalent then $z_1=z_2$ or $z_1,z_2\in \partial \mathcal{F}$.
\end{enumerate}
Here and throughout we consider the closure of $\FF$ inside $\Hb\cup \mathbf{P}^1(\R)$. We say that a fundamental domain $\mathcal{F}$ is a \emph{fundamental polygon} for $\Gamma$ if $\FF$ is furthermore (hyperbolically) convex with boundary consisting of a finite number of geodesic segments, i.e.\ $\overline{\FF}$ is a convex hyperbolic polygon. We define a \emph{side of} $\mathcal{F}$ as a non-empty subset of the shape $\gamma \overline{\mathcal{F}}\cap \overline{\mathcal{F}}$ with $ \mathrm{Id}\neq \gamma \in\Gamma$. We define a \emph{vertex of} $\mathcal{F}$ as a non-empty subset (including possibly cusps) of the shape $\gamma_1 \overline{\mathcal{F}}\cap \gamma_2 \overline{\mathcal{F}} \cap \overline{\mathcal{F}}$ with $ \mathrm{Id},\gamma_1,\gamma_2\in \Gamma$ pairwise distinct. It can be shown that a fundamental polygon $\mathcal{F}$ has an even  number of sides which are pairwise $\Gamma$-equivalent and that $\Gamma \mathcal{F}$ gives a tessellation of $\Hb$. The multiset of elements identifying sides of $\mathcal{F}$ are called the \emph{side pairing transformations} associated to $\mathcal{F}$ which we denote by $\mathrm{sp}(\Gamma, \mathcal{F})\subset \Gamma$. Notice that if $\Gamma$ contains elements of order $2$ then two of the sides of $\FF$ might be contained in the same geodesic segment (of $\partial \FF$). To distinguish the two, we define the \emph{edges} of $\FF$ as the geodesic segments constituting $\partial \FF$, i.e.\ the set:
$$\{\LL:\text{side of $\FF$}\}/\sim,$$
where $\LL\sim\LL'$ if and only if there exists a geodesic in $\Hb$ containing both $\LL$ and $\LL'$. 

Let $\mathrm{cusp}(\FF)\subset \mathrm{cusp}(\Gamma)$ denote the set of cuspidal vertices of $\FF$. Then we define the cuspidal zone at $\mathfrak{a}\in \mathrm{cusp}(\FF)$ of height $Y>0$ as
\begin{equation}
\label{eq:cuspidalzone}
\FF_\mathfrak{a}(Y) \coloneqq \{z \in \FF : \ \Im(\sigma_{\mathfrak{a}}^{-1} z) \geq Y\},
\end{equation}
where $\sigma_{\mathfrak{a}} \in \Gamma$ is a scaling matrix for $\mathfrak{a}$, so that $\sigma_{\mathfrak{a}} \infty = \mathfrak{a}$; note that $\FF_{\mathfrak{a}}(Y)$ is independent of the choice of such a scaling matrix. We define the \emph{$Y$-bulk} of $\FF$ as:
\begin{equation}
\label{eq:bulk}
\FF(Y) := \FF\setminus \left(\bigcup_{\mathfrak{a}\in \mathrm{cusp}(\FF)}\FF_{\mathfrak{a}}(Y)\right),
\end{equation}
which by standard volume estimates for horostrips (see (\ref{eq:horostrip}) below) satisfies
\begin{equation}\label{eq:bulkvol}
\vol(\FF(Y))= \vol(\Gamma)+O_\FF(Y^{-1}),\quad \text{as }Y\rightarrow \infty.
\end{equation}

%Given a side $L$ of $\mathcal{F}$ we refer to \emph{the side pairing transformation  associated to the side $L$} as the element $\sigma \in \mathrm{sp}(\mathcal{F})$  such that $\sigma^{-1}L$ is also a side of $\mathcal{F}$. Similarly we say that $\sigma$ is the side pairing transformation associated to the side $\gamma L$ of the translate $\gamma \mathcal{F}$ for each $\gamma \in \Gamma$. 
\subsection{Structure of the boundary of fundamental polygons}\label{sec:topology}%%%%%%%%%%%%%%%%%%%%%%%%%%%%%%%
As observed in \cite{HumphriesNordentoft22} when the genus is non-trivial we will have to deal with certain \emph{topological terms} coming from the boundary $\partial \FF$. We will now explain how these can be controlled by the homology of the compactification $X_\Gamma$. For a topological space $X$ we denote by $H_1(X,\Z)$ its first singular homology group. %This is due to the following structure result for the boundary of fundamental domains. 
It will be convenient to also use the formalism of \emph{$\Delta$-complexes}, which is a slight generalization of usual simplicial complexes. We will refer to \cite[Sec. 2.1]{Hatcher02} for the required background. 

Let $\FF$ be a fundamental polygon of $\Gamma$. Then we define the \emph{boundary graph} as the image in $X_\Gamma$ of the boundary of the fundamental domain, i.e.: 
$$\Gamma\backslash \partial \FF,$$
considered as a graph with edges given by $\Gamma$-equivalence classes of sides of $\FF$ and with vertices given by $\Gamma$-equivalence classes of vertices of $\FF$, using the terminology  of the previous section. Then we have the following key lemma concerning the topology of boundary graphs. 
%Denote by $\Delta_1(\Gamma\backslash \partial \FF)$ 
\begin{lemma}\label{lem:structure}
Let $\FF$ be a fundamental polygon for discrete and cofinite subgroup $\Gamma\leq \PSL_2(\R)$. Then the boundary graph $\Gamma\backslash \partial \FF$ is homotopic to a wedge of $2g$ circles, where $g$ is the genus of $X_\Gamma$. Furthermore, the inclusion $\Gamma\backslash \partial \FF\subset X_\Gamma$ induces an isomorphism on singular homology
\begin{equation}H_1(\Gamma\backslash \partial\FF,\Z)\cong H_1(X_\Gamma,\Z)\cong \Z^{2g}. \end{equation}
\end{lemma}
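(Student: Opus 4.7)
The plan is to extract both claims from the CW-structure on the compactified surface $X_\Gamma$ induced by the fundamental polygon $\FF$.

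First I would verify that, after adjoining the ideal vertices (cusps) so that $\overline{\FF}$ becomes a topological closed disk in $X_\Gamma$, the projection $\pi_\Gamma:\overline{\FF}\rightarrow X_\Gamma$ endows $X_\Gamma$ with a finite CW-structure having exactly one $2$-cell, namely $\pi_\Gamma(\FF^\circ)$, with $1$-skeleton equal to the boundary graph $\Gamma\backslash\partial\FF$. The edges and vertices of this CW-structure are the $\Gamma$-orbits of edges and vertices of $\FF$, including cuspidal vertices and any order-$2$ elliptic fixed points lying on $\partial\FF$. Denoting the number of such orbits by $E$ and $V$ respectively, Euler's formula gives $V-E+1=\chi(X_\Gamma)=2-2g$, since $X_\Gamma$ is a closed orientable topological surface of genus $g$.

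For the first claim I would observe that $\Gamma\backslash\partial\FF$ is connected, being the continuous image of the topological circle $\partial \overline{\FF}$, and compute $\chi(\Gamma\backslash\partial\FF)=V-E=1-2g$. Since every finite connected graph is homotopy equivalent to a wedge of $1-\chi$ circles, this yields $\Gamma\backslash\partial\FF\simeq \bigvee_{i=1}^{2g} S^1$ and in particular $H_1(\Gamma\backslash\partial\FF,\Z)\cong\Z^{2g}$.

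For the second claim I would invoke the long exact sequence of the good pair $(X_\Gamma,\Gamma\backslash\partial\FF)$. Collapsing the $1$-skeleton in $X_\Gamma$ crushes the boundary of the single $2$-cell to a point, so $X_\Gamma/(\Gamma\backslash\partial\FF)\cong S^2$ and hence $H_i(X_\Gamma,\Gamma\backslash\partial\FF)\cong \widetilde{H}_i(S^2)$. Combined with $H_2(\Gamma\backslash\partial\FF)=0$ (it is one-dimensional) this produces the exact sequence
\begin{equation*}
0\rightarrow H_2(X_\Gamma)\xrightarrow{\alpha}\Z\rightarrow H_1(\Gamma\backslash\partial\FF)\xrightarrow{\iota_*} H_1(X_\Gamma)\rightarrow 0.
\end{equation*}
The map $\alpha$ is induced by the quotient $q:X_\Gamma\rightarrow S^2$, which has degree one since it restricts to a homeomorphism on the interior of the $2$-cell; hence $\alpha:\Z\rightarrow\Z$ is an isomorphism, forcing $\iota_*$ to be an isomorphism as well and giving $H_1(\Gamma\backslash\partial\FF,\Z)\cong H_1(X_\Gamma,\Z)\cong\Z^{2g}$.

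The one delicate point I anticipate is the combinatorial bookkeeping of the CW-structure in the presence of elliptic fixed points of order $2$ and cusps on $\partial\FF$: one needs these to appear as genuine $0$-cells so that the $2$-cell really is an open disk and the edges are pairwise identified under the side-pairing transformations of $\FF$. Once this setup is in place, the rest of the argument is purely formal — the substantive content is simply that the boundary graph is the $1$-skeleton of a single-$2$-cell CW-structure on a closed orientable surface of genus $g$.
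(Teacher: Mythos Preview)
Your argument is correct and complete. It differs from the paper's proof mainly in packaging rather than substance, but the packaging is genuinely different enough to be worth noting.

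The paper picks an interior point $x_0\in X_\Gamma$ and observes that the radial deformation retract of the closed polygon $\overline{\FF}\setminus\{z_0\}$ onto $\partial\FF$ descends to a deformation retract of $X_\Gamma\setminus\{x_0\}$ onto $\Gamma\backslash\partial\FF$; since a once-punctured genus $g$ surface is homotopy equivalent to a wedge of $2g$ circles, the first claim follows. For the second claim the paper factors the inclusion through $X_\Gamma\setminus\{x_0\}$ and appeals to Mayer--Vietoris to show $H_1(X_\Gamma\setminus\{x_0\})\to H_1(X_\Gamma)$ is an isomorphism. Your route instead makes the CW-structure explicit, reads off the homotopy type of the $1$-skeleton from its Euler characteristic, and uses the long exact sequence of the pair $(X_\Gamma,\Gamma\backslash\partial\FF)$ together with the degree computation for the collapse map $X_\Gamma\to S^2$.

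The two arguments are essentially dual: the paper's deformation retract is precisely the statement that the boundary graph is the $1$-skeleton of a single-$2$-cell CW-structure, and Mayer--Vietoris for the cover $\{X_\Gamma\setminus\{x_0\},\text{disk}\}$ encodes the same information as your long exact sequence of the pair. Your version is a bit more self-contained (it does not invoke the homotopy type of a punctured surface as a black box), at the cost of the bookkeeping you flag regarding order-$2$ elliptic points and cusps on $\partial\FF$; the paper's geometric approach sidesteps that bookkeeping entirely.
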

\begin{proof}
Let $z_0\in \Hb$ be a point in the interior of $\FF$ and let $x_0:=\pi_\Gamma(z_0)\in Y_\Gamma\subset X_\Gamma$ be its projection modulo $\Gamma$. Since $\overline{\FF}$ is a polygon (and thus homeomorphic to a ball) there exists a deformation retract of $\overline{\FF}\setminus\{z_0\}$ onto  the boundary $\partial\FF$ which projects to a deformation retract of $X_\Gamma\setminus\{x_0\}$ onto $\Gamma\backslash \partial\FF$. Now $X_\Gamma\setminus\{x_0\}$ is a genus $g$ curve with a puncture and is thus homotopic to a wedge of $2g$ circles which yields the first claim. Consider now the maps on singular homology induced by the natural inclusions: 
$$H_1(\Gamma\backslash \partial \FF,\Z)\rightarrow  H_1( X_\Gamma\setminus \{x_0\},\Z) \rightarrow H_1(X_\Gamma,\Z). $$
The first map is an isomorphism by the above deformation retract. The second map is an isomorphism by a standard argument using the Mayer--Vietoris sequence. Finally, since $X_\Gamma$ is a closed surface of genus $g$ we have $H_1(X_\Gamma,\Z)\cong \Z^{2g}$. 
\end{proof} 
By choosing an orientation on each edge of the boundary graph, we realize $\Gamma\backslash \partial \FF$ as a $1$-dimensional $\Delta$-complex with $0$-simplices given by the vertices of $\Gamma\backslash \partial \FF$ and the $1$-simplices given by the oriented edges of $\Gamma\backslash \partial \FF$. Denote by 
$$\Delta_1(\Gamma\backslash \partial\FF):=\Z[\text{$1$-simplices of }\Gamma\backslash \partial\FF],$$ 
the simplicial $1$-chains of the boundary graph, and by 
$$\Delta^\mathrm{cl}_1(\Gamma\backslash \partial\FF)\subset \Delta_1(\Gamma\backslash \partial\FF),$$
the subgroup of closed simplicial $1$-chains, i.e.\ the kernel of the boundary map given by sending a $1$-chain to the (oriented) difference between its endpoints: 
$$\partial_1:\Delta_1(\Gamma\backslash \partial\FF)\rightarrow \Delta_0(\Gamma\backslash \partial\FF):=\Z[\text{$0$-simplices of }\Gamma\backslash \partial\FF].$$
Since $\Gamma\backslash \partial\FF$ is one dimensional, the $\Delta$-simplicial homology group is given by the following:
$$ H^\Delta_1(\Gamma\backslash \partial\FF):=\ker \partial_1/\mathrm{im}\, \partial_2=\Delta^\mathrm{cl}_1(\Gamma\backslash \partial\FF), $$
which is isomorphic to the singular homology group $H_1(\Gamma\backslash \partial\FF,\Z)$ by general principles (see \cite[Sec. 2.1]{Hatcher02}). In particular, by Lemma \ref{lem:structure} the inclusion $\Gamma\backslash \partial \FF\subset X_\Gamma$ induces an isomorphism on homology
\begin{equation}\label{eq:isohom}H^\Delta_1(\Gamma\backslash \partial\FF)\cong  H_1(X_\Gamma,\Z),\end{equation}
given by viewing (closed) simplicial $1$-chains as (closed) singular $1$-chains on $X_\Gamma$ via the natural inclusion. Fix an integral basis 
\begin{equation}\label{eq:bnd}\CC_{1}(\FF),\ldots ,\CC_{2g}(\FF)\in \Delta^\mathrm{cl}_1(\Gamma\backslash \partial\FF)\cong \Z^{2g},\end{equation} 
and denote by $[\CC_{i}(\FF)]\in H_1(X_\Gamma,\Z)$ the image of  $\CC_{i}(\FF)$ under the isomorphism (\ref{eq:isohom}). It follows that  
$$[\CC_{1}(\FF)],\ldots ,[\CC_{2g}(\FF)]\in H_1(X_\Gamma,\Z)$$ 
form an integral basis for $H_1(X_\Gamma,\Z)$. 
% $\Gamma\backslash \partial\FF$ which are deformed to the $2g$ circles by the above lemma (depending on the choice of homotopy between $X_\Gamma\setminus\{z_0\}$ and a wedge of $2g$ circles) each equipped with some choice of orientation. Notice that indeed the subsegments (\ref{eq:bnd}) define closed curves on $X_\Gamma$ and denote by $$[\CC_{1}(\FF)],\ldots ,[\CC_{2g}(\FF)]\in H_1(X_\Gamma,\Z)\subset H_1(X_\Gamma,\R)\cong \R^{2g},$$ the corresponding classes in the first homology group of $X_\Gamma$ (with real coefficients). By the above lemma these classes define a basis for the homology (which even span the lattice  $H_1(X_\Gamma,\Z)$). 
Let 
\begin{equation}\label{eq:coho}\omega_{1}(\FF),\ldots ,\omega_{2g}(\FF)\in H^1(X_\Gamma,\Z),\end{equation}
be the dual basis in cohomology with respect to the integral Poincar\'{e} pairing
$$\langle\cdot, \cdot\rangle_\mathrm{P}: H_1(X_\Gamma,\Z)\times H^1(X_\Gamma,\Z)\rightarrow \Z. $$ 
It is a standard fact of Hodge theory that for each $i=1,\ldots, 2g$ we can represent  $\omega_{i}(\FF)$ (under the de Rham isomorphism) by a differential $1$-form of the shape
\begin{equation}\label{eq:DF}  f_{i}(z)dz+\overline{g_i(z)dz},\end{equation}
where $f_i,g_i: \Hb\rightarrow \C$ are \emph{cuspidal, holomorphic modular forms of weight $2$ for $\Gamma$}, i.e.\ holomorphic maps $f_i:\Hb\rightarrow \C$ vanishing at the cusps of $\Gamma$ and satisfying 
$$f_i(\gamma z)=(cz+d)^2 f_i(z), \quad z\in \Hb, \gamma\in \Gamma,$$
and similarly  for $g_i$.  Notice that if $f$ is a holomorphic modular form of weight $2$ for $\Gamma$ then $z\mapsto (\Im z) f(z)$ defines an automorphic form of weight $2$ for $\Gamma$ in the sense of equation (\ref{eq:automorphy}). 

\section{Partial coverings defined from geodesics ({\lq}{\lq}painting to the left{\rq{\rq}})}\label{sec:PC}%%%%%%%%%%%%%%%%%%%%%%%%%%%%%%%%%%%%%%%%%%%%%%%%%%%%%%%%%%%%%%%%%%%%%%%%%%%%%%%%%%%%%%%%%%
In this section we will give the formal definition of the notion of \emph{partial covering} alluded to in the introduction, then obtain examples via {\lq}{\lq}painting to the left{\rq{\rq}} and finally, compare the partial coverings to the constructions in \cite{DuImTo} and \cite{HumphriesNordentoft22}. We encourage the geometrically oriented reader to consult the figures in Example \ref{ex:triangle}. 

As above, let $\Gamma\leq \PSL_2(\R)$ be a discrete and cofinite subgroup and let $\FF$ be a fundamental polygon for $\Gamma$. Denote by $\Hom_\mathrm{meas}(Y_\Gamma,\Z_{\geq 0})$ the monoid of measurable maps $Y_\Gamma\rightarrow \Z_{\geq 0}$ with addition defined on the target. We define the \emph{(monoid of) partial coverings of $Y_\Gamma$} as 
$$\mathrm{Partial}(Y_\Gamma):=\Hom_\mathrm{meas}(Y_\Gamma,\Z_{\geq 0})/\langle\varphi\in \Hom_\mathrm{meas}(Y_\Gamma,\Z_{\geq 0}):\supp (\varphi)\text{ null set}\rangle. $$ 
%$$\mathrm{Partial}(Y_\Gamma):=\Z_{\geq 0}(Y_\Gamma)/\Z_{\geq 0}[\text{null sets of }Y_\Gamma]$$ 
For any measurable finite volume subset $\mathcal{B}\subset \Hb$ we obtain a partial covering which (by slight abuse of notation) we denote by the same symbol:
\begin{equation}\label{eq:BBsubset}\mathcal{B}:[x\mapsto \#(\pi_\Gamma^{-1}(\{x\})\cap \mathcal{B})]\in\mathrm{Partial}(Y_\Gamma),\end{equation}
%\begin{equation}\label{eq:BBsubset}B:=\sum_{z\in \BB}\pi_\Gamma(z)\in\mathrm{Partial}(Y_\Gamma),\end{equation}
where $\pi_\Gamma:\Hb\rightarrow Y_\Gamma$ denotes the canonical projection map (\ref{eq:piG}). Note that this is well-defined since 
$$\{x\in Y_\Gamma:\#(\pi_\Gamma^{-1}(\{x\})\cap \mathcal{B})=\infty\},$$
has measure zero due to the fact that $\mathcal{B}$ has finite volume. 

Let $\CC$ be an oriented closed geodesic on $Y_\Gamma$ and let $\tilde{\CC}\subset \Hb$ be a lift to the universal cover. Let $\mathcal{S}\subset \Hb$ be the unique infinite (two-sided) oriented geodesic  containing $\tilde{\CC}$. Let $x_0,x_1\in\mathbf{P}^1(\R)$ be  the ordered endpoints of $\mathcal{S}$. Let $\sigma\in \PSL_2(\R)$ be a matrix such that 
$$ \sigma \mathcal{S}= i\R,\quad \sigma x_0=\infty,\quad  \sigma x_1=0,$$
(note that this condition defines a one-parameter subgroup of matrices $\sigma$) and put 
\begin{equation}\label{eq:tildeB}\mathcal{B}:=\sigma^{-1}\{z\in \Hb: \Re z\geq 0\}\subset \Hb, \end{equation}   
i.e.\ the interior of $\mathcal{S}$ relative to the orientation (which does not depend on the choice of $\sigma$). Let $\{\gamma_1,\ldots, \gamma_m\}\subset \Gamma$ be the set of matrices (possibly empty) such that $\tilde{\CC}$ intersects $\gamma_i\FF$.  The subsegment of $\tilde{\CC}$ contained in $\overline{\gamma_i\FF}$  together with the part of the boundary $\partial(\gamma_i\FF)$ connecting the (oriented) intersection $\partial(\gamma_i\FF)\cap \tilde{\CC}$ bounds a hyperbolic polygon, namely:
\begin{equation}\label{eq:PPi} \PP_i:=\mathcal{B}\cap \gamma_i\FF.\end{equation}
This is depicted in Figures \ref{fig:1} and \ref{fig:2}.
\begin{defi}
We define the \emph{the partial covering of $\FF$ with respect to $\CC$ and $\FF$} as 
\begin{equation}\label{eq:PPdef}\PP(\CC,\FF):=\sum_{i=1}^m \PP_i\in \mathrm{Partial}(Y_\Gamma),\end{equation} 
with $\PP_i$ given by eq. (\ref{eq:PPi}) considered as an element of $\mathrm{Partial}(Y_\Gamma)$ using eq. (\ref{eq:BBsubset}) above. Here we put $\PP(\CC,\FF)=0$ if $\CC$ is contained in $\Gamma\backslash\partial{\FF}$. Note that this definition does not depend on the choice of the lift $\tilde{\CC}$. 
\end{defi}
Note that $Y_\Gamma$ can be partitioned into a finite number of hyperbolic polygons such that $\PP(\CC,\FF)$, thought of as a function $Y_\Gamma\rightarrow \Z_{\geq 0}$, is constant when restricted to the interior of each polygon. This is depicted in Figure \ref{fig:3} for the $(2,4,6)$ triangle group. 

Similarly if $\mathbf{C}=\{\CC_1,\ldots, \CC_N\}$ is a finite collection of oriented closed geodesics, we define  
$$\PP(\Cc,\FF):=\sum_{\CC\in \Cc}\PP(\CC,\FF),\qquad |\Cc|:=\sum_{\CC\in \Cc}|\CC|.$$
%Recall that we can identity an element $\PP\in \R_{\geq 0}(Y_\Gamma)$ with a function $\PP:Y_\Gamma\rightarrow \R_{\geq 0}$. We say that $\PP\in \R_{\geq 0}(Y_\Gamma)$ is \emph{measurable} if the associated function is measurable in which case 
We define the \emph{volume} of $\PP\in \mathrm{Partial}(Y_\Gamma)$ as: 
$$\vol(\PP):=\int_{Y_\Gamma} \PP(z) d\mu(z)\in [0,\infty]. $$
For an integrable function $\phi:Y_\Gamma\rightarrow \C$ we define the \emph{integral of $\phi$ over $\PP$} as: 
$$\int_{\PP} \phi:=\int_{Y_\Gamma} \PP(z)\phi(z) d\mu(z), $$
and for a measurable subset $\mathcal{B}\subset Y_\Gamma$ we denote
\begin{equation}\label{eq:volumeintersection} \vol(\PP\cap \mathcal{B}):= \int_{\PP} \mathbf{1}_\mathcal{B}. \end{equation}
In the setting of \cite{DuImTo} it was observed that the partial coverings associated to an oriented closed geodesics and its opposite covers the modular surface evenly. This is also true in our setting.
\begin{lemma}\label{lem:trivial}
Let $\CC$ be an oriented closed geodesic on $Y_\Gamma$ and let $\overline{\CC}$ denote the oriented closed geodesic with the same image and opposite orientation. Then 
$$\PP(\CC,\FF)+\PP(\overline{\CC},\FF)=nY_\Gamma\in \mathrm{Partial}(Y_\Gamma),$$
for some $n\in \Z_{\geq 0}$.  
\end{lemma}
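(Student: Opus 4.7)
The plan is to exploit the fact that the oriented geodesic $\overline{\CC}$ admits the same underlying lift as $\CC$ but with reversed orientation, so that the two sides of the cutting geodesic are swapped. Concretely, I would choose a lift $\tilde{\CC}\subset \Hb$ of $\CC$; this is simultaneously a lift of $\overline{\CC}$, with the orientation reversed. Let $\tilde{S}$ be the bi-infinite geodesic through $\tilde{\CC}$ and let $\tilde{B}\subset \Hb$ be the open half-plane on the left of $\tilde{S}$ with respect to the orientation of $\CC$, as in eq.~(\ref{eq:tildeB}). Then the corresponding half-plane for $\overline{\CC}$ is exactly the opposite open half-plane $\tilde{B}':=\Hb\setminus(\tilde{B}\cup \tilde{S})$. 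The set of translates $\{\gamma_1\FF,\ldots,\gamma_m\FF\}$ whose interiors meet $\tilde{\CC}$ is the same for $\CC$ and $\overline{\CC}$, because it depends only on the underlying unoriented curve. Thus with notation as in eq.~(\ref{eq:PPi}), the polygons $\overline{\PP}_i$ associated to $\overline{\CC}$ via the same lift are $\overline{\PP}_i=\tilde{B}'\cap \gamma_i\FF$.

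The first key step is the pointwise identity
\[
\PP_i+\overline{\PP}_i=\gamma_i\FF\qquad \text{in }\mathrm{Partial}(Y_\Gamma),
\]
which holds since $\tilde{B}\sqcup \tilde{B}'=\Hb\setminus \tilde{S}$ differs from $\Hb$ only on the measure-zero set $\tilde{S}$, and elements of $\mathrm{Partial}(Y_\Gamma)$ are identified modulo null sets. Summing over $i$ and using eq.~(\ref{eq:PPdef}) gives
\[
\PP(\CC,\FF)+\PP(\overline{\CC},\FF)=\sum_{i=1}^m \gamma_i\FF \quad \text{in }\mathrm{Partial}(Y_\Gamma).
\]

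The second step is to evaluate the right-hand side pointwise. Fix a point $x\in Y_\Gamma$ in the complement of the (measure-zero, and hence negligible) image of $\bigcup_{\gamma}\partial(\gamma \FF)$, and let $z\in \FF^\circ$ be its unique preimage. Then $\pi_\Gamma^{-1}(\{x\})\cap \gamma_i\FF=\{\gamma_i z\}$, a single point, for each $i=1,\ldots,m$; in particular each $\gamma_i\FF$ contributes exactly $1$ to the value of the partial covering at $x$. Consequently
\[
\Bigl(\sum_{i=1}^m \gamma_i\FF\Bigr)(x)=m
\]
for almost every $x\in Y_\Gamma$. Identifying the constant partial covering of value $m$ with $m\cdot Y_\Gamma$, this gives the desired equality with $n=m$, the number of tiles of the tessellation $\Gamma\FF$ that the lift $\tilde{\CC}$ traverses.

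The verification is almost entirely a careful bookkeeping, and I expect no substantial obstacle. The only subtleties are (i) handling the orbifold points where the preimage count above could fail, and (ii) making sure that the boundary contributions (where $\tilde{\CC}$ or $\partial\FF$ meet) are measure zero; both are routine in view of the definition of $\mathrm{Partial}(Y_\Gamma)$ as measurable maps modulo null-supported ones, and the fact that the tiles $\{\gamma\FF\}_{\gamma\in \Gamma}$ tessellate $\Hb$ with interiors pairwise disjoint.
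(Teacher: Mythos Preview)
Your proof is correct and follows essentially the same approach as the paper's: choose a common lift for $\CC$ and $\overline{\CC}$, observe that the corresponding half-planes $\tilde{B}$ and $\tilde{B}'$ partition $\Hb$ up to a null set, and conclude that $\PP_i+\overline{\PP}_i=\gamma_i\FF$ in $\mathrm{Partial}(Y_\Gamma)$. Your second step, spelling out why $\sum_{i=1}^m\gamma_i\FF$ is the constant partial covering $m\cdot Y_\Gamma$, is more explicit than the paper's terse ``This yields the wanted,'' but the underlying argument is the same.
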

\begin{proof}
Let $\tilde{\CC}\subset \Hb$ be a lift of $\CC$. Then the opposite of $\tilde{\CC}\subset \Hb$ is a lift of $\overline{\CC}$. Let $\mathcal{B}$ and $\mathcal{B}'$ be the interior of the infinite geodesics containing respectively, $\tilde{\CC}$ and its opposite, as defined in (\ref{eq:tildeB}). Then we see that
$$\mathcal{B}\cup\mathcal{B}'=\Hb,\quad \mu(\mathcal{B}\cap\mathcal{B}')=0,$$
and so the corresponding polygons $\PP_i$ and $\PP_i'$ (if any such exists) tile the corresponding $\Gamma$-translate of $\FF$. This yields the wanted.
\end{proof}
We will now consider the above constructions in a specific co-compact example. 
\begin{example}\label{ex:triangle}
Consider the $(2,4,6)$ triangle group, i.e.\ the discrete and co-compact subgroup
$$\Gamma_\triangle:=\left\langle \begin{psmallmatrix}0 & -1\\ 1& 0 \end{psmallmatrix},\begin{psmallmatrix}1/\sqrt{2} & (1+\sqrt{3})/2\\ (1-\sqrt{3})/2& 1/\sqrt{2} \end{psmallmatrix}\right\rangle=\langle S, \sigma\mid  S^2=\sigma^4=(\sigma S)^6=1\rangle\subset \PSL_2(\R).$$
The corresponding hyperbolic orbifold has signature $(0; 2,4,6;0)$ and a fundamental polygon $\FF_\triangle\subset \Hb$ is given by the (hyperbolic) triangle with vertices $\{\tfrac{1+i}{\sqrt{2}},\tfrac{-1+i}{\sqrt{2}}, \tfrac{i(1+\sqrt{3})}{\sqrt{2}}\}$ (see Figure \ref{fig:3}) for which $\mathrm{sp}(\Gamma_\triangle, \FF_\triangle)=\{S,\sigma, \sigma^{-1}\}$ are the side pairing transformations. Consider the following three hyperbolic elements of $\Gamma_\triangle$: 
\begin{equation} \gamma_1=\sigma^2 S=\begin{psmallmatrix}\sqrt{2}/(1+\sqrt{3}) & 0\\ 0&(1+\sqrt{3})/\sqrt{2} \end{psmallmatrix},\quad  \gamma_2=\sigma S \sigma^3 S=\begin{psmallmatrix}(3+\sqrt{3})/2 & 1/\sqrt{2}  \\ 1/\sqrt{2} & \sqrt{3}/(1+\sqrt{3}) \end{psmallmatrix},\end{equation} \begin{equation}  \gamma_3=S\sigma^2 S\sigma^2S\sigma =\begin{psmallmatrix} -2/(\sqrt{3} + 1)^3 & \sqrt{2}/(\sqrt{3} + 1)^2 \\ -(\sqrt{3} + 1)^2/(2\sqrt{2}) & -(\sqrt{3} + 1)^3/4 \end{psmallmatrix} \end{equation}
whose conjugacy classes  correspond to three oriented closed geodesics $\CC_1,\CC_2,\CC_3$.  Figures \ref{fig:1} and \ref{fig:2} show a specific  choice of lifts $\tilde{\CC}_1,\tilde{\CC}_2,\tilde{\CC}_3\subset \Hb$ such that $\tilde{\CC}_i$ is contained in the axis of $\gamma_i$ (i.e.\ the unique infinite geodesic stabilized by $\gamma_i$). Furthermore, Figures \ref{fig:1} and \ref{fig:2} depict the corresponding polygons $\PP_1,\ldots, \PP_m$ obtained by considering the intersection of $\tilde{\CC}_1,\tilde{\CC}_2,\tilde{\CC}_3$ with the tessellation of $\Hb$ by the fundamental polygon $\FF_\triangle$. Note that $\CC_1$ and $\CC_2$ are both \emph{reciprocal geodesics} for $\Gamma_\triangle$ in the terminology of \cite{SarnakReciprocal} meaning that $\gamma_1$ and $\gamma_2$ are both conjugate in $\Gamma_\triangle$ to their respective inverses. This means geometrically that $\CC_1=\overline{\CC}_1$ and $\CC_2=\overline{\CC}_2$ (i.e.\ the closed geodesics retrace back over themselves) which in view of Lemma \ref{lem:trivial} implies that the corresponding partial coverings $\mathcal{P}(\mathcal{C}_i,\mathcal{F}_\triangle),i=1,2$ are constant functions, i.e.\ they cover $Y_{\Gamma_\triangle}$ perfectly. This is not the case for $\CC_3$ and the partial covering associated to $(\CC_3,\FF_\triangle)$ is depicted in Figure \ref{fig:3} with gradient corresponding to the values of the function $\mathcal{P}(\mathcal{C}_3,\mathcal{F}_\triangle): Y_{\Gamma_\triangle}\rightarrow \Z_{\geq 0}$ (from lightest $\leftrightarrow 3$ to darkest $\leftrightarrow 8$).  

%$\mathcal{P}(\mathcal{C},\mathcal{F})$ is constant (as a function) with gradient (from dark=8 to light gray=3) corresponding.
\tikzmath{
\r=2.7321;
\d=1.9318;
\c=0; %M
%\r0=1;
\cc1=\d; %A
\rr1=\r;
\ccc2=-\d; %B
\rrr2=\r;
\cc2=0.5176380893; %S.A 
\rr2=0.7320508067;
\cc3=-0.5176380913; %S*C.A
\rr3=0.7320508087;
\cc4=-1.931851653; % C.A
\rr4=2.732050807;
% C^2.A=A
\cc5=-3.346065213; %C^{-1}.M=-C*M
\rr5=2.732050807;
\cc6=-0.8965754733; % C^{-1}*S.M
\rr6=0.7320508087;
\cc7=3.346065213; %C*S.M
\rr7=2.732050807;
\cc8=-0.3789373823;%S*C*S.A
\rr8=0.2679491925;
\cc9=-1.414213561;
\rr9=1; %S*C*S.B
\rrr1=0.2679491926; %S*C.M
\ccc1=0;
\ccc2=-0.1387007082;%S*C^2*S.B
\rrr2=0.1961524226; 
\ccc3=0.1387007082;%-S*C^2*S.B
\rrr3=0.1961524226; 
\ccc4=-0.2402366743; %S*C^2*S^C.M
\rrr4=0.1961524231;
\ccc5=0;
\rrr5=0.07179676984;
\ccc6=0;
\rrr6=0;
\ccc7=0;
\rrr7=0;
\ccc8=0;
\rrr8=0;
\ccccc2=13.19479210;
\rrrrr2=12.65599998;
\ccccc4=0.2869633204;
\rrrrr4=1.376227736;
\ccccc6=3.535533899;
\rrrrr6=3.391164988;
\ccccc8=0.9473434545;
\rrrrr8=0.9086599200;
%%%%%%%%%%%%%%%%%%%%%%
\ccccc1=1.224744872;
\rrrrr1=1.581138830;
\ccccc3=-1.224744872;
\rrrrr3=1.581138830;
\ccccc5=-1.224744872;
\rrrrr5=1.581138830;
\ccccc7=0;
\rrrrr7=0;
\ccccc9=0;
\rrrrr9=0;
\geodesicc=-0.9473434545;%((((S . (C^2)) . S) . (C^2)) . S) . C, red
\geodesicr=0.9086599200;
\sl1=-1/(\d+\r);
\sr1=-1/(\d-\r);
\c1=(\sl1+\sr1)/2;
\rrrrrr1=(\sl1-\sr1)/2;
\sl2=-1/(-\d+\r);
\sr2=-1/(-\d-\r);
\c2=(\sl2+\sr2)/2;
\rrrrrr2=(\sl2-\sr2)/2;
\sl3=(\d+1)/(\d^2-\r^2+\d);
\sr3=(-\d+1)/(-\d^2+\r^2+\d);
\c3=(\sl3+\sr3)/2;
\rrrrrr3=(\sl3-\sr3)/2;
\sl4=(-5.664326539)/(-3.663871272);
\sr4=(1.800312672)/(0.1998320314);
\c4=(\sl4+\sr4)/2;
\rrrrrr4=(\sr4-\sl4)/2;
\sl5=(-0.3861245733)/(-0.9318696580);
\sr5=(7.078514637)/(2.931833645);
\c5=(\sl5+\sr5)/2;
\rrrrrr5=(\sr5-\sl5)/2;
\sl6=(2.931833646)/(7.078514639);
\sr6=(-0.9318696582)/(-0.3861245734);
\c6=(\sl6+\sr6)/2;
\rrrrrr6=(\sr6-\sl6)/2;
\sl7=(0.1998320314)/(1.800312672);
\sr7=(-3.663871273)/(-5.664326540);
\c7=(\sl7+\sr7)/2;
\rrrrrr7=(\sr7-\sl7)/2;
\sl8=(0.8587581124)/(0.1414746763);
\sr8=(1.590782558)/(2.591015347);
\c8=(\sl8+\sr8)/2;
\rrrrrr8=(\sr8-\sl8)/2;
\sl9=(1.346132660)/(2.318105649);
\sr9=(-5.346257405)/(-5.147330343);
\c9=(\sl9+\sr9)/2;
\rrrrrr9=(\sr9-\sl9)/2;
\ssl3=2.805883700;%0.1443689152;
\ssr3=-0.3563939582;%6.926698880;
\ccccccc3=(\ssl3+\ssr3)/2;
\rrrrrrr3=(\ssr3-\ssl3)/2;
}

\begin{figure}
\begin{tikzpicture}[scale=2.5]
\clip (-1.1,-0.2) rectangle (1.1,2);
\fill[white] (-1.1,-0.3) rectangle (1.1,2);
\begin{scope}
  \clip (-\d,0) circle(\r);
   \clip (\d,0) circle(\r);
  \fill[black,opacity=0.2] (0,0) rectangle (1,4);
  \draw[thick] (0,0)--(0,2);
  \draw[very thin] (0,0) circle (1);
  \draw[very thin]  (\c1,0) circle(\rrrrrr1);
\draw[very thin]   (\c2,0) circle(\rrrrrr2);
\end{scope}
\begin{scope}
\clip (-\d,0) circle(\r);
\draw[ultra thin] (\d,0) circle(\r);
\end{scope}
\begin{scope}
\clip (\d,0) circle(\r);
\draw[ultra thin] (-\d,0) circle(\r);
\end{scope}
\fill[white]  (\c1,0) circle(\rrrrrr1);
\fill[white]  (\c2,0) circle(\rrrrrr2);
\draw[ultra thin,decoration={markings,
      mark=between positions 0 and 1 step 14pt with {\arrowreversed{to}}},
    postaction={decorate}] (0,0) -- (0,2);
  \fill[white] (-1,-0.3) rectangle (1,0);
  \draw[line width=0mm,gray] (-2,0) -- (2,0);
 
  \node at (0,0) [below] {{\tiny\color{gray} $ 0$}};
    \node at (-1,0) [below] {{\tiny\color{gray} $ -1$}}; 
      \node at (1,0) [below] {{\tiny\color{gray} $ 1$}};
      \draw[line width=0mm, gray] (0,0) -- (0,0.03);
    \draw[line width=0mm, gray] (-1,0) -- (-1,0.03);
      \draw[line width=0mm, gray] (1,0) -- (1,0.03);
      \node at (0.3,1.2) {$\mathcal{P}_1$}; 
        \node at (0.2,0.8) {$\mathcal{P}_2$};
%\draw (\c,0) circle(\rr);
\end{tikzpicture}
\begin{tikzpicture}[scale=2.5]
\clip (-1.1,-0.2) rectangle (1.1,2);
\fill[white] (-1.1,-0.2) rectangle (1.1,2);
\begin{scope}
  \clip (-\d,0) circle(\r);
  \fill[black,opacity=0.2] (\d,0) circle(\r);
\end{scope}
\fill[white] (\ccccccc3,0) circle(\rrrrrrr3);
\fill[white]  (\c1,0) circle(\rrrrrr1);
\fill[white] (-\c7,0) circle(\rrrrrr7);
%\fill[white] (-\c6,0) circle(\rrrrrr6);

\begin{scope}
  \clip (-\d,0) circle(\r);
  \draw[very thin] (\d,0) circle(\r);
\end{scope}

\begin{scope}
  \clip (\d,0) circle(\r);
  \draw[very thin] (-\d,0) circle(\r);
\end{scope}
\draw[very thin] (0,0) circle(1);
\begin{scope}
  \clip (-\d,0) circle(\r);
  \draw[very thin]  (\c1,0) circle(\rrrrrr1);
\end{scope}
\draw[very thin] (\c2,0) circle(\rrrrrr2);
\draw[very thin](\c3,0) circle(\rrrrrr3);
\begin{scope}
  \clip (-\d,0) circle(\r);
  \draw[thick] (\ccccccc3,0) circle(\rrrrrrr3);
\end{scope}
\draw[very thin] (-\c7,0) circle(\rrrrrr7);
%\draw[very thin] (-\c9,0) circle(\rr9);
\begin{scope}
  \clip (\d,0) circle(\r);
  \draw[very thin] (-\c6,0) circle(\rrrrrr6);
\end{scope} 
\fill[white] (-\c7,0) circle(\rrrrrr7); 
\fill[white] (-\c6,0) circle(\rrrrrr6);
\fill[white] (\c1,0) circle(\rrrrrr1);
  \draw[line width=0mm, decoration={markings,
      mark=between positions 0 and 1 step 14pt with {\arrowreversed{to}}},
    postaction={decorate}] (\ccccccc3,0) circle(\rrrrrrr3);
  \fill[white] (-2,-0.3) rectangle (2,0);
   \draw[line width=0mm,gray] (-2,0) -- (2,0);
 
  \node at (0,0) [below] {{\tiny\color{gray} $ 0$}};
    \node at (-1,0) [below] {{\tiny\color{gray} $ -1$}}; 
      \node at (1,0) [below] {{\tiny\color{gray} $ 1$}};
      \draw[line width=0mm, gray] (0,0) -- (0,0.03);
    \draw[line width=0mm, gray] (-1,0) -- (-1,0.03);
      \draw[line width=0mm, gray] (1,0) -- (1,0.03);
      \node at (0,1.4) {$\mathcal{P}_4$}; 
        \node at (-0.3,0.8) {$\mathcal{P}_3$};
        \node at (-0.35,0.6) {\tiny $\mathcal{P}_2$};
        \node at (-0.4,0.4) {\tiny $\mathcal{P}_1$};
%\draw (\c,0) circle(\rr);
\end{tikzpicture}
\caption{Polygons $\PP_1,\ldots, \PP_m$ (shaded) obtained from the closed geodesics $\CC_1$ and $\CC_2$ when $\Gamma=\Gamma_\triangle$ and $\mathcal{F}=\mathcal{F}_\triangle$.}\label{fig:1}
\end{figure}
\begin{figure}

\begin{tikzpicture}[scale=3]
\clip (-2.8,-0.2) rectangle (1.1,3);
\fill[white] (-3,-0.7) rectangle (3,4);
\fill[black,opacity=0.2] (-\d,0) circle(\r);
\draw[ultra thin] (\cc4,0) circle (\rr4);
\fill[white] (\cc2,0) circle (\rr2-0.0006); %need M before this

\begin{scope}
  \clip (\cc2,0) circle (\rr2);
  \fill[black,opacity=0.2] (\cc3,0) circle (\rr3-0.0006);
\end{scope}
\fill[white] (\geodesicc,0) circle(\geodesicr-0.0006);
\draw[ultra thin] (\ccc4,0) circle(\rrr4);
\draw[ultra thin] (\ccc5,0) circle(\rrr5);
\begin{scope}
  \clip (-\d,0) circle (\r);
  \draw[ultra thin] (0,0) circle (1); %M
\end{scope}
\begin{scope}
  \clip (-\d,0) circle (\r);
  \draw[ultra thin] (\cc1,0) circle (\rr1);
\end{scope}
\draw[ultra thin] (\cc3,0) circle (\rr3);
\begin{scope}
  \clip (-\d,0) circle (\r);
  \draw[ultra thin] (\cc2,0) circle (\rr2);
\end{scope}
\draw[ultra thin] (\cc6,0) circle (\rr6);
\fill[white](\cc5,0) circle (\rr5-0.0006); %green
\begin{scope}
  \clip (-\d,0) circle (\r);
  \draw[ultra thin] (\cc5,0) circle (\rr5); %green
\end{scope}
\fill[white](\cc6,0) circle (\rr6-0.0006);
%\draw[orange,ultra thin] (\cc7,0) circle (\rr7);
%\draw[orange, ultra thin] (\cc8,0) circle (\rr8);
%\draw[orange, ultra thin] (\cc9,0) circle (\rr9);
\draw[ultra thin] (\ccc1,0) circle (\rrr1);
\begin{scope}
  \clip (-\ccc1,0) circle (\rrr1);
 \draw[ultra thin] (\ccc3,0) circle (\rrr3);%blue
\end{scope}
\fill[white](\ccc4,0) circle (\rrr4-0.0006);
\fill[white](\ccc5,0) circle (\rrr5-0.0006);
\fill[white](\ccc3,0) circle (\rrr3-0.0006);
\draw[line width=0.09mm] (\geodesicc,0) circle(\geodesicr);
\fill[white](\cc5,0) circle (\rr5-0.0006);

\begin{scope}
  \clip (-\ccc1,0) circle (\rrr1);
 \draw[ultra thin] (\ccc2,0) circle (\rrr2);%purple
\end{scope}
\begin{scope}
\clip (\ccc2,0) circle (\rrr2);
\fill[black, opacity=0.2] (\ccc3,0) circle (\rrr3-0.0006);
\end{scope}
\begin{scope}
\clip (\ccc3,0) circle (\rrr3-0.0006);
\fill[white,] (\geodesicc,0) circle (\geodesicr-0.0006);
\end{scope}

\fill[white](\ccc4,0) circle (\rrr4-0.0006);
\fill[white](\ccc5,0) circle (\rrr5-0.0006);
%\draw[ultra thin] (\ccc4,0) circle (\rrr4);
%\draw[ultra thin] (\ccc5,0) circle (\rrr5);
%\draw[ultra thin] (\ccc6,0) circle (\rrr6);
%\draw[ultra thin] (\ccc7,0) circle (\rrr7);
%\draw[ultra thin] (\ccc8,0) circle (\rrr8);
%\draw[ultra thin] (\ccc9,0) circle (\rrr9);
\draw[line width=0mm, decoration={markings,
      mark=between positions 0 and 1 step 14pt with {\arrowreversed{to}}},
    postaction={decorate}] (\geodesicc,0) circle(\geodesicr);
%\node (A) at (-0.7629592749, 0.8897557665) {.};
%\node (A) at (-0.5466175727 , 0.03621932697) {.};
%\node (A) at (-0.05628855105, 0.1780000260) {.};
\begin{scope}
\clip (\ccc2,0) circle (\rrr2);
\clip (\ccc3,0) circle (\rrr3);
\draw[ultra thin] (\ccc5,0) circle (\rrr5);
\end{scope}
  \fill[white] (-3,-0.3) rectangle (3,0);
  \draw[line width=0mm, gray] (-3,0) -- (3,0);
  \draw[line width=0mm, gray] (0,0) -- (0,0.03);
    \draw[line width=0mm, gray] (-1,0) -- (-1,0.03);
      \draw[line width=0mm, gray] (1,0) -- (1,0.03);
        \draw[line width=0mm, gray] (-2,0) -- (-2,0.03);
  \node at (0,0) [below] {{\tiny\color{gray} $ 0$}};
    \node at (-1,0) [below] {{\tiny\color{gray} $ -1$}}; 
      \node at (-2,0) [below] {{\tiny\color{gray} $ -2$}}; 
      \node at (1,0) [below] {{\tiny\color{gray} $ 1$}};
        \node at (-1,2) {$\mathcal{P}_1$};
        \node at (0,1.2) {$\mathcal{P}_2$}; 
        \node at (0,0.8) {$\mathcal{P}_3$};
        \node at (-0.12,0.53) {\tiny $\mathcal{P}_4$}; 
        \node at (0,0.35) {\tiny $\mathcal{P}_5$};
        \node at (0,0.2) {\scalebox{.5}{$\mathcal{P}_6$}};
        \node at (-0.03,0.14) {\scalebox{.26}{$\mathcal{P}_7$}};
        \node at (0,0.1) {\scalebox{.3}{$\mathcal{P}_8$}};
       
%\drrw (\c,0) circle(\rr);
\end{tikzpicture}

\caption{Polygons (shaded) obtained from the closed geodesic $\CC_3$ when $\Gamma=\Gamma_\triangle$ and $\mathcal{F}=\mathcal{F}_\triangle$.}\label{fig:2}
\end{figure}
\begin{figure}
\begin{tikzpicture}[scale=3]
\clip (-1.1,-0.2) rectangle (1.1,2);
\fill[white] (-3,-1) rectangle (3,3);

 \begin{scope}
      \clip (-\d,0) circle(\r);
      \clip (\d,0) circle(\r);
      \draw[ultra thin] (\d,0) circle(\r);
          \draw[ultra thin] (-\d,0) circle(\r);
      \end{scope}
      \begin{scope}
      \clip (\d,0) circle(\r);
      \draw[ultra thin] (-\d,0) circle(\r);
      \fill[black, opacity=0.85] (-\d,0) circle(\r);
      \end{scope}

      \begin{scope}
      \clip (\d,0) circle(\r);
       \clip (-\d,0) circle(\r);
      \fill[white , opacity=0.4] (\ccccc2,0) circle(\rrrrr2);
      \fill[white , opacity=0.25] (\ccccc4,0) circle(\rrrrr4);
      \fill[white , opacity=0.3] (\ccccc6,0) circle(\rrrrr6);
      \fill[white , opacity=0.5] (\ccccc8,0) circle(\rrrrr8);
      \fill[white , opacity=0.4] (-\ccccc2,0) circle(\rrrrr2); %second smallest
      \fill[white , opacity=0.25] (-\ccccc4,0) circle(\rrrrr4);
      \fill[white , opacity=0.3] (-\ccccc6,0) circle(\rrrrr6);
      \fill[white , opacity=0.5] (-\ccccc8,0) circle(\rrrrr8); %smallest
      
      \draw[] (\ccccc2,0) circle(\rrrrr2);
      \draw[] (\ccccc4,0) circle(\rrrrr4);
      \draw[] (\ccccc6,0) circle(\rrrrr6);
      \draw[] (\ccccc8,0) circle(\rrrrr8);
      \draw[] (-\ccccc2,0) circle(\rrrrr2);
      \draw[] (-\ccccc4,0) circle(\rrrrr4);
      \draw[] (-\ccccc6,0) circle(\rrrrr6);
      \draw[] (-\ccccc8,0) circle(\rrrrr8); 
      
      \end{scope}
     \begin{scope}
      \clip (-\d,0) circle(\r);
\draw[ultra thin] (\d,0) circle(\r);
      \end{scope}
        \begin{scope}
      \clip (\d,0) circle(\r);
      
\draw[ultra thin] (-\d,0) circle(\r);

      \end{scope}
\fill[white] (0,0) circle(1);
  \begin{scope}
      \clip (\d,0) circle(\r);
      \clip (-\d,0) circle(\r);
      \draw[ultra thin] (0,0) circle(1);

      \end{scope}
   \draw[line width=0mm,gray] (-2,0) -- (2,0);
 
  \node at (0,0) [below] {{\tiny\color{gray} $ 0$}};
    \node at (-1,0) [below] {{\tiny\color{gray} $ -1$}}; 
      \node at (1,0) [below] {{\tiny\color{gray} $ 1$}};
      \draw[line width=0mm, gray] (0,0) -- (0,0.03);
    \draw[line width=0mm, gray] (-1,0) -- (-1,0.03);
      \draw[line width=0mm, gray] (1,0) -- (1,0.03);
\end{tikzpicture}
\caption{Partial covering obtained from the pair  $(\mathcal{C}_3,\mathcal{F}_\triangle)$.}\label{fig:3}
\end{figure}  
\end{example}
 \subsection{Comparing with the construction of Duke--Imamo\={g}lu--T\'{o}th}\label{sec:comparing}  Now we restrict to $\Gamma=\PSL_2(\Z)$ and denote by $Y_0(1):=\PSL_2(\Z)\backslash \Hb$ the modular surface. In this setting, Duke--Imamo\={g}lu--T\'{o}th \cite{DuImTo} associated to an oriented closed geodesic $\CC_A$ with $A\in \Cl_K^+$, the narrow class group of a real quadratic field, a partial covering of $Y_0(1)$ with boundary given by the corresponding closed geodesic as in \cite{Sarnak82}. This was obtained by associating to $A$ a certain thin subgroup of $\Gamma_A\leq \Gamma$ with convex core (or Nielsen region) $\NN_A\subset \Hb$ and then considering the projection of $\Gamma_A\backslash \NN_A$ to $Y_0(1)$ (counted with multiplicity), which in our terminology corresponds to
\begin{equation}\label{eq:FFA}\FF_A \in \mathrm{Partial}(Y_0(1)),\end{equation}
where $\FF_A\subset \NN_A$ is a fundamental polyogon for $\Gamma_A\backslash \NN_A$ considered as a partial covering of $Y_0(1)$ via (\ref{eq:BBsubset}). We refer to \cite[Sec.\ 3]{DuImTo} for details. %This turns out to be a special case of our painting construction using the fundamental polygon with vertices $\{i, \tfrac{1+i\sqrt{3}}{2},1+i,\infty\}$:
%\begin{equation}\label{eq:Fstd}\FF_0:=\{z\in \Hb: |z|> 1, |z-1|> 1,0< \Re z< 1\},\end{equation}
This turns out to be a special case of our construction using the following fundamental polygon for $\PSL_2(\Z)$:
\begin{equation}\label{eq:Fstd}\FF_0:=\{z\in \Hb: |z-1|> 1,0< \Re z< 1/2\},\end{equation}%which has side pairing elements given by $\{T, T^{-1},TS, ST^{-1}\}$. 
 with vertices $\{0, \tfrac{1+i\sqrt{3}}{2},\infty\}$ and side pairing transformations $\mathrm{sp}(\PSL_2(\Z),\FF_0)=\{S,TS, ST^{-1}\}$. We encourage the reader to consult the informative illustrations \cite[Fig. 1-3]{DuImTo} for some  useful pictures to have in mind.  
  \begin{lemma}\label{lem:DITequiv} Let $K/\Q$ be a real quadratic extension and let $A\in \Cl_K^+$. Then it holds that    
$$  \FF_A=\PP(\CC_A,\FF_0)+nY_0(1)\in \mathrm{Partial}(Y_0(1)),$$
for some $n\in \Z_{\geq 0}$.  Here we consider $\FF_A$ as a partial covering of $Y_0(1)$ via the convention (\ref{eq:BBsubset}).   
  \end{lemma}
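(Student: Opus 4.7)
The strategy is to exhibit an explicit decomposition of the Nielsen region $\NN_A$ as the disjoint union of the ``painting fragments'' $\PP_i$ along the lift $\tilde{\CC}_A$ together with a finite collection of complete tessellation cells $g\FF_0$, after which the latter projects to $Y_0(1)$ as a constant partial covering.

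First I would unpack the construction in \cite{DuImTo}. Let $\tilde{\CC}_A\subset\tilde{S}_A$ be the chosen lift, where $\tilde{S}_A$ is the axis of the primitive hyperbolic element $\gamma_A\in\PSL_2(\Z)$ corresponding to $\CC_A$, and let $\tilde{B}$ be the associated half-plane (\ref{eq:tildeB}). By the explicit geometric description in \emph{loc.\ cit.}, $\NN_A$ is a convex region of $\tilde{B}$ bounded on one side by $\tilde{S}_A$ and on the other by a $\Gamma_A$-invariant geodesic ``staircase'' each of whose sides is a full side of some cell of the tessellation $\{g\FF_0 : g\in\Gamma\}$. Consequently, every translate $g\FF_0$ is either contained in $\NN_A$, disjoint from the interior of $\NN_A$, or crossed by $\tilde{S}_A$ with $\NN_A\cap g\FF_0 = \tilde{B}\cap g\FF_0$; the ``crossed'' cells whose interior meets $\tilde{\CC}_A$ are precisely the $\gamma_1\FF_0,\ldots,\gamma_m\FF_0$ appearing in (\ref{eq:PPdef}), contributing the fragments $\PP_i=\tilde{B}\cap\gamma_i\FF_0$.

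Next I would choose a fundamental polygon for $\Gamma_A\backslash\NN_A$ adapted to this cell structure. Since $\gamma_A$ translates $\tilde{S}_A$ by $|\CC_A|$ and thereby permutes the crossed cells, while the remaining cosets in $\Gamma_A$ permute the interior cells into finitely many orbits, one may take
\[
\FF_A \;=\; \bigsqcup_{i=1}^{m}\PP_i \;\sqcup\; \bigsqcup_{j=1}^{n} g_j\FF_0
\]
for suitable $g_1,\ldots,g_n\in\Gamma$ representing the $\Gamma_A$-orbits of interior cells. Passing to $\mathrm{Partial}(Y_0(1))$ via (\ref{eq:BBsubset}) and the additivity of the partial covering construction, this yields
\[
\FF_A \;=\; \sum_{i=1}^{m}\PP_i \,+\, \sum_{j=1}^{n} g_j\FF_0 \;=\; \PP(\CC_A,\FF_0) \,+\, n\cdot Y_0(1),
\]
where the last equality uses that each $g_j\FF_0$, being a $\Gamma$-translate of the fundamental polygon $\FF_0$, represents the constant partial covering $1\in\mathrm{Partial}(Y_0(1))$ up to a measure-zero boundary contribution (which is absorbed in the definition of the monoid).

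The main obstacle is the geometric input in the first step: verifying that the staircase boundary of $\NN_A$ really consists of full sides of the tessellation by $\FF_0$, so that no interior cell is partially truncated and the decomposition above is genuinely by whole cells. This is the content of DIT's construction of $\Gamma_A$ via the coding of $\tilde{\CC}_A$ through the tessellation by $\FF_0$; once it is invoked, the remaining combinatorics of orbit representatives and the handling of boundary measure-zero subtleties are routine.
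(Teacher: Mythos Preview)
Your overall strategy --- decompose a fundamental domain for $\Gamma_A\backslash\NN_A$ into the painting fragments $\PP_i$ plus finitely many whole $\FF_0$-cells --- is the right one and is what the paper does. But the geometric claim you rest everything on is false. The boundary $\partial\NN_A$ is \emph{not} a staircase of tessellation sides: since $\Gamma_A\backslash\NN_A$ has $\CC_A$ as its unique boundary geodesic, $\partial\NN_A$ is the full $\Gamma_A$-orbit of the axis $\tilde S_A$, and these are geodesics with quadratic-irrational endpoints, never lying on the $\FF_0$-skeleton. Your trichotomy for cells $g\FF_0$ therefore fails: a cell can be cut by some $\gamma\tilde S_A$ with $\gamma\in\Gamma_A\setminus\{1\}$, giving $\NN_A\cap g\FF_0\subsetneq\tilde B\cap g\FF_0$, and nothing you say rules out a cell meeting two boundary components. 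What you are remembering from \cite{DuImTo} is their fundamental domain for $\Gamma_A$ acting on all of $\Hb$ --- the strip $\{0<\Re z<m\}$ with half-disks $B_j=\{|z-m_j|\le1\}$ removed, the $m_j$ coming from the minus continued fraction --- and it is \emph{that} domain whose boundary sits on the tessellation skeleton, not $\NN_A$.

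The paper works through this explicit domain. After excising from it the ``cap'' regions $\{z+m_j:|z|>1,\ |\Re z|<\tfrac12\}$ above each $B_j$, the remainder is genuinely tiled by $\FF_0$-translates, each of which lies entirely inside the fundamental domain; for those tiles $\gamma_i\FF_0\cap\FF_A=\PP_i$ as desired. But each cap is a translate of the \emph{standard} fundamental domain, not a single $\FF_0$-cell: the two $\FF_0$-cells meeting it protrude into the removed disk $B_j$, so neither is wholly contained in the domain. The paper then shows that the two resulting half-pieces of $\FF_A$ inside each cap glue, via $T^{-m_j}ST^{m_j}$ (and $ST^m$, $T^{-m}S$ at $j=0,\ell$), into a single polygon of the shape (\ref{eq:PPi}), with the orientations matching. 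This cap analysis is precisely the ``main obstacle'' you name and then set aside as routine; it is in fact the content of the proof, and it does not follow from any general statement about the coding of $\tilde\CC_A$.
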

\begin{proof}
It follows from \cite[Sec. 3]{DuImTo} that the thin group $\Gamma_A$ has a fundamental domain of the shape: 
\begin{align}\label{eq:DITfd} \{z\in \Hb: 0< \Re z < m\}\setminus \left(\cup_{j=0}^\ell B_j\right),\quad B_j=\{z\in \Hb: |z-m_j|\leq 1\},\end{align}
where $m_0=0$, $m_j=\sum_{k=1}^j n_k$ for $j>0$ and $m=m_\ell$ with $\omega=[\overline{n_1,\ldots , n_\ell}],n_j\geq 2$ the \emph{minus continuous fraction} of the attracting endpoint $\omega$ of the two-sided infinite geodesic containing (a \emph{reduced} lift of) $\CC_A$. The intersection between the domain (\ref{eq:DITfd}) and the convex core $\NN_A$ yields a fundamental domain $\FF_A$ for $\Gamma_A\backslash \NN_A$ whose (oriented) boundary $\tilde{\CC}_A$ projects to $\CC_A\subset Y_0(1)$. Now we write the boundary as a concatenation: 
$$\tilde{\CC}_A= \LL_1\ast \cdots \ast \LL_n=(\partial\NN_A)\cap \overline{\FF_A},$$
where each of the oriented geodesic segment $\LL_i$ is contained in a single $\Gamma$-translate of $\FF_0$ with its endpoints on the boundary. Note that if we remove the regions 
\begin{equation}\label{eq:regionsremoved}\left\{z+m_j\in \Hb: -1/2< \Re z< 1/2,|z|>1\right\},\end{equation} 
above each $B_j$ from (\ref{eq:DITfd}), then the remaining region $\PP_\mathrm{remain}$ is tiled by $\FF_0$. If $\LL_i$ is contained in $\PP_\mathrm{remain}$ then the $\Gamma$-translate $\gamma_i\FF_0$ containing $\LL_i$ is clearly contained in the fundamental domain (\ref{eq:DITfd}) and contains no other $\LL_{i'}$. From this we conclude that $\gamma_i\FF_0\cap \FF_A$ is exactly a polygon of the shape (\ref{eq:PPi}) (defined relative to $\CC_A$ and $\FF_0$), noticing that the orientations indeed match up. Now if $\LL_i$ is contained in one of the regions (\ref{eq:regionsremoved}) then by construction $\LL_i$ intersects $\left\{z+m_j\in \Hb: |z|=1\right\}$ and there is another $\LL_{i'}$ also contained in the same region (here we identify $B_0$ and $B_\ell$ via $T^m\in \Gamma_A$), see \cite[eq.\ (3.6)]{DuImTo}. It follows that by gluing the two halves: $$\FF_A\cap \{z+m_j\in \Hb: 0< \pm \Re z < 1/2, |z|>1\},$$ by applying $T^{-m_j}ST^{m_j}$ for $j\neq 0,\ell$ and $ST^m, T^{-m}S$ for $j=0,\ell$, we obtain another polygon defined in terms of ``painting to the left'' relative to $\FF_0$, i.e.\ of the shape (\ref{eq:PPi}). This corresponds exactly to reversing the ``cutting up'' of the closed geodesic as in \cite[Figure 7]{DuImTo}. This yields the wanted conclusion.          
\end{proof}
Notice that in general one might not be able to glue together $\Gamma$-translates of the polygons $\PP_1,\ldots, \PP_m$ (making up the partial covering $\PP(\CC,\FF)$) to make a convex polygon inside $\Hb$ (see e.g. Figure \ref{fig:2}). From this point of view, the extra copies of the fundamental polygon $\FF_0$ in the partial covering defined by Duke--Imamo\={g}lu--T\'{o}th are there exactly to make this happen. 

In some cases the partial coverings defined in this paper do actually glue up to a convex polygon, as we will see in the next example.  
\subsection{Comparing with the construction of Humphries--Nordentoft}\label{sec:humnor}
In this section we will sketch how the partial coverings obtained in \cite{HumphriesNordentoft22} are also a special case of the construction of this paper. We will briefly recall the setting and refer to \emph{loc. cit.} for details. Let $q$ be prime, let $\Gamma_0(q)$ be the Hecke congruence subgroup of level $q$ and put $Y_0(q):=\Gamma_0(q)\backslash \Hb$. Let $\FF_q$ be a \emph{special fundamental polygon} of $\Gamma$ in the sense of Kulkarni \cite{Kulkarni91}, i.e.\ a fundamental polygon with a minimal number of sides. Let $K/\Q$ be a real quadratic field in which $q$ splits and let $A\in \Cl_K^+$ be an element of the narrow class group. Peter Humphries and the first named author defined in \cite{HumphriesNordentoft22} a thin subgroup $\Gamma_A(q)\subset \PSL_2(\Z)$ with convex core  $\NN_A(q)$ and an explicit fundamental polygon $\FF_A(q)\subset \Hb$ for $\Gamma_A(q)\backslash \NN_A(q)$. One obtains a partial covering by considering the image (counted with multiplicity) of $\FF_A(q)$ under $\pi_{\Gamma_0(q)}:\Hb\rightarrow Y_0(q)$. This construction is based on the geometric coding with respect to the side pairing transformations of $\FF_q$ of a hyperbolic matrix in the conjugacy class of $\Gamma_0(q)$ corresponding to $\CC_A(q)$. It satisfies that the boundary of $\FF_A(q)$ projects to the closed geodesic $\CC_A(q)\subset Y_0(q)$ associated to $A\in \Cl_K^+$ as defined in e.g. \cite{Darmon94}.
\begin{lemma}\label{lem:HuNoequiv}
 Let $q$ be a prime. Let $K/\Q$ be a real quadratic extension in which $q$ splits and let $A\in \Cl_K^+$. Then it holds that    
$$ \FF_A(q)=\PP(\CC_A(q),\FF_q)+nY_0(q)\in \mathrm{Partial}(Y_0(q)), $$
where $n\in \Z_{\geq 0}$ satisfies $n=0$ unless $\CC_A(q)$ passes through a side of $\FF_q$ labeled by an elliptic element of order three. Here we consider $\FF_A(q)$ as a partial covering of $Y_0(q)$ via the convention (\ref{eq:BBsubset}). 
\end{lemma}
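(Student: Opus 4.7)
The plan is to mirror closely the strategy used in the proof of Lemma \ref{lem:DITequiv}, replacing the explicit continued fraction description of the fundamental domain of $\Gamma_A\backslash \NN_A$ by the geometric coding description of $\FF_A(q)$ given in \cite{HumphriesNordentoft22}. First I would recall from \emph{loc.\ cit.} that $\FF_A(q)$ is constructed as a hyperbolic polygon bounded on one side by a lift $\tilde{\CC}_A(q)\subset \Hb$ of the oriented closed geodesic $\CC_A(q)$ and on the other side by a concatenation of sides of $\Gamma_0(q)$-translates of the special fundamental polygon $\FF_q$, where the precise sequence of translates is determined by the word in the side-pairing generators of $\FF_q$ obtained from the geometric coding of the hyperbolic matrix conjugate to $\gamma_A(q)\in \Gamma_0(q)$.

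Next, I would decompose the boundary geodesic as a concatenation $\tilde{\CC}_A(q)=\LL_1\ast\cdots\ast\LL_n$, where each $\LL_i$ is the maximal subsegment contained in a single $\Gamma_0(q)$-translate $\gamma_i\FF_q$. By construction the word arising from the geometric coding matches exactly the sequence of translates entering the construction of $\FF_A(q)$, so that the region of $\FF_A(q)$ lying inside $\gamma_i\FF_q$ is precisely the intersection of $\gamma_i\FF_q$ with the interior (relative to the orientation of $\tilde{\CC}_A(q)$) of the infinite geodesic containing $\tilde{\CC}_A(q)$. In the terminology of Section \ref{sec:PC}, this intersection is exactly the polygon $\PP_i$ defined by (\ref{eq:PPi}). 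Summing over $i$ and projecting to $Y_0(q)$ then gives the equality $\PP(\CC_A(q),\FF_q)$ on the nose in the generic case, with orientations matching because the geometric coding respects the orientation of $\CC_A(q)$.

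The main obstacle will be the discrepancy arising when $\tilde{\CC}_A(q)$ crosses a side of some $\gamma_i\FF_q$ that is labelled by an elliptic side-pairing of order three. At such a side, the intersection of $\CC_A(q)$ with the local neighborhood of the elliptic fixed point interacts with the side-pairing in a way that, for the geometric coding to produce a well-defined word, requires two adjacent translates $\gamma_i\FF_q$ and $\gamma_{i+1}\FF_q$ sharing this side to be merged; the construction of $\FF_A(q)$ in \cite{HumphriesNordentoft22} (paralleling the role of the excised regions (\ref{eq:regionsremoved}) in the proof of Lemma \ref{lem:DITequiv}) then effectively includes an additional fundamental domain's worth of area at each such crossing, obtained by applying the order-three elliptic element to fill in the complementary wedge. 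I would verify, by a local computation around the elliptic fixed point, that the resulting excess is exactly a disjoint union of translates of $\FF_q$ covering $Y_0(q)$ with uniform multiplicity, so that the surplus contributes precisely $nY_0(q)\in \mathrm{Partial}(Y_0(q))$ for some $n\in \Z_{\geq 0}$, and that $n=0$ whenever $\CC_A(q)$ does not pass through any such order-three side.

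Finally, I would assemble these local contributions globally: away from the order-three elliptic sides the equality $\FF_A(q)=\PP(\CC_A(q),\FF_q)$ holds in $\mathrm{Partial}(Y_0(q))$ piece by piece, and the order-three contributions add the stated $nY_0(q)$ term. The hardest part is the careful bookkeeping near the order-three elliptic points, since one must check both that the orientations of $\tilde{\CC}_A(q)$ across the shared side are compatible with the wedge filling and that no further discrepancies arise at order-two sides (which, being their own inverses, pair up cleanly with the ``painting to the left'' construction).
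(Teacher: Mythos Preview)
Your proposal is correct and follows essentially the same approach as the paper: decompose the geodesic boundary of $\FF_A(q)$ into its intersections with $\Gamma_0(q)$-translates $\gamma_i\FF_q$, identify each piece with the polygon $\PP_i$ from (\ref{eq:PPi}), and isolate the discrepancy at order-three elliptic sides. The paper's own proof is in fact briefer than your outline --- it simply notes that $\FF_A(q)$ contains each $\PP_i$ by construction, equals their union when no order-three side is crossed, and is strictly larger when the geodesic exits ``to the right'' after such a crossing --- so your plan to carry out a local computation at the elliptic fixed points and to check order-two sides explicitly would, if anything, fill in detail that the paper leaves implicit.
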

\begin{proof}
By the construction of the fundamental domain of $\FF_A(q)$ in \cite[Sec. 3.2.1]{HumphriesNordentoft22} the boundary component of $\partial \FF_A(q)$ corresponding to $\CC_A(q)$ consists of a single geodesic segment $\tilde{\CC}\subset \Hb$. Let $\tilde{\mathcal{S}}$ denote the oriented (two-sided) infinite geodesic containing $\tilde{\CC}$ (i.e.\ the axis). For each intersection of $\tilde{\CC}$ with a $\Gamma_0(q)$-translate $\gamma_i\FF_q$ we see that by construction $\FF_A(q)$ contains the intersection of $\gamma_i\FF_q$ with the interior of $\tilde{\mathcal{S}}$. These are exactly the polygons as defined in (\ref{eq:PPi}) using the lift  $\tilde{\CC}$. Furthermore, $\FF_A(q)$ equals the union of all of these polygons in the case where $\CC_A(q)$ does not pass through a side of  $\FF_q$ with an elliptic label of order $3$. This yields the wanted equality. Note that when $\CC_A(q)$ does pass through an order $3$ side and then afterwards exits the $\Gamma$-translate of $\FF_q$ ``to the right'', then  $\PP(\CC_A(q),\FF_q)$ will be strictly smaller than $\FF_A(q)$. \end{proof}
\subsection{Equidistribution of partial coverings}
Recall the definition of \emph{covering evenly} from eq.\ (\ref{eq:coverevenly}) of the introduction:
\begin{defi} We say that a sequence $\PP_1,\PP_2,\ldots $  of partial coverings of $Y_\Gamma$ with positive volumes \emph{cover $Y_\Gamma$ evenly} if for any continuous and bounded function $\phi:Y_\Gamma\rightarrow \C$ it holds that 
\begin{equation}\label{eq:coverevenlydef}\frac{\int_{\PP_n}\phi}{\vol(\PP_n)}\rightarrow  \frac{\int_{Y_\Gamma} \phi}{\vol(\Gamma)},\quad \text{as }n\rightarrow \infty. \end{equation}
\end{defi}

\begin{lemma}\label{lem:Zelditch}
It suffices to check the convergence (\ref{eq:coverevenlydef}) for every smooth and compactly supported function $\phi:Y_\Gamma\rightarrow \C$.
\end{lemma}
\begin{proof} This follows from the arguments in the proof of \cite[Cor.\ 6.5]{Zelditch92}: Assume that (\ref{eq:coverevenlydef}) holds for all smooth and compactly supported functions. Then by density in the $L^\infty$-norm we conclude that (\ref{eq:coverevenlydef}) holds for all continuous and compactly supported functions. Now let $\phi$ be continuous and bounded. Then by multiplying by smooth bump functions of exhausting support and estimating the differences using boundedness, equidistribution and that we are dealing with probability measures, one gets the wanted. We will skip the details.\end{proof}
In order to compare to the results in \cite{DuImTo} and \cite{HumphriesNordentoft22}, we recall the formulation of equidistribution in terms of \emph{continuity sets}, i.e.\ measurable sets $\BB\subset Y_\Gamma$ such that $\mu(\partial \BB)=0$. Recall the notation from eq. (\ref{eq:volumeintersection}).
\begin{lemma}\label{lem:B}
A sequence of partial coverings $\PP_1,\PP_2,\ldots$ with positive volumes cover $Y_\Gamma$ evenly if and only if for every fixed continuity set $\mathcal{B}\subset Y_\Gamma$ it holds that 
$$\frac{\vol(\PP_n \cap \mathcal{B})}{\vol(\PP_n)}\rightarrow \frac{\vol(\mathcal{B})}{\vol(\Gamma)},\quad \text{as }n\rightarrow \infty.$$
\end{lemma}
\begin{proof}
Let $\mathcal{B}\subset Y_\Gamma$ be a continuity set. Then by a standard approximation argument we can find for every $\eps>0$ smooth functions $\phi_1,\phi_2:Y_\Gamma\rightarrow [0,1]$ such that $\phi_1 \leq \mathbf{1}_\mathcal{B} \leq \phi_2$ and $0\leq \mu(\phi_2-\phi_1)\leq \eps$. This shows one implication by monotonicity. Similarly, it is standard to show that the span of indicator functions of continuity sets are dense in $L^\infty(Y_\Gamma)$, which contains the space of smooth and compactly supported function on $Y_\Gamma$. Thus the other implication follows from Lemma \ref{lem:Zelditch} above. 
\end{proof}

\section{Volumes of partial coverings}\label{sec:lowerbound} As above let $\FF$ be a fundamental polygon a discrete and cofinite subgroup $\Gamma\leq \PSL_2(\R)$. A key step in our proof will be a lower bound for the volume of $\PP(\CC,\FF)$ as well as a non-escape of mass result in the non-compact case. In both \cite{DuImTo} and \cite{HumphriesNordentoft22} the volume estimates were achieved for individual geodesics $\CC$ using number theoretic input. Note that the non-escape of mass was not explicitly addressed in either paper but was implicit in the treatment of the Eisenstein spectrum (requiring a regularization procedure in the setting of \cite{HumphriesNordentoft22}). For general discrete and cofinite Fuchsian groups these arguments do not seem to be available. Instead we will obtain lower bounds \emph{on average} using equidistribution of the (oriented) closed geodesics in the unit tangent bundle combined with geometric considerations.
\subsection{Lower bound for the volume}
In this section we will obtain lower bounds for the volumes of partial coverings associated to equidistributing collections of closed geodesics. The basic idea is quite simple: equidistribution ensures that the geodesics cannot spend too much time close to the boundary of a(ny) fundamental polygon. In the following we make this idea quantitative. 

As above let $G=\PSL_2(\R)$ and consider the canonical projection 
\begin{equation}p:G\rightarrow G/\mathrm{PSO}_2\cong \Hb.\end{equation}
Given two non-empty, connected, open subsegments $I,J\subset \partial\FF$ of the boundary of $\FF$, we define the following subset of $G$:
$$\mathcal{G}(I,J):=\{g\in G: \exists \ell_1,\ell_2>0\text{ s.t. }p(ga_{-\ell_1})\in I, p(ga_{\ell_2})\in J\}\subset p^{-1}(\FF),$$
where the last inclusion follows by convexity of $\FF$.
\begin{lemma} If $I,J\subset \partial\FF$ are not both contained in a single edge of $\FF$ then 
$$\mu_{G}(\mathcal{G}(I,J))>0.$$
\end{lemma}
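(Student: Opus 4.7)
The strategy is to exhibit an open subset of $G$ contained in $\mathcal{G}(I,J)$; since $\mu_G$ is Haar measure (hence positive on non-empty opens), this gives the claim. Concretely, I will produce a single element $g_0 \in \mathcal{G}(I,J)$ and then argue by a transversality/continuity argument that an entire open neighborhood of $g_0$ lies in $\mathcal{G}(I,J)$.

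First, the construction of $g_0$. Using the hypothesis that $I$ and $J$ are not both contained in a single edge of $\FF$, I pick edge-interior points $z_1 \in I$ and $z_2 \in J$ lying on distinct edges $E_1 \neq E_2$ of $\FF$ (edges being geodesic segments supported by distinct hyperbolic geodesics, by the definition given after eq.~(\ref{eq:bulk})). Since $\FF$ is convex, the geodesic segment from $z_1$ to $z_2$ lies in $\overline{\FF}$; since $E_1$ and $E_2$ are supported by distinct geodesics, this segment cannot coincide with either edge, and so it enters the interior of $\FF$. Let $z_0$ be an interior point of this segment and let $g_0 \in G$ be any lift with $p(g_0) = z_0$ whose associated tangent direction points from $z_1$ toward $z_2$. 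Then there exist $\ell_1^0, \ell_2^0 > 0$ with $p(g_0 a_{-\ell_1^0}) = z_1 \in I$ and $p(g_0 a_{\ell_2^0}) = z_2 \in J$, so $g_0 \in \mathcal{G}(I,J)$.

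Second, I argue openness. The geodesic $t \mapsto p(g_0 a_t)$ meets the edge $E_1$ at $z_1$ transversally: indeed, if the geodesic were tangent to $E_1$ at $z_1$ it would coincide with the supporting geodesic of $E_1$ (two geodesics sharing a point and a direction are equal), contradicting the fact that it immediately enters the interior of $\FF$. By smoothness of the map $(g,t) \mapsto p(g a_t)$ and the inverse function theorem applied at $(g_0, -\ell_1^0)$, for every $g$ in a sufficiently small neighborhood of $g_0$ in $G$ there exists $\ell_1 > 0$ close to $\ell_1^0$ such that $p(g a_{-\ell_1})$ lies on the geodesic supporting $E_1$ and is close to $z_1$; since $I$ is open and $z_1$ is an interior point of $I$, we may shrink the neighborhood so that this intersection point lies in $I$. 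The same argument at $z_2$ on $E_2$ produces, after a further shrinking, an $\ell_2 > 0$ with $p(g a_{\ell_2}) \in J$. Hence a whole open neighborhood of $g_0$ sits inside $\mathcal{G}(I,J)$, so $\mu_G(\mathcal{G}(I,J)) > 0$.

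The only real subtlety is ensuring the transversal intersection at the chosen boundary points (so that the inverse function theorem applies and small perturbations of $g_0$ still hit $I$ and $J$ at nearby times); this is guaranteed by picking $z_1, z_2$ in the interiors of distinct edges and using convexity of $\FF$ to push the connecting geodesic into the interior. No other obstacle arises.
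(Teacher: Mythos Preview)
Your proof is correct and follows essentially the same route as the paper's: show that $\mathcal{G}(I,J)$ contains a non-empty open set and invoke that Haar measure has full support. The paper is simply terser --- it asserts directly that $\mathcal{G}(I,J)$ is open ``since $I$ and $J$ are open'' and then uses convexity of $\FF$ to exhibit a geodesic segment in $\FF$ connecting $I$ to $J$, giving non-emptiness. Your explicit transversality argument (the geodesic cannot be tangent to the supporting edge at an edge-interior point, so the implicit function theorem applies) is precisely what makes that openness assertion rigorous, so your version is in fact a more detailed rendition of the same idea rather than a different approach.
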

\begin{proof}
First of all observe that $\mathcal{G}(I,J)$ is open since $I$ and $J$ are open. The assumption that $I,J$ are not contained in a single edge together with the fact that $\FF$ is convex implies that there is some geodesic segment contained in $\FF$ connecting $I$ and $J$. Since $\mathcal{G}(I,J)$ is a non-empty open subset of $G$, the claimed lower bound follows from the fact that $\mu_{G}$ has full support.
\end{proof}
Given $g\in \mathcal{G}(I,J)$ we denote by $\ell_I(g),\ell_J(g)> 0$ the unique real numbers defined by 
$$p(g a_{-\ell_I(g)})\in I,\quad  p(ga_{\ell_J(g)})\in J,$$
and define
%disjoint and the endpoints of $I$ do not lie on the same
$$c_{I,J}:=\frac{\mu_{G}(\mathcal{G}(I,J))}{\max_{g\in \mathcal{G}(I,J)}(\ell_I(g)+\ell_J(g))}\geq 0.$$
Note that $\ell_I$ and $\ell_J$ are both continuous as a function of $g$.
\begin{lemma}
If $I,J\subset \partial\FF$ are not both contained in a single edge of $\FF$ \emph{and} neither the closure of $I$ nor of $J$ contains a cusp of $\Gamma$, then $c_{I,J}>0$.
\end{lemma}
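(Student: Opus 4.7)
The previous lemma already supplies positivity of the numerator: under the hypothesis that $I,J$ are not both contained in a single edge, we have $\mu_{G}(\mathcal{G}(I,J))>0$. So it suffices to show that the denominator
\[
\max_{g\in \mathcal{G}(I,J)}\bigl(\ell_I(g)+\ell_J(g)\bigr)
\]
is finite, i.e.\ that the quantity $\ell_I(g)+\ell_J(g)$ is \emph{uniformly bounded} in $g\in\mathcal{G}(I,J)$.

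For any $g\in\mathcal{G}(I,J)$, the definition gives a geodesic segment
\[
\bigl\{p(ga_t):-\ell_I(g)\leq t\leq \ell_J(g)\bigr\}\subset\Hb,
\]
of hyperbolic length exactly $\ell_I(g)+\ell_J(g)$, whose endpoints lie in $I$ and $J$ respectively and which passes through $p(g)\in\FF$. Since $\FF$ is (hyperbolically) convex, the whole segment lies in $\overline{\FF}$, and its length therefore coincides with the hyperbolic distance in $\Hb$ between its two endpoints, which belong to $\overline{I}$ and $\overline{J}$.

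The decisive step is to exploit the cusp hypothesis. Each side of $\FF$ is a hyperbolic geodesic segment joining two vertices of $\FF$; ordinary vertices lie in $\Hb$, whereas cuspidal vertices lie on $\partial\Hb$. Under the assumption that neither $\overline{I}$ nor $\overline{J}$ contains a cusp of $\Gamma$, both closures are compact subsets of $\Hb$. By continuity of the hyperbolic distance and compactness of $\overline{I}\times\overline{J}$, the quantity
\[
D:=\sup_{z\in \overline{I},\ w\in \overline{J}}d_{\Hb}(z,w)
\]
is finite. Combining this bound with the previous paragraph gives $\ell_I(g)+\ell_J(g)\leq D$ for every $g\in\mathcal{G}(I,J)$, so the maximum (really supremum) in the denominator is finite, and hence
\[
c_{I,J}\geq \frac{\mu_{G}(\mathcal{G}(I,J))}{D}>0.
\]

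The main (and only) obstacle is making sure the cusp hypothesis really does upgrade ``closed subset of $\overline{\FF}$'' to ``compact subset of $\Hb$''; once this is settled, everything else is just convexity of $\FF$ plus the basic fact that the hyperbolic distance is a continuous function on $\Hb\times\Hb$. No spectral or dynamical input is needed.
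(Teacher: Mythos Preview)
Your proof is correct and follows essentially the same approach as the paper: use the previous lemma for positivity of the numerator, then use the cusp hypothesis to ensure that $\overline{I}$ and $\overline{J}$ are compact in $\Hb$, whence the denominator is finite. The paper phrases the compactness step slightly differently (the convex hull of $I\cup J$ is compact in $\overline{\FF}$), but your direct bound via $\sup_{\overline{I}\times\overline{J}} d_{\Hb}$ is the same idea spelled out.
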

\begin{proof}
By the above lemma we know that $\mu_{G}(\mathcal{G}(I,J))>0$. By the assumptions on $I,J$ it follows that the convex hull of $\overline{I\cup J}\subset \Hb$ is a compact subset of the closure of $\FF$. Thus we conclude by continuity of $\ell_I$ and $\ell_J$ that
$$\max_{g\in \mathcal{G}(I,J)}(\ell_I(g)+\ell_J(g))<\infty,$$  
which yields the wanted.
\end{proof}
Given $I,J\subset \partial\FF$ as above, let $z_0\in \overline{I},z_1\in \overline{J}$ denote the endpoints of $I$ and $J$ such that the segment of $\partial\FF$ connecting $z_0$ with $z_1$ in clockwise direction is disjoint from $I\cup J$.  Let $\mathcal{S}\subset \Hb$ be the unique oriented (two-sided) infinite geodesic containing $z_0$ and $z_1$ with endpoints $x_0,x_1\in \mathbf{P}^1(\R)$ and let 
$$\mathcal{B}=\gamma^{-1}\{z\in\Hb: \Re z\geq 0\},\quad\text{ with } \gamma \mathcal{S}=i\R, \gamma x_0=\infty, \gamma x_1=0,$$
be the interior of $\mathcal{S}$ (as defined in eq. (\ref{eq:tildeB})).  Then we get a hyperbolic polygon: 
$$\PP_{I,J}:=\mathcal{B}\cap \FF\subset \Hb,$$
and we put 
$$d_{I,J}:=\vol (\PP_{I,J})\geq 0,$$
suppressing the dependence on $\FF$ in the notation.
\begin{lemma}
Let $I,J$ be disjoint, non-empty, open, connected subsegments of $\partial \FF$ such that neither of the endpoints of $I$ (i.e.\ the boundary of $I$) lies on the same edge as any of the endpoints of $J$. Then $d_{I,J}>0$.
\end{lemma}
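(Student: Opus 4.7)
I would prove $d_{I,J}>0$ by showing that $\PP_{I,J}=\tilde{B}\cap\FF$ has non-empty interior in $\Hb$, which forces its hyperbolic area to be positive. The strategy is to locate a point $p$ on the geodesic segment $[z_0,z_1]\subset S$ lying in the interior of $\FF$. Given such $p$, a sufficiently small open hyperbolic disk $U$ around $p$ with $U\subset\FF$ is cut by $S$ into two open half-disks, each of positive hyperbolic area, and exactly one of them lies in $\tilde{B}$. Then $U\cap\tilde{B}\subset\tilde{B}\cap\FF=\PP_{I,J}$, giving $\vol(\PP_{I,J})>0$.

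The task therefore reduces to showing that $[z_0,z_1]\not\subset \partial\FF$: since $[z_0,z_1]\subset\overline{\FF}$ by hyperbolic convexity of $\FF$, any point of this segment that is not on $\partial\FF$ automatically lies in the interior of $\FF$. To prove $[z_0,z_1]\not\subset\partial\FF$, I would exploit the structure of $\partial\FF$ as a finite union of geodesic edges, each contained in a distinct complete hyperbolic geodesic of $\Hb$. Two distinct complete hyperbolic geodesics meet in at most one point, so any geodesic segment entirely contained in $\partial\FF$ must lie within a single edge. By hypothesis, no edge of $\FF$ contains both an endpoint of $I$ and an endpoint of $J$, and in particular no edge contains both $z_0$ and $z_1$; thus $[z_0,z_1]$ is not contained in any single edge, and hence not in $\partial\FF$. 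The same hypothesis also forces $z_0\neq z_1$, so that $S$ and the segment $[z_0,z_1]$ are well-defined in the first place.

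The main (and essentially only) subtle point is the edge-containment argument above, and I do not anticipate a serious obstacle there: even when $z_0$ or $z_1$ happens to coincide with a vertex of $\FF$, the argument is unaffected, since the relevant claim is about the geodesic segment between them not lying inside a common edge, which follows directly from the hypothesis on the endpoints. Once this is in place, the local positivity of area argument around $p$ is immediate from the fact that the hyperbolic area measure has full support on $\Hb$.
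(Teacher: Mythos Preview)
Your proposal is correct and follows essentially the same approach as the paper. The paper's proof is a terse two-sentence argument: it observes that the hypothesis forces $z_0$ and $z_1$ to lie on different edges of $\FF$, declares $\PP_{I,J}$ ``non-degenerate'', and concludes. Your argument is precisely the unpacking of that non-degeneracy claim---you make explicit why $z_0,z_1$ on different edges prevents $[z_0,z_1]\subset\partial\FF$ (via the single-edge containment fact for geodesic segments in $\partial\FF$) and then extract a half-disk of positive area inside $\PP_{I,J}$.
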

\begin{proof}
By the assumptions on $I,J$ we see that the endpoints $z_0,z_1$ defined above lie on two different edges of $\FF$. Thus $\PP_{I,J}$ is non-degenerate and the conclusion follows. 
\end{proof}
Finally, we define the following key invariant of $\FF$:
$$c(\Gamma,\FF):= \sup_{I,J\subset \partial \FF}c_{I,J}\cdot d_{I,J},$$
where the $\sup$ is taken over pairs $I,J$ of disjoint, non-empty, open, connected subsegments of $\partial \FF$. 
\begin{cor}
For any discrete and cofinite $\Gamma\leq \PSL_2(\R)$ and any fundamental polygon $\FF$ of $\Gamma$ we have 
$$ c(\Gamma,\FF)>0. $$
\end{cor}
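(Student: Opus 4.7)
The plan is to exhibit an explicit pair $(I, J)$ of subsegments of $\partial \FF$ satisfying the hypotheses of all three preceding lemmas simultaneously, so that $c_{I,J} \cdot d_{I,J} > 0$; the definition of $c(\Gamma, \FF)$ as a supremum then gives the conclusion.

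First I would observe that a finite-area convex hyperbolic polygon has at least three edges. Indeed, a ``bigon'' is impossible in $\Hb$ since two distinct points of $\Hb \cup \mathbf{P}^1(\R)$ determine a unique geodesic. Thus we may choose two distinct edges $e_1, e_2$ of $\FF$. Inside each edge I would pick a non-empty open connected subsegment $I \subset e_1$ and $J \subset e_2$ whose closures are contained in the relative interior of the respective edge, that is, avoiding every vertex (elliptic or cuspidal) of $\FF$; this is possible since every edge, being a non-degenerate geodesic segment, has infinitely many interior points.

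Next I would verify the three sets of hypotheses. Since cusps of $\Gamma$ appear as (ideal) vertices of $\FF$, and $\overline{I}, \overline{J}$ lie in the relative interiors of $e_1, e_2$ respectively, neither closure contains a cusp. Also $I$ and $J$ lie in different edges so they are certainly not both contained in a single edge; the second lemma yields $c_{I,J} > 0$. For the third lemma, $I$ and $J$ are disjoint, open, connected, and any endpoint of $I$ lies in the interior of $e_1$ and hence on no other edge, while any endpoint of $J$ lies in the interior of $e_2$ and on no other edge. Consequently no endpoint of $I$ can share an edge with any endpoint of $J$, which gives $d_{I,J} > 0$. Combining, $c(\Gamma, \FF) \geq c_{I,J} \cdot d_{I,J} > 0$.

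I do not expect any real obstacle here; the argument is essentially an existence statement given by a concrete geometric construction. The only subtlety worth flagging is the edge-versus-side distinction recalled in Section \ref{sec:fundamentalpol}: when an order-two side pairing folds a side onto itself, two sides can lie on a common geodesic and form a single edge. This does not affect the argument above because we work with edges throughout, and the interior of each edge is disjoint from every other edge.
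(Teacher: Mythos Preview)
Your proof is correct and follows essentially the same approach as the paper: choose $I,J$ inside two distinct edges, away from vertices, and invoke the preceding lemmas to get $c_{I,J}>0$ and $d_{I,J}>0$. If anything, you are slightly more careful than the paper in requiring the closures of $I,J$ to lie in the relative interiors of their edges (avoiding all vertices, not only cuspidal ones), which cleanly guarantees the endpoint hypothesis of the third lemma.
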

\begin{proof}
Clearly, $\FF$ has at least three distinct edges. Let $I,J$ be non-empty, open, connected subsets of the boundary $\partial \FF$ whose closures are contained in the  interior of two distinct edges of $\FF$. Then clearly the closures of $I$ and $J$ do not contain a cuspidal vertex of $\FF$ and $I$ and $J$ do not have boundary points  on the same edge. Thus we conclude by the above lemmas that indeed $c_{I,J}>0$ and $d_{I,J}>0$, which yields the wanted lower bound.
\end{proof}
We can now prove the following fundamental bound.
\begin{prop}\label{prop:vol}
Assume that $\Cc_1,\Cc_2,\ldots$ equidistribute in the unit tangent bundle $\Gamma\backslash G$, in the sense of Definition \ref{def:equid}. Then for every $\eps>0$ it holds that 
$$\vol(\PP(\Cc_n,\FF))\geq (c(\Gamma,\FF)-\eps+o_{\FF,\eps}(1))  |\Cc_n|,\quad \text{as }n\rightarrow \infty.$$%\sum_{\CC\in \Cc_n}
\end{prop}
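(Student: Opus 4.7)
The plan is to fix, throughout the argument, a pair of disjoint subsegments $I,J\subset\partial\FF$ that nearly realizes the supremum defining $c(\Gamma,\FF)$, and to lower bound $\vol(\PP(\Cc_n,\FF))$ by counting the number of \emph{$(I,J)$-crossings} performed by the geodesics in $\Cc_n$. Here an $(I,J)$-crossing of $\CC\in\Cc_n$ is a translate $\gamma_i\FF$ (as in~\eqref{eq:PPi}) such that the chord $\tilde{\CC}\cap\gamma_i\FF$ enters through $\gamma_i I$ and exits through $\gamma_i J$, with the orientation of $\tilde{\CC}$ compatible with the orientation used to define $\tilde{B}$ and $\PP_{I,J}$. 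Writing $N_{I,J}(\CC)$ for the count of such crossings, the two targets are the geometric bound $\vol(\PP(\CC,\FF))\geq d_{I,J}\cdot N_{I,J}(\CC)$ and the equidistribution bound $\sum_{\CC\in\Cc_n}N_{I,J}(\CC)\geq (c_{I,J}+o(1))\cdot|\Cc_n|$ (up to the $\vol(\Gamma)^{-1}$ factor dictated by Definition~\ref{def:equid}). Taking the supremum over admissible $(I,J)$ at the end yields the proposition.

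For the geometric bound, pull back an $(I,J)$-crossing by $\gamma_i^{-1}$ to obtain a chord in $\FF$ joining some $w_0\in I$ to some $w_1\in J$. By the choice of the distinguished endpoints $z_0\in\overline{I}$ and $z_1\in\overline{J}$ in the definition of $\PP_{I,J}$, the points $z_0,z_1$ lie on the shorter $\partial\FF$-arc joining $w_0$ to $w_1$. Hyperbolic convexity of $\FF$ then implies that the region cut off by $[w_0,w_1]$ on the $\tilde{B}$-side of the chord is a convex subset of $\FF$ containing $\{z_0,z_1\}$, hence the entire chord $[z_0,z_1]$, and therefore the polygon $\PP_{I,J}$. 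Translating by $\gamma_i$, the corresponding polygon $\PP_i$ from~\eqref{eq:PPi} satisfies $\vol(\PP_i)\geq\vol(\PP_{I,J})=d_{I,J}$, and the geometric bound follows by summing over $i$.

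For the counting bound, introduce the $\Gamma$-periodic descent $F_{I,J}:\Gamma\backslash G\to\Z_{\geq 0}$ of $\mathbf{1}_{\mathcal{G}(I,J)}$; since $\mathcal{G}(I,J)\subset p^{-1}(\FF)$ and $p^{-1}(\FF)$ is a fundamental domain, one has $\int_{\Gamma\backslash G}F_{I,J}\,d\mu_G=\mu_G(\mathcal{G}(I,J))$. During each $(I,J)$-crossing the lift of $\CC$ sits in one $\Gamma$-translate of $\mathcal{G}(I,J)$ for a time equal to the chord length $\ell_I(g)+\ell_J(g)\leq M_{I,J}:=\max_{g\in\mathcal{G}(I,J)}(\ell_I(g)+\ell_J(g))$, which yields the elementary inequality $\mu_\CC(F_{I,J})\leq M_{I,J}\cdot N_{I,J}(\CC)$. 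Since the closures of $I$ and $J$ avoid the cusps, $\mathcal{G}(I,J)$ is a precompact open subset of $G$ and one may approximate $\mathbf{1}_{\mathcal{G}(I,J)}$ monotonically from below by smooth compactly supported functions $\phi_k\uparrow\mathbf{1}_{\mathcal{G}(I,J)}$. Applying Definition~\ref{def:equid} to each (the $\Gamma$-periodic descent of each) $\phi_k$ and then sending $k\to\infty$ gives $\sum_\CC\mu_\CC(F_{I,J})\geq(\mu_G(\mathcal{G}(I,J))+o(1))\cdot|\Cc_n|/\vol(\Gamma)$, whence the counting bound follows after division by $M_{I,J}$.

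The main obstacle is the orientation bookkeeping in the geometric step: one must verify that the $\tilde{B}$-side of the chord $[w_0,w_1]$ genuinely is the side containing $\PP_{I,J}$. This requires synchronizing the orientation of $\tilde{\CC}$ (from $\gamma_i I$ to $\gamma_i J$) with the orientation convention in~\eqref{eq:tildeB}; since the supremum in $c(\Gamma,\FF)$ is over ordered pairs $(I,J)$ and $\mathcal{G}(I,J)$ is itself orientation-sensitive, only the crossings compatible with the chosen orientation should be counted by $N_{I,J}$. A secondary technicality—extending equidistribution from smooth test functions to the non-smooth indicator $\mathbf{1}_{\mathcal{G}(I,J)}$—is handled by the monotone smooth approximation above, crucially relying on the precompactness of $\mathcal{G}(I,J)$ to rule out escape of mass.
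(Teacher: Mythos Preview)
Your argument is correct and follows essentially the same route as the paper: count oriented $(I,J)$-crossings of $\partial\FF$, lower bound their number via equidistribution applied to (smoothed) indicators of the open set $\mathcal{G}(I,J)$ divided by the maximal chord length $M_{I,J}$, and lower bound the volume contribution of each crossing by $d_{I,J}$ via convexity of $\FF$. Your treatment is in fact somewhat more careful than the paper's on two points---the smoothing of $\mathbf{1}_{\mathcal{G}(I,J)}$ and the orientation bookkeeping---and you correctly flag the $\vol(\Gamma)^{-1}$ normalization implicit in Definition~\ref{def:equid}, but the underlying idea is identical.
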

\begin{proof}
Let $I,J$ be disjoint, non-empty, connected, open subsegments of the boundary of $\FF$. We will consider $\mathcal{G}(I,J)$ as a subset of $\Gamma\backslash G$ via the inclusion $\mathcal{G}(I,J)\subset p^{-1}(\FF)$. Let 
$$\ell_n:=|\Cc_n|, $$
denote the total geodesic length of the packet $\Cc_n$ and consider a parametrization 
$$x_n:[0,\ell_n]\rightarrow \Gamma\backslash G,$$
of the concatenation of the entire collection $\Cc_n$ with unit speed (which is piecewise geodesic with at most $\#\Cc_n$ many discontinuity points). By the assumption that $(\Cc_n)_{n\geq 1}$ equidistributes in unit tangent bundle $\Gamma\backslash G$, we conclude that 
$$\mu_\mathrm{Borel}(\{ t\in[0,\ell_n]: x_n(t)\in \mathcal{G}(I,J) \})=
(\mu_{G}(\mathcal{G}(I,J))+o_{I,J}(1)) \ell_n,\quad \text{as }n\rightarrow \infty, $$
where $\mu_\mathrm{Borel}$ denotes the standard Borel measure on $\R$. This implies that the total number of times the collection of geodesics $\Cc_n$ (when projected to $\FF$) will connect $I$ and $J$ is at least 
$$\frac{\mu_\mathrm{Borel}(\{ t\in[0,\ell_n]: x_n(t)\in \mathcal{G}(I,J) \})}{\max_{g\in \mathcal{G}(I,J)}(\ell_I(g)+\ell_J(g))}=(c_{I,J}+o_{I,J}(1)) \ell_n,\quad \text{as }n\rightarrow \infty .$$ 
For every geodesic segment of $\Cc_n$ connecting $I$ with $J$, the associated polygon $\PP_i$, as in the definition (\ref{eq:PPdef}) of the partial cover $\PP(\CC,\FF)$, contains $\PP_{I,J}$ by convexity. Thus we conclude for any such $I,J$ that 
$$\vol(\PP(\Cc_n,\FF))\geq (c_{I,J}\cdot d_{I,J}+o_{I,J,\FF}(1))\ell_n,\quad \text{as }n\rightarrow \infty.$$
This implies the wanted lower bound for any $\eps>0$ by the definition of $c(\Gamma,\FF)$.
\end{proof}

\subsection{Non-escape of mass}
In the non-compact case special care has to be given to the parts of the partial coverings high in the cusps. This is a well-known phenomena in dynamical systems (see e.g. \cite[Sec.\ 5]{EinLindMichVenk12}) and is often referred to as \emph{non-escape of mass}. In our case this follows by a geometric argument comparing hyperbolic lengths and volumes. To set it up, let $Y>0$ and define the \emph{horostrip of height $Y$} as
\begin{equation}
\mathcal{H}(Y):= \{z\in \Hb: 0< \Re z< 1, \Im z\geq Y \},
\end{equation}
whose volume is given by
\begin{equation}\label{eq:horostrip}
\vol(\HH(Y))= \int_Y^{\infty}\int_0^1 y^{-2}dxdy=Y^{-1} .
\end{equation}
\begin{lemma}\label{lem:geometric}
Let $\mathcal{S}\subset \Hb$ be an infinite two-sided oriented geodesic and let $\mathcal{B}\subset \Hb$ denote the interior of $\mathcal{S}$ as in (\ref{eq:tildeB}).  Then for $0<\sigma<1$ and $Y\geq 2^{1/(1-\sigma)}$ it holds that 
\begin{equation}\label{eq:claim}\vol(\mathcal{B}\cap \mathcal{H}(Y))\leq 2 |\mathcal{S}\cap\mathcal{H}(Y^{\sigma})|+2 Y^{-(1-\sigma)} \vol(\mathcal{B}\cap \mathcal{H}(Y^{\sigma})).   \end{equation} 
\end{lemma}
\begin{proof}
Note that by a simple geometric consideration the intersection 
\begin{equation}\label{eq:intersection}\mathcal{S}\cap \partial \HH(Y^{\sigma}),\end{equation}
has cardinality less than or equal to two (it is the intersection of a half-circle and a rectangle with a side missing parallel to the diameter). In the case where the intersection (\ref{eq:intersection}) consists of two points we denote these by $z_1=x_1+iy_1$ and $z_2=x_2+iy_2$ where $0\leq x_1<x_2\leq 1$ and when the intersection consists of at most one point we put $x_1=x_2=1$ and $y_1=y_2=Y^{\sigma}$. Now consider the following partition of $\BB\cap \HH(Y^{\sigma})$:
\begin{align}
\nonumber\PP_{1}&:=\BB\cap \HH(Y^{\sigma})\cap \{z\in \Hb: 0\leq \Re z\leq x_1 \},\\
\nonumber \PP_{2}&:=\BB\cap \HH(Y^{\sigma})\cap \{z\in \Hb: x_1\leq \Re z\leq x_2 \},\\
 \PP_{3}&:=\BB\cap \HH(Y^{\sigma})\cap \{z\in \Hb: x_2\leq \Re z\leq 1 \},
\end{align}   
This is illustrated in Figures \ref{fig:escape} and \ref{fig:escape2}.
%and when the set (\ref{eq:intersection}) has cardinality $0$ or $1$ we put $\PP_{j,1}(Y)=\FF_\infty(Y),  \PP_{j,2}(Y)= \PP_{j,3}(Y)=\emptyset$. 
\begin{figure}
\begin{tikzpicture}[scale=2.5]
\clip (-1.3,-0.2) rectangle (1.3,2);
\fill[white] ((-1.3,-0.2) rectangle (1.3,2);
%\fill[white] (-1.1,-0.3) rectangle (1.1,2);
\begin{scope}
  \clip (-1,0.7) rectangle (1,2);
\fill[black,opacity=0.2] (-1,0.7) rectangle (1,2);
%\draw (-0.2,-0.3) circle(1.2);
\end{scope}
\begin{scope}
  \clip (-1,0.7) rectangle (-0.863,2);
\fill[black,opacity=0.2, pattern=north east lines
] (-1,0.7) rectangle (-0.863,2);
%\draw (-0.2,-0.3) circle(1.2);
\end{scope} 
\begin{scope}
  \clip (-0.863,0.7) rectangle (0.463,2);
\fill[black,opacity=0.2, pattern=crosshatch] (-0.863,0.7) rectangle (0.463,2);
%\draw (-0.2,-0.3) circle(1.2);
\end{scope} 
\begin{scope}
  \clip (0.463,0.7) rectangle (1,2);
\fill[black,opacity=0.2, pattern=north west lines] (0.463,0.7) rectangle (1,2);
%\draw (-0.2,-0.3) circle(1.2);
\end{scope}

\begin{scope}
  \clip (-1,0) rectangle (1,2);
\fill[white] (-0.2,-0.3) circle(1.2);
\draw[decoration={markings,
      mark=between positions 0 and 1 step 14pt with {\arrowreversed{to}}},
    postaction={decorate}] (-0.2,-0.3) circle (1.2);
\end{scope}
\draw[very thin] (-0.863,0)--(-0.863,2);%[pattern=north east lines
\draw[very thin] (0.463,0)--(0.463,2);
 \draw[thin] (-1,0)--(-1,2);
  \draw[thin] (1,0)--(1,2);
  \draw[dotted] (1.1,0.7)--(-1.05,0.7);
  \node at (-1.15,0.7) {\scalebox{0.55}{$Y^{\sigma}$}};
  \node at (1.2,0.7) {\scalebox{0.55}{$y_1$}};
 \draw[dotted] (1.05,1.7)--(-1.15,1.7);
  \node at (-1.2,1.7) {\scalebox{0.55}{$Y$}};
%\begin{scope}
 % \clip (-0.5,0) circle(1);
  %\fill[black,opacity=0.2] (-0.273,0) rectangle (1,2);
  %\draw[thick] (-0.273,0)--(-0.273,2);
 % \draw[very thin] (0,0) circle (1);
%\end{scope}
  %\node at (0,0) [below] {{\tiny\color{gray} $ 0$}};
  \node at (0.463,0) [below] {{\tiny\color{gray} $ x_2$}};
   \node at (-0.863,0) [below] {{\tiny\color{gray} $ x_1$}}; 
    \node at (-1,0) [below] {{\tiny\color{gray} $ 0$}}; 
      \node at (1,0) [below] {{\tiny\color{gray} $ 1$}};
      %\draw[very thin] (-1.1,0)--(1.1,0);
      %\draw[line width=0mm, gray] (0,0) -- (0,0.03);
    %\draw[line width=0mm, gray] (-1,0) -- (-1,0.03);
      %\draw[line width=0mm, gray] (1,0) -- (1,0.03);
      %\node at (0.3,1.2) {$\mathcal{P}_{j,1}$}; 
        \node at (-0.93,1) {\scalebox{.5}{$\mathcal{P}_{1}$}};
        \node at (-0.2,1.3) {$\mathcal{P}_{2}$};
        \node at (0.75,1.2) {$\mathcal{P}_{3}$};
         \node at (0.73,0.3) {\tiny $\mathcal{S}$};
%\draw (\c,0) circle(\rr);
\end{tikzpicture}\qquad
\begin{tikzpicture}[scale=2.5]
\clip (-1.3,-0.2) rectangle (1.2,2);
\fill[white] ((-1.3,-0.2) rectangle (1.2,2);
%\fill[white] (-1.1,-0.3) rectangle (1.1,2);
\begin{scope}
  \clip (-1,0.7) rectangle (1,2);
\fill[black,opacity=0.2] (-1,0.7) rectangle (1,2);
%\draw (-0.2,-0.3) circle(1.2);
\end{scope}

\begin{scope}
  \clip (-1,0) rectangle (1,2);
\fill[white] (0.3,-0.3) circle (1.9);
\draw[decoration={markings,
      mark=between positions 0 and 1 step 14pt with {\arrowreversed{to}}},
    postaction={decorate}] (0.3,-0.3) circle (1.9);
\end{scope}
%\draw[very thin] (-0.863,0)--(-0.863,2);%[pattern=north east lines
%\draw[very thin] (0.463,0)--(0.463,2);
 \draw[thin] (-1,0)--(-1,2);
  \draw[thin] (1,0)--(1,2);
  \draw[dotted] (1.05,0.7)--(-1.05,0.7);
  \node at (-1.15,0.7) {\scalebox{0.55}{$Y^{\sigma}$}};
 \draw[dotted] (1.05,1.7)--(-1.15,1.7);
  \node at (-1.2,1.7) {\scalebox{0.55}{$Y$}};
   \draw[dotted] (1.05,1.0856)--(-1.15,1.0856);
  \node at (-1.2,1.0856) {\scalebox{0.55}{$y_1$}};
%\begin{scope}
 % \clip (-0.5,0) circle(1);
  %\fill[black,opacity=0.2] (-0.273,0) rectangle (1,2);
  %\draw[thick] (-0.273,0)--(-0.273,2);
 % \draw[very thin] (0,0) circle (1);
%\end{scope}
  %\node at (0,0) [below] {{\tiny\color{gray} $ 0$}}; 
    \node at (-1,0) [below] {{\tiny\color{gray} $ x_1=0$}}; 
      \node at (1,0) [below] {{\tiny\color{gray} $ x_2=1$}};
      %\draw[very thin] (-1.1,0)--(1.1,0);
      %\draw[line width=0mm, gray] (0,0) -- (0,0.03);
    %\draw[line width=0mm, gray] (-1,0) -- (-1,0.03);
      %\draw[line width=0mm, gray] (1,0) -- (1,0.03);
      %\node at (0.3,1.2) {$\mathcal{P}_{j,1}$}; 
        %\node at (-0.93,1) {\scalebox{.5}{$\mathcal{P}_{j,1}$}};
        \node at (0,1.7) {$\mathcal{P}_{2}$};
        %\node at (0.75,1.2) {$\mathcal{P}_{j,3}$};
         \node at (0.5,1.45) {\tiny $\mathcal{S}$};
%\draw (\c,0) circle(\rr);
\end{tikzpicture}
\caption{Examples of the partition of $\BB\cap \HH(Y^{\sigma})$ into $\mathcal{P}_{1},\mathcal{P}_{2},\mathcal{P}_{3}$ in the case where $\BB$ contains the cusp at $\infty$.}\label{fig:escape}
\end{figure}
\begin{figure}
\begin{tikzpicture}[scale=2.5]
\clip (-1.3,-0.2) rectangle (1.2,2);
\fill[white] ((-1.3,-0.2) rectangle (1.2,2);
%\fill[white] (-1.1,-0.3) rectangle (1.1,2);
\begin{scope}
  \clip (-0.762,0) rectangle (1,2);
\fill[black,opacity=0.2] (1.2,-0.5) circle(2.3);
\end{scope}
\draw[very thin] (-0.762,0)--(-0.762,2);
\begin{scope}
 \clip (-0.762,0) rectangle (1,0.7);
\fill[white] (-0.762,0) rectangle (1,0.7);
\end{scope}
\begin{scope}
  \clip (-1,0) rectangle (1,2);
\draw[decoration={markings,
      mark=between positions 0 and 1 step 14pt with {\arrow{to}}},
    postaction={decorate}] (1.2,-0.5) circle(2.3);
\end{scope}
 \draw[thin] (-1,0)--(-1,2);
  \draw[thin] (1,0)--(1,2);
 \draw[dotted] (1.05,0.7)--(-1.05,0.7);
  \node at (-1.15,0.7) {\scalebox{0.55}{$Y^{\sigma}$}};
    \node at (1.15,0.7) {\scalebox{0.55}{$y_1$}};
  \draw[dotted] (1.05,1.7)--(-1.15,1.7);
  \node at (-1.2,1.7) {\scalebox{0.55}{$Y$}};
%\begin{scope}
 % \clip (-0.5,0) circle(1);
  %\fill[black,opacity=0.2] (-0.273,0) rectangle (1,2);
  %\draw[thick] (-0.273,0)--(-0.273,2);
 % \draw[very thin] (0,0) circle (1);
%\end{scope}
  %\node at (0,0) [below] {{\tiny\color{gray} $ 0$}};
  \node at (-0.762,0) [below] {{\tiny\color{gray} $ x_1$}}; 
    \node at (-1,0) [below] {{\tiny\color{gray} $ 0$}}; 
      \node at (1,0) [below] {{\tiny\color{gray} $ x_2=1$}};
      %\draw[very thin] (-1.1,0)--(1.1,0);
      %\draw[line width=0mm, gray] (0,0) -- (0,0.03);
    %\draw[line width=0mm, gray] (-1,0) -- (-1,0.03);
      %\draw[line width=0mm, gray] (1,0) -- (1,0.03);
      %\node at (0.3,1.2) {$\mathcal{P}_{j,1}$}; 
        \node at (0.3,0.9) {$\mathcal{P}_{2}$};
        \node at (-0.4,0.97) {\tiny $\mathcal{S}$};
%\draw (\c,0) circle(\rr);
\end{tikzpicture}\qquad
\begin{tikzpicture}[scale=2.5]
\clip (-1.3,-0.2) rectangle (1.2,2);
\fill[white] ((-1.3,-0.2) rectangle (1.2,2);
%\fill[white] (-1.1,-0.3) rectangle (1.1,2);
\begin{scope}
  \clip (-1,0.7) rectangle (1,2);
\fill[black,opacity=0.2] (0.1,-0.93) circle (2.9);
\end{scope}
\begin{scope}
  \clip (-1,0) rectangle (1,2);
\draw[decoration={markings,
      mark=between positions 0 and 1 step 14pt with {\arrow{to}}},
    postaction={decorate}] (0.1,-0.93) circle (2.9);
\end{scope}
%\draw[very thin] (-0.863,0)--(-0.863,2);%[pattern=north east lines
%\draw[very thin] (0.463,0)--(0.463,2);
 \draw[thin] (-1,0)--(-1,2);
  \draw[thin] (1,0)--(1,2);
  \draw[dotted] (1.05,0.7)--(-1.05,0.7);
  \node at (-1.15,0.7) {\scalebox{0.55}{$Y^{\sigma}$}};
 \draw[dotted] (1.05,1.7)--(-1.15,1.7);
  \node at (-1.2,1.7) {\scalebox{0.55}{$Y$}};
   \draw[dotted] (1.05,1.753)--(-1.05,1.753);
  \node at (1.1,1.753) {\scalebox{0.55}{$y_1$}};
%\begin{scope}
 % \clip (-0.5,0) circle(1);
  %\fill[black,opacity=0.2] (-0.273,0) rectangle (1,2);
  %\draw[thick] (-0.273,0)--(-0.273,2);
 % \draw[very thin] (0,0) circle (1);
%\end{scope}
  %\node at (0,0) [below] {{\tiny\color{gray} $ 0$}}; 
    \node at (-1,0) [below] {{\tiny\color{gray} $ x_1=0$}}; 
      \node at (1,0) [below] {{\tiny\color{gray} $ x_2=1$}};
      %\draw[very thin] (-1.1,0)--(1.1,0);
      %\draw[line width=0mm, gray] (0,0) -- (0,0.03);
    %\draw[line width=0mm, gray] (-1,0) -- (-1,0.03);
      %\draw[line width=0mm, gray] (1,0) -- (1,0.03);
      %\node at (0.3,1.2) {$\mathcal{P}_{j,1}$}; 
        %\node at (-0.93,1) {\scalebox{.5}{$\mathcal{P}_{j,1}$}};
        \node at (0,1.5) {$\mathcal{P}_{2}$};
        %\node at (0.75,1.2) {$\mathcal{P}_{j,3}$};
         \node at (0.6,1.85) {\tiny $\mathcal{S}$};
%\draw (\c,0) circle(\rr);
\end{tikzpicture}

\caption{Examples of $\PP_{2}=\BB\cap \HH(Y^{\sigma})$ in the case where $\BB$ does not contain the cusp at $\infty$ (so that $\PP_{1}=\PP_{3}=\emptyset$).}\label{fig:escape2}
\end{figure}
If $\mathcal{S}$ is oriented so that it connects $z_1$ with $z_2$ (so that $\BB$ contains the cusp at $\infty$) then by the volume formula for horostrips (\ref{eq:horostrip}) we conclude 
\begin{align*}
\vol(\PP_{1} \cap \HH(Y))=x_1 Y^{-1}=Y^{-(1-\sigma)}(x_1 Y^{-\sigma})&=Y^{-(1-\sigma)}\vol(\PP_{1})\\
&\leq Y^{-(1-\sigma)} \vol(\mathcal{B}\cap \mathcal{H}(Y^{\sigma})), 
\end{align*} 
and similarly for $\PP_{3} $ (see Figure \ref{fig:escape} where in the second example $\PP_{1}=\PP_{3}=\emptyset$). When bounding the volume of  $\PP_{2} $ we may reduce to the case where (\ref{eq:intersection}) has cardinality two and $Y^{\sigma}\leq y_1\leq y_2$. In this case we have
\begin{align}\label{eq:Pj1}
|\mathcal{S}\cap \HH(Y^{\sigma})|\geq \mathrm{dist}(z_1,x_2+i\R)=\log \left(\sqrt{\kappa^2+1}+\kappa\right)\geq \frac{1}{2} \kappa,\quad \kappa=\frac{x_2-x_1}{y_1}, 
\end{align} 
for $Y\geq 1$ (note that $0< x_2-x_1\leq 1$ and $y_1\geq Y^{\sigma}\geq 1$), using here a standard formula for the hyperbolic distance between a point and a line (see e.g. \cite[Sec. 7.20]{Beardon83}). On the other hand, we have trivially (as illustrated in Figure \ref{fig:escape}) that
\begin{equation}\label{eq:Pj2}\vol(\PP_{2}\cap \HH(Y))\leq\vol( \HH(y_1)\cap \{x_1\leq \Re z\leq x_2 \})= \frac{x_2-x_1}{y_1}, \end{equation}
which proves the bound (\ref{eq:claim}) in this case. If $\mathcal{S}$ is oriented so that it connects $z_2$ with $z_1$ (so that $\BB$ does not contain the cusp $\infty$), then we have $\PP_{1} =\PP_{3} =\emptyset$. We may reduce to the case where (\ref{eq:intersection}) has cardinality $2$ and $Y^{\sigma}\leq y_1\leq y_2$. If $y_1\leq Y$ (as in the first example in Figure \ref{fig:escape2}) then we have \begin{equation}\nonumber\vol(\PP_{2}\cap \HH(Y))\leq\vol( \HH(y_1)\cap \{x_1\leq \Re z\leq x_2 \})= \frac{x_2-x_1}{y_1}\leq 2 |\mathcal{S}\cap \HH(Y^{\sigma})|, \end{equation} 
for $Y\geq 1$, using the bound (\ref{eq:Pj1}) in the last inequality. If $y_1\geq Y$ then $x_0=0$ and $x_1=1$ (as in the second example in Figure \ref{fig:escape2}) and we have
\begin{equation}\nonumber
\vol(\BB\cap \HH(Y^\sigma)) \geq \vol(\HH(Y^{\sigma}))-\vol(\HH(Y))= Y^{-\sigma}-Y^{-1}\geq \frac{1}{2}Y^{-\sigma}, 
\end{equation}
whenever $Y\geq 2^{1/(1-\sigma)}$. This implies that
 \begin{equation}\nonumber
\vol(\PP_{2} \cap \HH(Y))\leq Y^{-1}\leq 2 Y^{-(1-\sigma)}\vol(\BB\cap \HH(Y^\sigma)), 
\end{equation}
which proves the required bound in this case as well.\end{proof}
With this geometric lemma at our disposal we are ready to prove non-escape of mass for equidistributing closed geodesics. Recall the notation (\ref{eq:volumeintersection}) and the definition of the cuspidal region from (\ref{eq:cuspidalzone}).
\begin{prop}\label{prop:nonescape}
Let $\Cc_1,\Cc_2,\ldots $ be a sequence of collections of oriented closed geodesics equidistributing in the unit tangent bundle $\Gamma\backslash \PSL_2(\R)$ and let $\mathfrak{a}\in \mathrm{cusp}(\FF)$ be a cusp of the fundamental polygon $\FF$. Then there exists constants $c>0$ and $Y_0>0$ such that for every $Y\geq Y_0$ there exists $n_0(Y)\geq 1$ such that the following holds: 
\begin{equation} 
\vol(\PP(\Cc_n,\FF)\cap \FF_\mathfrak{a}(Y))\leq c \, \vol(\PP(\Cc_n,\FF))Y^{-1/2}, \quad n\geq n_0(Y).
\end{equation}
Here all choices of constants are allowed to depend on the fundamental polygon $\FF$. 
\end{prop}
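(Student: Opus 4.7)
The strategy is to analyze cusp excursions of lifted geodesics and control them via equidistribution. By conjugating in $G=\PSL_2(\R)$ we may assume $\mathfrak{a}=\infty$ with stabilizer $\Gamma_\infty=\langle T\rangle$, $T:z\mapsto z+1$. Choose $Y_0$ large enough that for $Y\geq Y_0$ the cuspidal zone $\FF_\mathfrak{a}(Y)=\FF\cap\{\Im z\geq Y\}$ is disjoint from the cuspidal neighborhoods of all other cusps of $\FF$, and its preimage $\pi_\Gamma^{-1}(\FF_\mathfrak{a}(Y))$ intersected with the horizontal strip $\{\Im z\geq Y\}$ equals the disjoint union $\bigcup_{n\in\Z}T^n\FF_\mathfrak{a}(Y)$, tiling the strip up to measure zero.

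For each $\CC\in\Cc_n$ and each cusp excursion of a lift $\tilde\CC$ (a semicircular arc of some maximal height $r\geq Y$), a direct Fubini-type calculation using the tiling above identifies the contribution of this excursion to $\vol(\PP(\CC,\FF)\cap\FF_\mathfrak{a}(Y))$ with the hyperbolic area of the ``cap'' cut by the semicircle above $\Im z=Y$ on the left of $\tilde\CC$:
\begin{equation*}
\mu\bigl(\tilde{B}\cap\{\Im z\geq Y\}\bigr)=\tfrac{2\sqrt{r^2-Y^2}}{Y}-2\arccos(Y/r)\leq \tfrac{2r}{Y}.
\end{equation*}
To sum over all excursions arising from $\Cc_n$, I invoke equidistribution in the unit tangent bundle: by Lemma~\ref{lem:Zelditch}, it extends to smooth bounded functions on $\Gamma\backslash G$, and applied to bump functions supported in annular cuspidal regions $\{V\leq\Im z\leq 2V\}$ (lifted $K$-invariantly), it yields for every $V\geq Y_0$ and every $n\geq n_0(V)$ a total arclength of $\Cc_n$ at $\{\Im z\geq V\}$ of order $O(|\Cc_n|/V)$. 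Since each excursion of max height $\geq 2V$ spends arclength at least $2\log 2$ in the annulus $\{V\leq\Im z\leq 2V\}$, the number $N_V$ of excursions reaching $\{\Im z\geq V\}$ satisfies $N_V\ll|\Cc_n|/V$.

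We then split the sum of per-excursion bounds at a threshold $V^\ast\asymp Y^{3/2}$. For ``short'' excursions ($r\leq V^\ast$), the contribution is at most $(V^\ast/Y)\,N_Y\ll|\Cc_n|\,Y^{-1/2}$. For ``long'' excursions ($r>V^\ast$), integration by parts in the max-height distribution, combined with $N_V\ll|\Cc_n|/V$ at the higher scales $V\geq V^\ast$, yields a contribution of the same order, provided $n_0(Y)$ is taken large enough that the effective maximal excursion height is controlled by the equidistribution estimates at successively higher annuli. Combined with the lower bound $\vol(\PP(\Cc_n,\FF))\gg|\Cc_n|$ from Proposition~\ref{prop:vol}, we obtain $\vol(\PP(\Cc_n,\FF)\cap\FF_\mathfrak{a}(Y))\ll\vol(\PP(\Cc_n,\FF))\,Y^{-1/2}$.

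The main obstacle lies in handling the ``long'' excursions: a single high excursion of max $r\gg Y$ contributes volume as large as $2r/Y$, and a naive summation yields an unwanted $\log$ factor that must be controlled. This is precisely where ``comparing hyperbolic lengths and volumes'' enters: the hyperbolic arclength of an excursion above height $Y$ is $2\cosh^{-1}(r/Y)$, growing only logarithmically in $r$, so that equidistribution at successively higher annuli forces the density of high excursions to be sparse enough to compensate for their individually large volume contributions, yielding the bound $Y^{-1/2}$ uniformly in $\FF$.
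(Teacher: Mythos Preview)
Your excursion-by-excursion approach has a genuine gap in the treatment of long excursions. With the per-excursion bound $2r/Y$ and the equidistribution input $N_V\ll |\Cc_n|/V$, summation by parts gives
\[
\sum_{r>V^\ast}\frac{r}{Y}\;\ll\;\frac{|\Cc_n|}{Y}\Bigl(1+\log\tfrac{R_n}{V^\ast}\Bigr),
\]
where $R_n$ is the maximal excursion height of $\Cc_n$. The logarithm is not removed by the density bound $N_V\ll |\Cc_n|/V$: that decay is precisely borderline for divergence of $\int N_V\,dV$, so your last paragraph (``sparse enough to compensate'') is not a valid argument. There is also a uniformity problem you do not address: the estimate $N_V\ll |\Cc_n|/V$ comes from equidistribution applied at scale $V$ and is only available for $n\geq n_0(V)$. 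Since $R_n$ is not bounded in terms of $Y$, you cannot invoke these bounds at all dyadic scales $V\in[V^\ast,R_n]$ with a single $n_0(Y)$.

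The paper avoids both issues by a different decomposition. Instead of grouping by cusp excursions, it works polygon-by-polygon: each $\PP_j$ (a piece of the partial covering) lies in a single translate of $\FF$, hence has horizontal extent at most $1$ in the normalized cusp. For such a piece one proves the pointwise inequality
\[
\vol(\PP_j\cap\FF_\mathfrak{a}(Y))\;\ll_\FF\;|\LL_j\cap\FF_\mathfrak{a}(Y^{1/2})|\;+\;Y^{-1/2}\vol(\PP_j),
\]
where $\LL_j$ is the geodesic side of $\partial\PP_j$. Summing over $j$ and applying equidistribution only once, at the single fixed scale $Y^{1/2}$, gives $|\Cc_n\cap\FF_\mathfrak{a}(Y^{1/2})|\ll |\Cc_n|Y^{-1/2}$ for $n\geq n_0(Y)$, and Proposition~\ref{prop:vol} finishes. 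The two-scale trick $Y\mapsto Y^{1/2}$ is exactly what produces the exponent $-1/2$ without any dyadic summation, and the bounded-width property of each $\PP_j$ is what makes the pointwise claim hold; neither ingredient appears in your argument.
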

\begin{proof}
By conjugation we may reduce to the case 
$\mathfrak{a}=\infty$ with width one and by letting $Y$ be sufficiently large (depending on $\FF$) we may assume that 
\begin{equation}\label{eq:FFHH}\FF_\infty(Y)=\HH(Y)=\{z\in \Hb: 0< \Re z< 1, \Im z\geq  Y \},\end{equation}
with the cuspidal region defined in eq.\ (\ref{eq:cuspidalzone}), using here that all the choices are allowed to depend on $\FF$. Let $Y_0\geq 4$ be such that $\FF_\infty(Y^{1/2})=\HH(Y^{1/2})$ holds for $Y\geq Y_0$. For each $n\geq 1$ let 
$$\mathcal{S}_{1,n},\ldots, \mathcal{S}_{m_n,n},\quad  \text{and}\quad \PP_{1,n}=\BB_{1,n}\cap \gamma_{1,n}\FF,\ldots, \PP_{m_n,n}=\BB_{m_n,n}\cap \gamma_{m_n,n}\FF,$$ 
denote the infinite geodesics and polygons as in (\ref{eq:PPi}) used to define the partial covering $\PP(\Cc_n,\FF)$ (note that the $\mathcal{S}_{j,n}$'s and  $\BB_{j,n}$'s are not necessarily  all distinct). For $Y\geq Y_0$ we have 
\begin{align}\label{eq:Pij}(\gamma_{j,n}^{-1}\BB_{j,n})\cap \HH(Y^{1/2})= (\gamma_{j,n}^{-1}\PP_{j,n})\cap \FF_\infty(Y^{1/2}),\quad j=1,\ldots, m_n.\end{align} 
Now we partition $\PP(\Cc_n,\FF)\cap \FF_\infty(Y)$ in terms of the $\PP_{j,n}$'s and apply Lemma \ref{lem:geometric} with $\sigma=1/2$ to the infinite oriented  geodesic $\gamma_{j,n}^{-1}\mathcal{S}_{j,n}$ for each $j=1,\ldots, m_n$ which in view of (\ref{eq:Pij}) gives
\begin{align}\nonumber
\vol(\PP(\Cc_n,\FF)\cap \FF_\infty(Y))&=\sum_{j=1}^{m_n} \vol\left((\gamma_{j,n}^{-1}\PP_{j,n})\cap \FF_\infty(Y)\right)\\
\nonumber &\leq 2\sum_{j=1}^{m_n} \biggr(\left|(\gamma_{j,n}^{-1}\mathcal{S}_{j,n})\cap\HH(Y^{1/2})\right|+Y^{-1/2} \vol\left((\gamma_{j,n}^{-1}\BB_{j,n})\cap\HH(Y^{1/2})\right)\biggr) \\
\nonumber &\leq 2\sum_{j=1}^{m_n} \biggr(|(\gamma_{j,n}^{-1}\mathcal{S}_{j,n})\cap\HH(Y^{1/2})|+Y^{-1/2} \vol(\PP_{j,n})\biggr) \\
\label{eq:boundnoescape}&= 2|\Cc_n\cap\FF_\infty(Y^{1/2})|+2Y^{-1/2} \vol(\PP(\Cc_n,\FF)),\quad Y\geq Y_0,
\end{align}
using also the trivial inequality $\vol((\gamma_{j,n}^{-1}\PP_{j,n})\cap\FF_\infty(Y^{1/2}))\leq \vol(\PP_{j,n})$. By equidistribution of $(\Cc_n)_{n\geq 1}$ we have for any $Y>0$ that 
$$\frac{|\Cc_n\cap\FF_\infty(Y^{1/2})|}{|\Cc_n|}=\frac{\vol(\FF_\infty(Y^{1/2}))}{\vol(\Gamma)}+o_{\FF, Y}(1),\quad \text{as }n\rightarrow \infty.$$
Combined with Proposition \ref{prop:vol} and the volume formula (\ref{eq:horostrip}) we conclude that for $n$ sufficiently large (depending on $Y$ and $\FF$) it holds that  
$$|\Cc_n\cap\FF_\infty(Y^{1/2})|\leq 2 \frac{\vol(\FF_\infty(Y^{1/2}))}{\vol (\Gamma)}|\Cc_n|\ll_\FF Y^{-1/2}\vol(\PP(\Cc_n,\FF)) .$$
This finishes the proof in view of (\ref{eq:boundnoescape}).
 \end{proof}

\section{Equidistribution}%%%%%%%%%%%%%%%%%%%%%%%%%%%%%%%%%%%%%%%%%%%%%%%%%%%%%%%%%%%%%
Our main theorem is that equidistribution of partial coverings associated to oriented closed geodesics is a consequence of the equidistribution of the same geodesics in the unit tangent bundle.
\begin{thm}\label{thm:main}
Let $\FF$ be a fundamental polygon for a discrete and cofinite subgroup $\Gamma\leq \PSL_2(\R)$. Assume that we are given a sequence of collections of oriented closed geodesics $\Cc_1,\Cc_2,\ldots$ which equidistribute with respect to Haar measure in the unit tangent bundle $\Gamma\backslash G$, i.e.\ for continuous and bounded function $\Psi:\Gamma\backslash G\rightarrow \C$ it holds that  
\begin{equation}\label{eq:equidigeodesics}\frac{\sum_{\CC\in \Cc_n}\mu_\CC(\Psi)}{|\Cc_n|}\rightarrow  \frac{\int_{\Gamma\backslash G} \Psi(g)d\mu_G(g)}{\vol(\Gamma)},\quad \text{as $n\rightarrow \infty$.} \end{equation}
% as $n\rightarrow \infty$. 
Then the partial coverings $\PP(\Cc_n,\FF)$ cover $Y_\Gamma$ evenly
%we have the weak convergence
%$$\frac{\PP(\Cc_n,\FF)}{\vol(\PP(\Cc_n,\FF))}\xrightarrow{w^\ast} \frac{1}{\vol(Y_\Gamma)}, \quad n\rightarrow \infty,$$
i.e.\ for any continuous and bounded function $\phi:Y_\Gamma\rightarrow \C$ it holds that 
\begin{equation}\label{eq:equidisurface}\frac{\int_{\PP(\Cc_n,\FF)}\phi}{\vol(\PP(\Cc_n,\FF))}\rightarrow   \frac{\int_{Y_\Gamma} \phi}{\vol(\Gamma)},\quad \text{as $n\rightarrow \infty$.} \end{equation}
Moreover, if $\Gamma$ is co-compact and the convergence (\ref{eq:equidigeodesics}) holds with an effective error-term $O_\Psi(E(n))$ with $E(n)\rightarrow 0,n\rightarrow \infty$ then (\ref{eq:equidisurface}) holds with the same effective error-term (possibly with a different implied constant allowed to depend on $\FF$).   
%as $n\rightarrow \infty$.
\end{thm}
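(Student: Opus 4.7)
The plan is to execute the strategy sketched in Section 1.2 using the tools assembled in Sections 2--4. Fix a test function $\Psi\in C_c^\infty(Y_\Gamma)$. Proposition \ref{prop:vol} already gives $\vol(\PP(\Cc_n,\FF))\geq (c(\Gamma,\FF)+o(1))|\Cc_n|$, so (\ref{eq:equidisurface}) is equivalent to
$$\int_{\PP(\Cc_n,\FF)}\Psi \;=\; \tfrac{\langle\Psi,1\rangle}{\vol(\Gamma)}\,\vol(\PP(\Cc_n,\FF))\;+\;o(|\Cc_n|).$$
Decomposing $\Psi=\Psi_0+c$ with $c=\langle\Psi,1\rangle/\vol(\Gamma)$, the constant summand yields precisely the main term. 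It therefore suffices to prove $\int_{\PP(\Cc_n,\FF)}\Psi_0=o(|\Cc_n|)$ whenever $\langle\Psi_0,1\rangle=0$; in the non-compact case $\Psi_0$ loses compact support, but Proposition \ref{prop:nonescape} (applied with a height growing slowly with $n$) absorbs the contribution in the cusps.

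The heart of the argument is to produce a weight-$2$ automorphic form $\Phi\in\AA(\Gamma,2)$, rapidly decaying at every cusp, with $\Psi_0=L_2\Phi$. In the co-compact case I would invert $\Delta=L_2 R_0$ spectrally via (\ref{eq:spectralcompact}): for each non-constant $f\in B(\Gamma,0)$ one has $L_2 R_0 f=\lambda_f f$, so one sets $\Phi=\sum_{f\ne 1}\lambda_f^{-1}\langle\Psi_0,f\rangle\, R_0 f$. Uniform convergence and an effective error term follow from the decay (\ref{eq:boundIP}) paired with the bound (\ref{eq:spectralbound}). In the non-compact case the continuous spectrum obstructs this direct inversion; instead one applies Lemma \ref{lem:Poincare} to approximate $\Psi_0$ uniformly by a sum of incomplete Poincaré series $P_m(\cdot|\psi_m)$, each still of zero mean, and constructs an antiderivative on each by solving a one-dimensional ODE in the $y$-variable on the generator $\psi_m(y)e(mx)$. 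The mean-zero condition on $\psi_0$ guaranteed by Lemma \ref{lem:Poincare} is exactly the solvability condition at the zero Fourier mode, and the compact support of $\psi_m$ ensures rapid decay of the resulting Poincaré antiderivative at every cusp.

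With $\Phi$ in hand, Stokes' theorem on the (oriented) partial covering converts the bulk integral into
$$\int_{\PP(\Cc_n,\FF)}\Psi_0 \;=\; \int_{\partial \PP(\Cc_n,\FF)}\Phi(z)\,\tfrac{dz}{\Im z},$$
and $\partial \PP(\Cc_n,\FF)$ splits into the geodesics $\Cc_n$ and a \emph{topological piece} $\LL_n$ lying in $\Gamma$-translates of $\partial\FF$. For the geodesic part, Lemma \ref{lem:weight2} identifies the integral with $i\,\mu_{\Cc_n}(\tilde\Phi)$, and because $\Phi$ has weight $2\ne 0$ its lift satisfies (\ref{eq:kneq0}); equidistribution of $\Cc_n$ (Lemma \ref{lem:Zelditch} extending the test class to bounded functions) then gives $\mu_{\Cc_n}(\tilde\Phi)=o(|\Cc_n|)$. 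For the topological piece I would use Lemma \ref{lem:structure}: since $\LL_n$ together with $\Cc_n$ bounds $\PP(\Cc_n,\FF)$ in $X_\Gamma$, we have $[\LL_n]=-[\Cc_n]$ in $H_1(X_\Gamma,\Z)$, so expanding both in the integral basis (\ref{eq:bnd}) and pairing via de Rham with the dual harmonic representatives (\ref{eq:DF}), the difference $\int_{\LL_n}\Phi\tfrac{dz}{\Im z}+\int_{\Cc_n}\Phi\tfrac{dz}{\Im z}$ reduces to a finite $\Z$-linear combination of periods of weight-$2$ holomorphic cusp forms along $\Cc_n$. A second application of Lemma \ref{lem:weight2} rewrites each such period as $\mu_{\Cc_n}$ applied to the weight-$2$ lift of $y f_i(z)$ (or its conjugate), hence again $o(|\Cc_n|)$ by equidistribution.

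The main obstacle I anticipate is the construction of the weight-$2$ antiderivative in the non-compact case with sufficient control to justify Stokes and the absolute convergence of the boundary integrals: one must verify uniform convergence of the Poincaré-series decomposition, \emph{and} rapid decay of the resulting $\Phi$ at every cusp of $\Gamma$ (not just $\infty$), and handle the mean-zero constraint cleanly enough to preserve the effective dependence. A secondary technical point is the interaction between non-escape of mass and the reduction to mean-zero $\Psi_0$: the cutoff parameter in Proposition \ref{prop:nonescape} must be chosen compatibly with the decay rate of $\Phi$ so that the cuspidal remainder is genuinely $o(|\Cc_n|)$; this is the reason the effective statement is claimed only in the co-compact case.
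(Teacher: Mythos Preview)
Your proposal is correct and follows essentially the same route as the paper: volume lower bound via Proposition~\ref{prop:vol}, antiderivative $\Phi$ with $L_2\Phi=\Psi_0$ (spectral inversion in the compact case, Poincar\'e series plus ODE in the non-compact case), Stokes to the boundary, Lemma~\ref{lem:weight2} for the geodesic piece, and the simplicial identity from Section~\ref{sec:topology} for the topological piece. One point where the paper proceeds slightly differently: rather than subtracting the global constant $c=\langle\Psi,1\rangle/\vol(\Gamma)$ (which, as you note, destroys compact support and then forces you to juggle a truncation that is no longer mean-zero), the paper subtracts $\mathbf{1}_{\FF(Y)}\cdot\langle\Psi,1\rangle/\vol(\FF(Y))$, which is simultaneously compactly supported and exactly orthogonal to $1$, so Lemma~\ref{lem:Poincare} and Lemma~\ref{lem:ODE} apply directly; non-escape of mass then only has to control $\mu_n(\mathbf{1}_{\FF(Y)})$, which is immediate.
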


The proof will occupy the rest of this section. %We will reduce the equidistribution of $\PP(\Cc_n, \FF)$ to that of the oriented closed geodesics $\Cc_n$ in the unit tangent bundle for test functions of weiht $2$ using the lower bound on the volume of the partial covering from the previous section.   

\subsection{An application of Stokes' theorem}
The first key step, going back to Duke--Imamo\={g}lu--T\'{o}th \cite[Lemma 2]{DuImTo}, is an application of Stokes' theorem which allows one to reduce the integral over the partial coverings to an integral over the boundary of a weight $2$ automorphic form. We present here a slightly more automorphic perspective in terms of raising and lowering operators (\ref{eq:levelraising}). We begin with an elementary lemma.\begin{lemma}\label{lem:deriv}
Let $\varphi: \C\rightarrow \C$ be a $1$-differentiable function. Then it holds that
\begin{equation}
L_2 y \varphi(z)=2i y^2 \partial_{\overline{z}} \varphi(z), 
\end{equation}
 where $L_2=1+2iy\partial_{\overline{z}}=1+iy\frac{\partial }{\partial x}-y\frac{\partial }{\partial y}$ denotes the lowering operator of weight $2$.
\end{lemma}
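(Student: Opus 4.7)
The plan is a direct computation using the product rule for the Wirtinger derivative $\partial_{\bar z}$, together with the elementary identity
\begin{equation*}
\partial_{\bar z} y = \tfrac{1}{2}(\partial_x + i\partial_y) y = \tfrac{i}{2},
\end{equation*}
or equivalently, writing $y = (z-\bar z)/(2i)$, the identity $(z-\bar z)\partial_{\bar z} y = -y$. Starting from the definition $L_2 = 1 + (z-\bar z)\partial_{\bar z} = 1 + 2iy\partial_{\bar z}$ from equation (\ref{eq:levelraising}), I would expand
\begin{equation*}
L_2(y\varphi) = y\varphi + 2iy\bigl[(\partial_{\bar z} y)\varphi + y\partial_{\bar z}\varphi\bigr] = y\varphi + 2iy\cdot\tfrac{i}{2}\,\varphi + 2iy^2\,\partial_{\bar z}\varphi,
\end{equation*}
and then observe that the middle term simplifies to $-y\varphi$, which exactly cancels the leading $y\varphi$. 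What survives is precisely the claimed expression $2iy^2\partial_{\bar z}\varphi$.

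There is no real obstacle here: the content of the lemma is simply the observation that the zeroth-order term of $L_2$ is tuned to cancel the contribution of $\partial_{\bar z} y$ produced by the Leibniz rule, which is exactly the reason that $L_2$ annihilates the weight-$2$ lift $y\cdot f(z)$ of a holomorphic function $f$ (since then $\partial_{\bar z}\varphi = \partial_{\bar z} f = 0$). This identity is the algebraic mechanism making the Stokes-type rewriting $\int_{\PP}\Psi = \int_{\partial\PP}\Phi$ (with $\Psi = L_2\Phi$) effective, as the $2iy^2\,\partial_{\bar z}$-structure on the right is what matches the hyperbolic area form $dx\,dy/y^2$ against an exact differential.
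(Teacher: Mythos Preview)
Your proof is correct and is precisely the direct computation the paper has in mind; the paper's own proof consists only of the sentence ``This follows by direct computation.''
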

\begin{proof}
This follows by direct computation.
\end{proof}
Recall next the definition (\ref{eq:bnd}) and (\ref{eq:coho}) of the bases $\CC_{1}(\FF),\ldots, \CC_{2g}(\FF)$ and $\omega_{1}(\FF),\ldots, \omega_{2g}(\FF)$ for homology and cohomology.
\begin{lemma}[Cf. Lemma 2 of \cite{DuImTo}]\label{lem:Stokes}Let $\Phi\in \mathcal{A}(\Gamma,2)$ be an automorphic form for $\Gamma$ of weight $2$. If $\Gamma$ is not co-compact assume further that $\Phi$ and $L_2 \Phi$ are both rapidly decaying at all cusps of $\Gamma$. Then it holds that
\begin{equation} \int_{\PP(\CC,\FF)} L_2 \Phi=  \int_\CC \Phi(z) \tfrac{dz}{\Im z} - \sum_{i=1}^{2g} \left(\int_{\CC_{i}(\FF)} \Phi(z) \tfrac{dz}{\Im z}\right) \, \int_{\CC} \omega_{i}(\FF).  \end{equation}
\end{lemma}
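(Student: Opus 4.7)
My plan is to apply Stokes' theorem to each of the hyperbolic polygons $\PP_i=\tilde{B}\cap \gamma_i\FF$ making up $\PP(\CC,\FF)$, and then to evaluate the resulting boundary integral over $\Gamma\backslash\partial\FF$ using the homological input from Section~\ref{sec:topology}.

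First I would verify the pointwise identity of $2$-forms on $\Hb$,
$$d\!\left(\Phi(z)\,\tfrac{dz}{\Im z}\right)=L_2\Phi(z)\,d\mu(z),$$
by a direct calculation using $L_2=1+iy\partial_x-y\partial_y$ and $dz\wedge d\bar z=-2i\,dx\wedge dy$. Weight-$2$ automorphy of $\Phi$ ensures that $\eta:=\Phi(z)\,dz/\Im z$ is $\Gamma$-invariant and descends to a smooth (non-closed) $1$-form on $Y_\Gamma$. Applying Stokes on each $\PP_i$, the boundary splits as the geodesic arc $\tilde{\CC}\cap\overline{\gamma_i\FF}$ (oriented along $\tilde{\CC}$, since $\PP_i$ lies to its left) together with an arc $\BB_i\subset \partial(\gamma_i\FF)\cap\tilde{B}$. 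Summing over $i$ and pushing down to $Y_\Gamma$, this gives
$$\int_{\PP(\CC,\FF)}L_2\Phi=\int_\CC\Phi\,\tfrac{dz}{\Im z}+\int_\LL\Phi\,\tfrac{dz}{\Im z},$$
where $\LL:=\sum_i \pi_\Gamma(\BB_i)$ is a singular $1$-chain supported on the boundary graph $\Gamma\backslash\partial\FF$. In the non-compact case one first carries out this calculation after truncating each $\PP_i$ outside of $\Gamma$-translates of the cuspidal zones $\FF_\mathfrak{a}(Y)$, and then sends $Y\to\infty$, discarding the horocyclic contributions via the rapid decay hypotheses on $\Phi$ and $L_2\Phi$.

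Next I would observe that the entry and exit points of $\tilde{\CC}$ across consecutive $\gamma_i\FF$ cancel in pairs, so $\LL$ is a closed singular $1$-chain. Since $\PP(\CC,\FF)$ is a $2$-chain on $X_\Gamma$ with boundary $\CC+\LL$, we get $[\CC]+[\LL]=0$ in $H_1(X_\Gamma,\Z)$. Using the duality between the bases (\ref{eq:bnd}) and (\ref{eq:coho}) one expands $[\CC]=\sum_{i=1}^{2g}a_i\,[\CC_i(\FF)]$ with $a_i=\int_\CC\omega_i(\FF)$, and by the isomorphism of Lemma~\ref{lem:structure} the identity $[\LL]=-\sum_i a_i\,[\CC_i(\FF)]$ holds already in $H_1(\Gamma\backslash\partial\FF,\Z)$. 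Thus there is a singular $2$-chain $D$ on $\Gamma\backslash\partial\FF$ with $\partial D=\LL+\sum_i a_i\,\CC_i(\FF)$, and Stokes applied to this identity yields
$$\int_\LL\eta+\sum_{i=1}^{2g}a_i\int_{\CC_i(\FF)}\eta=\int_{\partial D}\eta=\int_D d\eta=\int_D L_2\Phi\,d\mu=0.$$
The crucial observation making the final equality hold is that every singular $2$-simplex of $D$ factors through the $1$-dimensional complex $\Gamma\backslash\partial\FF$, so the pullback of the $2$-form $d\mu$ vanishes identically on $\Delta^2$. Substituting back into the Stokes formula above gives the claimed identity.

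The main obstacle I anticipate is making the non-compact truncation rigorous: one must verify that the horocyclic boundary contributions at height~$Y$ vanish uniformly over the possibly many polygons $\PP_i$ extending into the cusps. Rapid decay of $\Phi$ controls the integrals of $\eta$ along the new horocyclic boundaries, while rapid decay of $L_2\Phi$ together with the estimate $\vol(\FF_\mathfrak{a}(Y))=O_\FF(Y^{-1})$ from (\ref{eq:bulkvol}) absorbs the missing piece of $\int L_2\Phi\,d\mu$ in each cuspidal zone. Away from this subtlety the argument is a finite Stokes calculation combined with the homological accounting on the boundary graph.
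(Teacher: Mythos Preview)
Your proof is correct and follows the same architecture as the paper's: rewrite $L_2\Phi\,d\mu$ as the exterior derivative of $\eta=\Phi(z)\,dz/\Im z$, apply Stokes to each $\PP_i$, split the resulting boundary into $\CC$ plus a closed chain $\LL$ supported on the boundary graph, and use $[\LL]=-[\CC]$ together with the isomorphism of Lemma~\ref{lem:structure}. The only noticeable difference is in how you justify $\int_\LL\eta=-\sum_i a_i\int_{\CC_i(\FF)}\eta$. The paper exploits that $\Gamma\backslash\partial\FF$ is a $1$-dimensional $\Delta$-complex, so that $H_1^\Delta(\Gamma\backslash\partial\FF)=\Delta_1^{\mathrm{cl}}(\Gamma\backslash\partial\FF)$ (there is no $\partial_2$); hence the homology relation is literally an identity of formal sums of edges, eq.~(\ref{eq:LL=-sum}), and one integrates $\eta$ against it directly. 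You instead produce a singular $2$-chain $D$ on the graph with $\partial D=\LL+\sum_i a_i\,\CC_i(\FF)$ and argue that the pullback of the $2$-form $d\eta$ along any simplex of $D$ vanishes because its image is $1$-dimensional. This is an equivalent, correct justification---just slightly less direct than using the equality of chains.
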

\begin{proof}
By Lemma \ref{lem:deriv} we see that $L_2 \Phi=2iy^2 \partial_{\overline{z}} \varphi$ where $\Phi(z)=(\Im z) \varphi(z)$. Recall also that $dz\wedge\overline{dz}=-2i\, dxdy$. In the co-compact case an application of Stokes' theorem yields directly: 
%By truncating at height $Y>0$ at each cusp we see that by Stokes' theorem {\color{red} Can't we just remove the truncation since the functions decay rapidly? Otherwise need to define $\PP(\CC,\FF)(Y)$}
\begin{align}%\PP(\CC,\FF)\setminus (\cup_j\FF_{\mathfrak{b}_{i,j})}(Y))
\int_{\PP(\CC,\FF)} L_2 \Phi(z) \frac{dxdy}{y^2}  = -\int_{\PP(\CC,\FF)}  \partial_{\overline{z}} \varphi(z) dz\wedge d\overline{z}
&= \int_{\partial\PP(\CC,\FF)}  \varphi(z) dz\\
&= \int_{\partial\PP(\CC,\FF)} \Phi(z) \tfrac{dz}{\Im z}, \end{align}
%\begin{align}%\PP(\CC,\FF)\setminus (\cup_j\FF_{\mathfrak{b}_{i,j})}(Y))
%\lim_{Y\rightarrow \infty}\int_{\PP(\CC,\FF)(Y)} L_2 f(z) \frac{dxdy}{y^2}  &= -\lim_{Y\rightarrow \infty}\int_{\PP(\CC,\FF)(Y)}  \partial_{\overline{z}} F(z) dz\wedge d\overline{z}\\
%&= \lim_{Y\rightarrow \infty}\int_{\partial(\PP(\CC,\FF)(Y))}  F(z) dz\\
%&= \int_{\partial\PP(\CC,\FF)} f(z) \tfrac{dz}{\Im z}, \end{align}
%since $f$ is rapidly decaying at all cusps, meaning that the integral is absolutely convergent. 
The same remains true in the non-co-compact case by a truncation argument as in proof of \cite[Lemma 2]{DuImTo}: by the rapid decay assumption on $\varphi$ we can consider $\varphi(z)dz$ as a bounded $1$-form on $\PP(\CC,\FF)$  whereas the length of a closed horocycle in $Y_\Gamma$ tends toward $0$ as it approaches a cusp. %({\color{red} Or make truncation argument. I kinda like this one as it is more conceptual})

By construction we can partition the boundary $\partial\PP(\CC,\FF)$ in terms of the boundary of the polygons $\PP_1,\ldots, \PP_m$ given by the recipe (\ref{eq:PPi}) so that: 
$$\partial\PP(\CC,\FF)= \cup_{i=1}^m \partial \PP_i = \tilde{\CC}\cup \tilde{\LL}\subset \Hb, $$
where $\tilde{\LL}\subset \Gamma(\partial \FF)$ is the part contained in the $\Gamma$-translates of the boundary of $\FF$. Since the projection $\pi_\Gamma(\tilde{\CC})=\CC\subset Y_\Gamma$, with $\pi_\Gamma:\Hb\rightarrow Y_\Gamma$ as in (\ref{eq:piG}), is closed it follows that the projection 
$$\LL:=\pi_\Gamma(\tilde{\LL})\subset \Gamma\backslash \partial \FF, $$
is the image of a continuous closed curve and thus defines an element of $\Delta^\mathrm{cl}_1(\Gamma\backslash \partial \FF)$ (which we will denote by the same symbol). 
Since $\CC\cup \LL\subset X_\Gamma$ is the boundary of the $2$-chain $\pi_\Gamma(\cup_{i=1}^m \PP_i)$ we conclude that $\CC$ and $\LL$ are anti-homologous:
$$  [\LL]=-[\CC]\in H_1(X_\Gamma,\Z).$$ 
Recall that $\CC_1(\FF),\ldots, \CC_{2g}(\FF)\subset \Gamma\backslash \partial\FF$ is an integral basis for the $\Delta$-simplicial homology group $H_1^\Delta(\Gamma\backslash \partial\FF)$ of the boundary graph as defined in eq. (\ref{eq:bnd}). By definition (\ref{eq:coho}) of the dual basis $\omega_1(\FF),\ldots, \omega_{2g}(\FF)$ we conclude that 
\begin{equation}\label{eq:LL=-CC}[\LL]=-[\CC]=-\sum_{i=1}^{2g}[\CC_{i}(\FF)]\langle \CC, \omega_i(\FF)\rangle_P\in H_1(X_\Gamma,\Z).\end{equation}
By the isomorphism (\ref{eq:isohom}), this implies the following equality of closed $1$-simplices (considered as subsets of $Y_\Gamma$):
\begin{equation}\label{eq:LL=-sum}\LL= -\sum_{i=1}^{2g}\CC_{i}(\FF)\langle \CC, \omega_i(\FF)\rangle_P\in \Delta^\mathrm{cl}_1(\Gamma\backslash \partial \FF).\end{equation}
By eq. (\ref{eq:LL=-sum}) we arrive at 
\begin{align}\nonumber
\int_{\partial\PP(\CC,\FF)} \Phi(z) \tfrac{dz}{\Im z}&=\int_{\CC} \Phi(z) \tfrac{dz}{\Im z}+\int_{\LL} \Phi(z) \tfrac{dz}{\Im z}\\
&=\int_{\CC} \Phi(z) \tfrac{dz}{\Im z}- \sum_{i=1}^{2g} \left(\int_{\CC_{i}(\FF)} \Phi(z) \tfrac{dz}{\Im z}\right) \langle \CC, \omega_i(\FF)\rangle_P.
\end{align}
Now the conclusion follows since $\langle \CC, \omega_i(\FF)\rangle_P=\int_\CC \omega_i(\FF)$ by standard properties of the Poincar\'{e} pairing.
\end{proof}
\begin{remark}The sums over $i=1,\ldots, 2g$ are what we above referred to as the \emph{topological terms}.  Note that all the dependence on the geodesic $\CC$ in the topological terms is in the Poincar\'{e} pairings $\int_\CC \omega_i(\FF)$. For us both $\Gamma$ and $\FF$ are fixed and so it will suffice that the terms $\int_{\CC_i(\FF)} \Phi$ are finite numbers. In \cite{HumphriesNordentoft22} the first named author and Peter Humphries studied sparse equidistribution for Hecke congruence groups $\Gamma_0(q)$ with $q$ varying which leads to the problem of bounding $\int_{\CC_i(\FF_q)} \Phi$ in terms of $q$. This requires resolving a homological version of the sup norm problem which was handled in \cite{NordentoftConcentration} by the first named author. It would be interesting to see what can be said in the general setting considered in the present paper.\end{remark}

\subsection{Finding the antiderivatives}
The preceding section leads to the problem of solving for each test function $\Psi\in \mathcal{A}(\Gamma, 0)$ the equation $L_2 \Phi=\Psi$ for $\Phi\in \mathcal{A}(\Gamma, 2)$ with good control on the regularity. Note that this is only possible when $\langle \Psi,1\rangle=0$. In this case we achieve the wanted solution with two different arguments depending on whether $\Gamma$ is co-compact or not. 
\begin{lemma}[Compact case]\label{lem:antiderivcpt}
Let $\Gamma\leq \PSL_2(\R)$ be a discrete and co-compact subgroup. Let $\Psi\in \mathcal{A}(\Gamma,0)$ be an automorphic form of weight $0$ satisfying $\langle \Psi,1\rangle=0$. Then there exists an automorphic form $\Phi\in \mathcal{A}(\Gamma, 2)$ of weight $2$ such that
 \begin{equation} L_2 \Phi= \Psi,\end{equation} 
 where $L_2=1+2iy\partial_{\overline{z}}=1+iy\frac{\partial }{\partial x}-y\frac{\partial }{\partial y}$ denotes the lowering operator of weight $2$.
\end{lemma}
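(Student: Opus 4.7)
My plan is to use the spectral decomposition of the Laplacian $\Delta = L_2 R_0$ on $L^2(\Gamma, 0)$, writing $\Phi$ as an explicit series and then checking enough regularity to apply $L_2$ termwise.

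Concretely, let $B(\Gamma, 0)$ be the orthonormal basis of Maa{\ss} forms for $L^2(\Gamma, 0)$, as in (\ref{eq:spectralcompact}). For each non-constant $f \in B(\Gamma, 0)$ with eigenvalue $\lambda_f > 0$ we have $L_2 R_0 f = \Delta f = \lambda_f f$, so $L_2\bigl(\lambda_f^{-1} R_0 f\bigr) = f$. Since $\langle \Psi, 1 \rangle = 0$, the spectral expansion (\ref{eq:spectralcompact}) involves only the non-constant Maa{\ss} forms, and I would \emph{define}
\begin{equation*}
\Phi(z) := \sum_{\substack{f \in B(\Gamma, 0) \\ \lambda_f > 0}} \frac{\langle \Psi, f\rangle}{\lambda_f}\, R_0 f(z) \; \in \; \mathcal{A}(\Gamma, 2),
\end{equation*}
provided the series converges suitably. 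If termwise application of $L_2$ is justified, then
\begin{equation*}
L_2 \Phi = \sum_{\lambda_f > 0} \langle \Psi, f \rangle f = \Psi,
\end{equation*}
which is exactly what we want.

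The main step is therefore to verify convergence of $\Phi$ together with its first derivatives, uniformly on $Y_\Gamma$. This is where I would combine the two bounds already available in the excerpt. On the one hand, compact support of $\Psi$ and smoothness give, via (\ref{eq:boundIP}), the super-polynomial decay $|\langle \Psi, f\rangle| \ll_N \lambda_f^{-N}$ for every $N \geq 1$. On the other hand, the sup-norm bound (\ref{eq:spectralbound}) shows that each $f$, and after a standard elliptic bootstrap also $R_0 f$ and any finite number of its derivatives, grows at most polynomially in $\lambda_f$ when measured against the weight $\lambda_f^{-\sigma}$ for $\sigma > 1$. Pairing these two estimates, the series for $\Phi$ (and any fixed differential operator applied to it) converges absolutely and uniformly on $Y_\Gamma$, hence defines a smooth weight $2$ automorphic form, and $L_2$ may be interchanged with the summation.

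The main obstacle I anticipate is the uniform sup-norm control on $R_0 f$ (and on one further derivative, needed to pull $L_2$ inside the sum). The bound (\ref{eq:spectralbound}) is stated only for $f$ itself in weight $0$, so I would either (i) invoke the weight-$2$ analogue of the same pre-trace estimate from \cite[Ch.~IV]{Chavel84}, or (ii) bypass that by using $|\!|R_0 f|\!|_2^2 = \langle L_2 R_0 f, f\rangle = \lambda_f$ together with Sobolev embedding on the compact orbifold $Y_\Gamma$, which gives $|\!|R_0 f|\!|_\infty \ll \lambda_f^A$ for some fixed $A$ after applying $\Delta$ sufficiently many times and using $\Delta^k f = \lambda_f^k f$. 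Either way the polynomial loss is harmless because $|\langle \Psi, f\rangle|$ decays faster than any polynomial. Once convergence is in hand, the identity $L_2 \Phi = \Psi$ is immediate from the construction.
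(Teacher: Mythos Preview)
Your proposal is correct and follows essentially the same approach as the paper: both construct $\Phi = R_0 \Delta^{-1}\Psi$ via the spectral expansion, using (\ref{eq:boundIP}) for decay of the coefficients and (\ref{eq:spectralbound}) for control of the eigenfunctions. The only cosmetic difference is that the paper first shows the weight-$0$ series $\Delta^{-1}\Psi = \sum_{\lambda_f>0} \lambda_f^{-1}\langle\Psi,f\rangle f$ converges to a smooth function and then applies $R_0$ once, whereas you work directly with the weight-$2$ series $\sum \lambda_f^{-1}\langle\Psi,f\rangle R_0 f$; the former slightly streamlines the sup-norm issue you flag, since smoothness of $\Delta^{-1}\Psi$ makes applying the differential operator $R_0$ automatic.
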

\begin{proof}
Recall the basic identity $L_2R_0= \Delta$ from eq. (\ref{eq:Delta}). The idea, going back to \cite{DuImTo}, is to pick $\Phi= R_0 \Delta^{-1} \Psi$, where $\Delta^{-1}$ denotes the resolvent operator $R(s; \Delta)$ at $s=0$ which is well-defined since $\langle \Psi,1\rangle=0$. Concretely, by combining the spectral expansion (\ref{eq:spectralcompact}), the estimate (\ref{eq:boundIP}) and the convergence of (\ref{eq:spectralbound}) we have
\begin{align}
\Delta^{-1} \Psi= \sum_{f\in B(\Gamma,0):\lambda_f>0} \frac{\langle \Psi, f\rangle}{\lambda_f} f, 
\end{align}
with the sum converging absolutely and uniformly. Thus $\Delta^{-1} \Psi$ defines a smooth function on $X_\Gamma$ and it follows that $\Phi= R_0 \Delta^{-1} \Psi\in \mathcal{A}(\Gamma, 2)$ as wanted. 
\end{proof}

\begin{lemma}[Non-compact case]\label{lem:ODE}
Let $\Gamma\leq \PSL_2(\R)$ be a discrete and cofinite subgroup with a cusp at infinity of width one. Let $P_{m}(\cdot|\psi)\in C_c^\infty(X_\Gamma)$ be an incomplete Poincar\'{e} series as defined in eq. (\ref{eq:Poincare}) with $\psi\in C_c^\infty(\R_{>0})$ and $m\geq 0$ satisfying $\langle P_{m}(\cdot|\psi),1 \rangle=0$ (which is automatic for $m\neq 0$). Then there exists $\varphi: \R_{>0}\rightarrow \C$ which is smooth and rapidly decaying at $0$ and $\infty$ such that
 \begin{equation} L_2 P_{m,2}(z|\varphi )= P_{m}(z|\psi),\end{equation} 
 where $L_2=1+2iy\partial_{\overline{z}}=1+iy\frac{\partial }{\partial x}-y\frac{\partial }{\partial y}$ denotes the lowering operator of weight $2$.
\end{lemma}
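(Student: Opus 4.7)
The plan is to reduce the problem to solving a first-order linear ODE, by exploiting the explicit seed-structure of the Poincar\'{e} series. I would look for $\varphi$ such that the $\Gamma_\infty$-invariant seed $\tilde{\varphi}(z):=\varphi(\Im z)e(m\Re z)$ satisfies $L_2\tilde{\varphi}(z)=\psi(\Im z)e(m\Re z)$ pointwise on $\Hb$. Once such a $\varphi$ is found and is rapidly decaying at $0$ and $\infty$, I can apply $L_2$ term-by-term to the sum defining $P_{m,2}(\cdot|\varphi)$ using the weight-$2$ intertwining relation (\ref{eq:intertwine}) in the form $L_2(\tilde{\varphi}(\gamma z)j_\gamma(z)^{-2})=(L_2\tilde{\varphi})(\gamma z)$. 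The interchange is legitimized by absolute convergence of the resulting sum, which is just $P_m(\cdot|\psi)$, as controlled by the bound (\ref{eq:poincareestimate}).

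Using the explicit form $L_2=1+iy\partial_x-y\partial_y$, a short direct computation yields
$$L_2\tilde{\varphi}(z)=\bigl[(1-2\pi my)\varphi(y)-y\varphi'(y)\bigr]e(mx),$$
so the problem is reduced to solving the first-order linear ODE
$$y\varphi'(y)+(2\pi my-1)\varphi(y)=-\psi(y),\qquad y>0.$$
Multiplying by the integrating factor $e^{2\pi my}/y$ and integrating from $0$ (legitimate since $\psi$ has compact support in $(0,\infty)$) produces the explicit candidate
$$\varphi(y):=-y\,e^{-2\pi my}\int_0^y \frac{e^{2\pi mt}\psi(t)}{t^2}\,dt.$$

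I would then verify smoothness and decay. Smoothness on $\R_{>0}$ is clear since $\psi$ is smooth and compactly supported in $(0,\infty)$. Rapid decay as $y\to 0^+$ is automatic: for $y$ below $\inf\supp\psi$ the integral vanishes identically, so $\varphi$ is zero in a neighbourhood of $0$. For $m>0$, rapid decay as $y\to\infty$ follows from the factor $e^{-2\pi my}$, since the integral is constant for $y>\sup\supp\psi$. The delicate case is $m=0$, where
$$\varphi(y)=-y\int_0^y \frac{\psi(t)}{t^2}\,dt$$
grows linearly at infinity unless $\int_0^\infty \psi(t)/t^2\,dt=0$. Unfolding the inner product, however, gives
$$\langle P_0(\cdot|\psi),1\rangle=\int_0^\infty \psi(y)\,\frac{dy}{y^2},$$
so the standing hypothesis $\langle P_0(\cdot|\psi),1\rangle=0$ is exactly what forces this integral to vanish; with it, $\varphi$ is in fact compactly supported in $(0,\infty)$.

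The only genuine obstacle is this last point: the mean-zero condition on $\psi$ is indispensable, because the resolvent of the lowering operator at the constant mode $m=0$ is otherwise ill-defined, mirroring the role played by $\Delta^{-1}$ on the orthogonal complement of the constants in the co-compact argument of Lemma \ref{lem:antiderivcpt}. Everything else is a routine verification.
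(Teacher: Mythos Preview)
Your proof is correct and essentially identical to the paper's: both reduce via the intertwining relation to the same first-order ODE, write down the same explicit solution $\varphi(y)=-y\,e^{-2\pi my}\int_0^y e^{2\pi mt}\psi(t)t^{-2}\,dt$, and verify rapid decay by the exponential factor for $m>0$ and by unfolding $\langle P_0(\cdot|\psi),1\rangle=\int_0^\infty\psi(y)y^{-2}\,dy=0$ for $m=0$.
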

\begin{proof} From the intertwining property (\ref{eq:intertwine}) we conclude for $\varphi:\R_{>0}\rightarrow \C$ of Schwarz class: 
$$L_2 P_{m,2}(z|\varphi)= P_{m}(z|-y\tfrac{\partial}{\partial y}\varphi-(2\pi m y-1)\varphi) .$$
Thus we have reduced the task to solving a first order ODE:
$$\tfrac{\partial}{\partial y}\varphi+(2\pi m-y^{-1})\varphi= -y^{-1}\psi\Leftrightarrow \varphi(y)=-e^{-2\pi m y} y\left(\int_0^y e^{2\pi m t}\frac{\psi(t)}{t^2} dt+c\right), c\in \C.$$
We claim that putting $c=0$ yields the desired solution. For $m> 0$ this follows by the rapid decay of $e^{-2\pi m y}$ at $\infty$ and the compact support of $\psi$. For $m=0$ we recall that by unfolding:
$$0=\langle P_{0}(\cdot|\psi),1 \rangle= \int_0^\infty \psi(y)\frac{dy}{y^2}, $$
using the orthogonality assumption, which implies that $\varphi$ has compact support (using again that $\psi$ is compactly supported). This completes the proof.  
\end{proof}

\subsection{Proof of main result}
We are now ready to prove the main result. To simplify notation we put 
$$\mu_n:=\frac{1}{\vol(\PP(\Cc_n,\FF))}\PP(\Cc_n,\FF) d\mu,\quad n\geq 1.$$  
with $d\mu$ the hyperbolic measure as defined in (\ref{eq:hyperbolic}). We begin by considering the compact case where the proof is straightforward given the above.
\begin{proof}[Proof of Theorem \ref{thm:main}, compact case]
Assume that $\Gamma$ is co-compact and let $\Psi\in \mathcal{A}(\Gamma,0)$ be a smooth function on $X_\Gamma$. Then by Lemma \ref{lem:antiderivcpt} there exists $\Phi\in \mathcal{A}(\Gamma,2)$ such that
\begin{align}
\mu_n(\Psi)=\frac{\langle \Psi,1\rangle}{\vol(\Gamma)}+\mu_n\left(\Psi-\frac{\langle \Psi,1\rangle}{\vol(\Gamma)}\right)
=\frac{\langle \Psi,1\rangle}{\vol(\Gamma)}+\mu_n(L_2 \Phi).
\end{align}
By Lemma \ref{lem:Stokes} above, we see that
\begin{equation}\label{eq:afterstoke}\mu_n(L_2 \Phi)=\frac{1}{\vol(\PP(\Cc_n,\FF))}\left( \int_{\Cc_n} \Phi(z)\tfrac{dz}{\Im z}-  \sum_{i=1}^{2g} \left(\int_{\CC_{i}(\FF)}\Phi(z) \tfrac{dz}{\Im z}\right) \, \int_{\Cc_n} \omega_{i}(\FF)\right). \end{equation}
Note that the integrals over $\CC_{i}(\FF)$ are independent of $n$. Recall from eq. (\ref{eq:DF}) that we can write the cohomology class $\omega_{i}(\FF)$ as a sum of holomorphic and anti-holomorphic $1$-forms: $(\Im z)f_i(z)\tfrac{dz}{\Im z}+\overline{(\Im z)g_i(z)\tfrac{dz}{\Im z}}$ where $(\Im z)f_i$, resp. $\overline{(\Im z)g_i}$, is a weight $2$, resp. $-2$, automorphic form for $\Gamma$. So by Lemma \ref{lem:weight2} the integrals over $\Cc_n$ in eq. (\ref{eq:afterstoke}) are exactly the measure $\sum_{\CC\in \Cc_n}\mu_{\CC}$ applied to test functions on the unit tangent bundle of weight $2$. Also recall from eq. (\ref{eq:kneq0}) that test functions of non-zero weight are orthogonal to the constant function. Now the assumption that $(\Cc_n)_{n\geq 1}$ equidistribute in the unit tangent bundle $\Gamma\backslash G$ applied to the test functions $\Phi, (\Im z)f_i, (\Im z)\overline{g_i}$ combined with the lower bound on the volume from Proposition \ref{prop:vol} yields:
$$ |\mu_n(L_2 \Phi)|\ll_\Gamma \frac{1}{|\Cc_n|}\left( \left|\int_{\Cc_n} \Phi(z)\tfrac{dz}{\Im z}\right|+\sum_{i=1}^{2g} \left|\int_{\CC_{i}(\FF)}\Phi(z) \tfrac{dz}{\Im z}\right| \cdot \left|\int_{\Cc_n} \omega_{i}(\FF)\right|\right)\rightarrow 0,\quad n\rightarrow \infty, $$
which finishes the proof in view of Lemma \ref{lem:Zelditch}. Finally, we remark that if the equidistribution (\ref{eq:equidigeodesics}) holds with an effective error-term then the above argument yields effective equidistribution with the same rate. 
\end{proof}
In the non-compact case, special care has to be given to the behavior at the cusp. This is ensured by the non-escape of mass as given in Proposition \ref{prop:nonescape} and the use of Poincar\'{e} series. %From this the result follows by a standard approximation argument.
\begin{proof}[Proof of Theorem \ref{thm:main}, non-compact case]
We may assume that $\Gamma$ has a cusp at $\infty$ of width $1$. Consider first the case $\Psi\in C_c^\infty(Y_\Gamma)$ with $\langle \Psi,1\rangle=0$. By Lemma \ref{lem:Poincare} we can express $\Psi$ as a sum of Poincar\'{e} series orthogonal to the constant function $1$ converging in the uniform norm. Thus by a standard approximation argument we are reduced to proving 
\begin{equation}\mu_n(P_{m}(\cdot| \psi))\rightarrow 0,\quad n\rightarrow \infty,\end{equation}
for $m\in \Z$ and $\psi\in C_c^\infty(\R_{>0})$ (fixed) with $\langle P_{m}(\cdot| \psi),1\rangle=0$. By conjugation we may assume that $m\geq 0$. Lemma \ref{lem:ODE} now yields:
\begin{align}
\mu_n(P_{m}(\cdot| \psi))=\mu_n(L_2 P_{m,2}(\cdot| \varphi)),\end{align}
with $\varphi:\R_{>0}\rightarrow \C$ rapidly decaying at $0$ and $\infty$, which in turn by Lemma \ref{lem:Stokes} equals 
\begin{align}\frac{1}{\vol(\PP(\Cc_n,\FF))}\left( \int_{\Cc_n} P_{m,2}(z| \varphi)\tfrac{dz}{\Im z}-  \sum_{i=1}^{2g} \left(\int_{\CC_{i}(\FF)}P_{m,2}(z| \varphi) \tfrac{dz}{\Im z}\right) \, \int_{\Cc_n} \omega_{i}(\FF)\right). 
\end{align}
By the same argument as in the compact case this converges to zero by the equidistribution of $\Cc_n$ as $n\rightarrow \infty$ in view of Lemma \ref{lem:Zelditch}. 

Now consider a general smooth and compactly supported function $\Psi\in C_c^\infty(Y_\Gamma)$. Let $Y>0$ and recall the definition of the bulk $\FF(Y)$ as defined in eq. (\ref{eq:bulk}) which we will identify with its projection to $Y_\Gamma$. The strategy is to reduce to the above treated case. Let $\psi_Y: Y_\Gamma \rightarrow [0,1]$ be a smooth approximation to the indicator function $\mathbf{1}_{\FF(Y)}$ so that $\psi_Y(z)=1$ for $z\in \FF(Y)$ and $\psi_Y(z)=0$ for $z\notin \overline{\FF(Y+1)}$ (assuming here that $Y$ is sufficiently large so that the cuspidal regions $\FF_\mathfrak{a}(Y)$ are all horostrips as in (\ref{eq:FFHH})) and write
\begin{align}
\mu_n(\Psi)=\mu_n\left(\Psi- \psi_Y\frac{\langle\Psi,1\rangle }{\langle\psi_Y,1\rangle }\right)+\langle\Psi,1\rangle \frac{\mu_n(\psi_Y) }{\langle\psi_Y,1\rangle }.
\end{align}
To deal with the second term we observe that by monotonicity  
$$\vol(\FF(Y))\leq \langle\psi_Y,1\rangle\leq \vol(\FF(Y+1))$$
which together with eq. (\ref{eq:bulkvol}) implies that 
$$\langle\psi_Y,1\rangle\rightarrow \vol(\Gamma),\quad \text{as } Y\rightarrow \infty.$$ Similarly by monotonicity and since $\mu_n$ is a probability measure we have 
\begin{align*}
1-\sum_{\mathfrak{a}\in \mathrm{cusp}(\FF)}\mu_n(\FF_\mathfrak{a}(Y))=\mu_n(\FF(Y))\leq \mu_n(\psi_Y)&\leq \mu_n(\FF(Y+1))\\
&=1-\sum_{\mathfrak{a}\in  \mathrm{cusp}(\FF)}\mu_n(\FF_\mathfrak{a}(Y+1)),\end{align*}
which combined with  non-escape of mass from Proposition \ref{prop:nonescape} gives that for each $\eps>0$ there exists $Y=Y(\eps)>0$ and $n_0=n_0(\eps)\geq 1$ (depending also on $\FF$) such that
$$\left|\frac{\mu_n(\psi_Y) }{\langle\psi_Y,1\rangle }-\frac{1}{\vol(\Gamma)}\right|\leq \eps,\quad n\geq n_0.  $$
 Since $\Psi- \psi_Y\frac{\langle\Psi,1\rangle }{\langle\psi_Y,1\rangle } $ is smooth, compactly supported and orthogonal to the constant function we conclude by the above that for $n$ sufficiently large (depending on $\eps$ through $Y=Y(\eps)$) the contribution is less than $\eps$, say. Since $\eps>0$ was arbitrary this finishes the proof in view of Lemma \ref{lem:Zelditch}.  
  \end{proof}
  \begin{remark}\label{remark:effective}
Obtaining an effective version of Theorem \ref{thm:main} in the non-co-compact case leads to subtleties at the cusps. We note however that in \cite{DuImTo} and \cite{HumphriesNordentoft22} equidistribution statements for partial coverings were obtained with explicit error-terms (power saving in the discriminant) in the non-compact case. Similarly, by a regularization process as in \cite[Sec. 5.3]{HumphriesNordentoft22} this \emph{can} be proved with the techniques of this paper under the assumption that the non-constant Weyl sums (for the spectral basis of residual and cuspidal Maa{\ss} forms and Eisenstein series) tends effectively to zero with polynomial dependence on the spectral parameter. This is however a stronger notion/assumption of equidistribution than the one used in Definition \ref{def:equid}.   \end{remark}
 \section{Applications}\label{sec:appl} In this section we outline some applications of Theorem \ref{thm:main}. In light of Lemma \ref{lem:trivial} we want examples of (sequences of) collections of geodesics $(\Cc_n)_{n\geq 1}$ that on the one hand, equidistribute in the unit tangent as $n\rightarrow \infty$ and on the other hand, do \emph{not} satisfy that $\CC\in \Cc_n\Rightarrow \overline{\CC}\in \Cc_n$ (where $\overline{\CC}$ denotes $\CC$ equipped with the opposite orientation). There exists by now many such examples in the literature as it falls under the topic of ``sparse equidistribution'' as outlined in the ICM adress by Michel and Venkatesh \cite{MichelVenk06}. 
%Some of the statements that we will apply are not explicitly available in the literature. We will below state the equidistribution statements that we need and sketch how they are proved using existing tools.  
 %Define "arithmetic subgroups" (i.e.\ a congruence subgroup of $\PSL_2(\Z)$ or of the co-compact subgroup $\Gamma_B$ associated to an indefinite quaternion algebra $B/\Q$). 
\subsection{Sparse equidistribution of closed geodesics}\label{sec:sparse}
When quoting the literature below we are throughout using the analogue of Lemma \ref{lem:Zelditch} for the unit tangent bundle which ensures that it suffices to check the convergence (\ref{eq:geoequid}) for smooth and compactly supported functions $\Psi:\Gamma\backslash G\rightarrow \C$, see \cite[Cor.\ 6.5]{Zelditch92} for details. The first example we will consider is the following sparse equidistribution result applicable for any discrete and cofinite subgroup $\Gamma\leq \PSL_2(\R)$.% which follows from the ergodicity of the geodesic flow. %obtained by Constantinescu and the first named author \cite[Thm. 5.8]{ConstantinescuNordentoft24} by combining the method of Aka--Einsiedler \cite{AkaEinsiedler16} with an equidistribution result of Zelditch \cite{Zelditch92}.
\begin{thm}\label{thm:intro3}
Fix $\eps>0$. Let $\Gamma\leq \PSL_2(\R)$ be a discrete and cofinite subgroup.  Let $L_1<L_2<\ldots$  be a sequence of real numbers such that  $L_n\rightarrow \infty$ as $n\rightarrow \infty$. For each $n\geq 1$, let
$$\Cc_n\subset\{\CC\subset Y_\Gamma: |\CC|\leq L_n\}.$$   
be a ``not too thin subset'' of the packet of primitive oriented closed geodesics of length $\leq L_n$, in the sense that 
$$ |\Cc_n|\geq \eps \cdot |\{\CC\subset Y_\Gamma: |\CC|\leq L_n\}|. $$ 
Then $\Cc_1,\Cc_2,\ldots$ equidistribute  in the unit tangent bundle in the sense of Definition \ref{def:equid}.
\end{thm}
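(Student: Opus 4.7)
The plan is to deduce the result from Zelditch's equidistribution theorem for the full primitive length packet by a soft measure-theoretic argument of Aka--Einsiedler type, exploiting the ergodicity of the geodesic flow on $\Gamma\backslash G$.

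First, let $S_N := \{\CC\subset Y_\Gamma : |\CC|\leq N\}$ denote the full packet of primitive oriented closed geodesics, and set $L_N := \sum_{\CC\in S_N}|\CC|$, $L'_N := |\Cc_N|$. Introduce the normalized probability measures
$$\mu_N := \frac{1}{L_N}\sum_{\CC\in S_N}\mu_\CC, \qquad \nu_N := \frac{1}{L'_N}\sum_{\CC\in \Cc_N}\mu_\CC$$
on the unit tangent bundle $\Gamma\backslash G$. By Zelditch's theorem \cite{Zelditch92}, $(\mu_N)$ converges in the weak-$*$ topology (tested against $C_c^\infty$) to the normalized Liouville measure $\mu := \vol(\Gamma)^{-1}\mu_G$; moreover the limit $\mu$ is itself a probability measure, so there is no escape of mass and the sequence $(\mu_N)$ is tight.

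The key observation is the decomposition $\mu_N = (L'_N/L_N)\,\nu_N + ((L_N-L'_N)/L_N)\,\nu_N'$, where $\nu_N'$ is the analogous normalized measure for the complement $S_N\setminus \Cc_N$. Since $\nu_N'$ is non-negative and $L'_N/L_N\geq \eps$ by hypothesis, one obtains the pointwise domination of Radon measures
$$\nu_N \leq \eps^{-1}\,\mu_N.$$
This domination implies tightness of $(\nu_N)$ from tightness of $(\mu_N)$, so every weak-$*$ subsequential limit $\nu$ is a probability measure, and passing the inequality to the limit yields $\nu \leq \eps^{-1}\mu$. By Radon--Nikodym, $\nu = f\,\mu$ with $0\leq f\leq \eps^{-1}$.

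Finally, each $\nu_N$ is supported on a finite union of compact orbits of the geodesic flow $A$ and is therefore $A$-invariant; hence so is any weak-$*$ limit $\nu$. Classical Hopf ergodicity of the geodesic flow on $(\Gamma\backslash G,\mu)$ forces the $A$-invariant density $f$ to be $\mu$-a.e.\ constant, and since $\nu$ is a probability measure we conclude $\nu = \mu$. As every subsequential weak-$*$ limit equals $\mu$, the full sequence $(\nu_N)$ converges to $\mu$ in the weak-$*$ topology, which is exactly equidistribution in the sense of Definition \ref{def:equid}. The main technical subtlety lies in the non-compact case, where tightness of $(\nu_N)$ must be established to prevent escape of mass; however this is automatic from the domination $\nu_N\leq \eps^{-1}\mu_N$ together with the tightness of $(\mu_N)$ supplied by Zelditch's theorem, so no additional input beyond Zelditch and ergodicity is required.
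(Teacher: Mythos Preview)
Your argument is correct and matches the approach the paper points to: the paper's proof is only a sketch citing \cite{ConstantinescuNordentoft24}, but it explicitly says the result follows by combining Zelditch's equidistribution of the full packet with the Aka--Einsiedler method, which is precisely the domination-plus-ergodicity argument you wrote out. Your handling of the non-compact case (deducing tightness of $(\nu_N)$ from that of $(\mu_N)$ via $\nu_N\leq \eps^{-1}\mu_N$) is the right way to close the gap, and the final step---any $A$-invariant probability measure absolutely continuous with bounded density against the ergodic Liouville measure must equal it---is exactly the soft input that makes the positive-proportion case work without any effective estimates.
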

\begin{proof}[Proof sketch]
This follows from the ergodicity of the geodesic flow as employed in \cite{AkaEinsiedler16} combined with the equidistrbution of geodesics of bounded length as in e.g. \cite[Cor.\ 6.5]{Zelditch92}, see \cite[Thm.\ 4.7]{ConstantinescuNordentoft24} for the complete argument. 
\end{proof}

We will now go on to describe equidistribution results for Fuchsian groups of arithmetic origin.  In this setting the closed geodesics fit into the framework of \emph{periodic torus orbits} and have been extensively studied from many perspectives, see e.g. \cite{EinLindMichVenk09}. The arithmetic structure allows one to obtain equidistribution for much sparser collections of closed geodesics. Firstly we have the following version for subcollections of the length packets for the modular group.% two results concerning their distribution.    
%All oriented closed geodesics on $X_B(M):=\Gamma_B(M)\backslash \Hb$ . Let $K/\Q$ be a real quadratic field and $\mathcal{O}\subset \mathcal{O}_K$ an order such that $K$ admits an $(\mathcal{O},\mathcal{E}_M)$-optimal embedding into $B$. For each such embedding  

\begin{thm}[Einsiedler--Lindenstrauss--Michel--Venkatesh]\label{thm:intro1}
Let $\FF$ be a fundamental polygon for the modular surface $Y_0(1):=\PSL_2(\Z)\backslash \Hb$.  Let $\ell_1<\ell_2<\ldots $ be the primitive length spectrum of $Y_0(1)$. For $n\geq 1$, let
$$\Cc_n\subset \{\CC\subset Y_0(1): |\CC|=\ell_n\},
$$ 
be a ``not too thin subset'' of the length packet, in the sense that 
$$ \frac{\log |\Cc_n|}{\log (\sum_{\CC: |\CC|=\ell_n}|\CC|))}\rightarrow 1,\quad n\rightarrow \infty. $$ 
Then $\Cc_1,\Cc_2,\ldots$ equidistribute in the unit tangent bundle in the sense of Definition \ref{def:equid}.
\end{thm}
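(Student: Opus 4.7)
The plan is to deduce this from the ergodic framework of Einsiedler--Lindenstrauss--Michel--Venkatesh \cite{EinLindMichVenk12} for sub-packets of periodic torus orbits on arithmetic quotients.

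First I would parametrize the oriented closed geodesics of length $\ell_n$ on $Y_0(1)$ bijectively with the narrow class group $\Cl_{D_n}^+$ of the real quadratic order of discriminant $D_n$ satisfying $\ell_n = 2\log \epsilon_{D_n}$, via the classical Gauss correspondence with $\PSL_2(\Z)$-classes of integral indefinite binary quadratic forms of discriminant $D_n$. Siegel's bound gives $h^+(D_n) \cdot \ell_n = D_n^{1/2+o(1)}$, so the hypothesis $\log|\Cc_n|/\log\bigl(\sum_{|\CC|=\ell_n}|\CC|\bigr) \to 1$ becomes the combinatorial density condition $|\Cc_n| = h^+(D_n)^{1-o(1)}$: the sub-packet is ``logarithmically all of the class group''.

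Next, lift to the adelic quotient $[\mathrm{PGL}_2] := \mathrm{PGL}_2(\Q)\backslash \mathrm{PGL}_2(\mathbb{A})$. Each closed geodesic in the packet corresponds to a periodic orbit of a non-split rational torus $T_{D_n}$, and these orbits are freely and transitively permuted by $\Cl_{D_n}^+$. Let $\mu_n$ be the normalized probability measure attached to $\Cc_n$; by unique ergodicity of the $K$-action on $\mathrm{PGL}_2(\Q_\infty)/\mathrm{PSO}_2$, it suffices to show that every weak-$*$ accumulation point of $(\mu_n)$ equals Haar measure on $[\mathrm{PGL}_2]$ modulo the appropriate compact-open subgroup.

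The heart of the proof then follows the ``joinings + measure classification'' template of ELMV and has two ingredients. \emph{Non-escape of mass at the cusp}: Duke's theorem \cite{Du88} with a power saving applied to the full length packet gives non-escape of mass for the full packet, which combined with the density hypothesis and a pigeonholing argument over Hecke translates transfers non-escape to $\Cc_n$. \emph{Positive entropy}: primes $\mathfrak{p}$ split in $\Q(\sqrt{D_n})$ act on the packet via Hecke correspondences commuting with the geodesic flow, and the density condition $|\Cc_n| = h^+(D_n)^{1-o(1)}$ feeds into a combinatorial lemma yielding a strictly positive lower bound on the entropy of any accumulation point with respect to the $A$-action. Lindenstrauss's measure classification theorem, together with this entropy bound and the arithmetic joining structure coming from Hecke recurrence, then forces the limit to be Haar measure, giving the desired equidistribution.

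The main obstacle is precisely the entropy lower bound: converting the logarithmic-density assumption on $\Cc_n$ into a quantitative entropy estimate requires delicate control over how Hecke orbits distribute inside $\Cc_n$, and is where all the arithmetic input — Duke's theorem with a power saving, subconvex bounds in the style of Harcos--Michel \cite{HarcosMichel06}, and effective equidistribution of the full packet — must be combined. A purely $L$-function-theoretic route using Waldspurger's formula and Plancherel on $\Cl_{D_n}^+$ fails to reach the claimed density because the currently known subconvex exponent is too weak; this is exactly why the dynamical approach is needed.
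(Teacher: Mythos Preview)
The paper treats this theorem as a black box: its entire proof is a citation to \cite{EinLindMichVenk12} as unpacked on p.~2 of \cite{AkaEinsiedler16}, together with the classical dictionary \cite{Sarnak82} identifying primitive closed geodesics on $Y_0(1)$ with class groups of real quadratic orders, so that the length packets of the statement become the discriminant packets appearing in ELMV.

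Your proposal goes further and sketches the internal architecture of the ELMV argument itself --- adelic lift, non-escape of mass, positive entropy from the density hypothesis via Hecke, and measure classification. This is the correct overall picture, and your closing remark that a direct Waldspurger/subconvexity route cannot reach the logarithmic density hypothesis is precisely why the ergodic method is invoked. Two imprecisions are worth flagging. First, you list ``Duke's theorem with a power saving'' and ``subconvex bounds in the style of Harcos--Michel'' among the inputs to the \emph{entropy} lower bound; in fact that step is combinatorial (density hypothesis plus the structure of split Hecke orbits), while any analytic input is confined to the non-escape step and does not require a power saving. Second, the phrase ``unique ergodicity of the $K$-action on $\mathrm{PGL}_2(\Q_\infty)/\mathrm{PSO}_2$'' is not the mechanism for passing from the adelic picture to $\PSL_2(\Z)\backslash \PSL_2(\R)$; that descent is just strong approximation and projection modulo the appropriate compact-open subgroup.
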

\begin{proof}
This follows from the results of \cite{EinLindMichVenk12} as explained in the discussion on page 2 of \cite{AkaEinsiedler16}. Note that in \cite{EinLindMichVenk12} they consider  closed geodesics $\CC_A$ associated to elements $A\in \Cl(\mathcal{O}_D)$ of class groups of orders in (real) quadratic fields of discriminant $D\rightarrow \infty$. It is however a classical fact \cite{Sarnak82} that all closed geodesics on $Y_0(1)$ can be obtained in this way and that $|\CC_A|\rightarrow \infty$ for $A\in \Cl(\mathcal{O}_D)$ as $D\rightarrow \infty$. %All primitive hyperbolic conjugacy classes of $\Gamma_B(M)$ contains an element $\psi(\epsilon(\mathcal{O}))\in \Gamma_B(M)$ where $\psi:K\rightarrow B$ is an $(\mathcal{O},\mathcal{E}_M)$-optimal embedding where $\mathcal{O}\subset \mathcal{O}_K$ is an order of a real quadratic field $K$ and $\epsilon(\mathcal{O})$ is the positive fundamental unit of $\mathcal{O}$.  
\end{proof}
Let $M$ be a square-free integer and let $Y_0(M):=\Gamma_0(M)\backslash \Hb$ denote the modular surface of level $M$. Let $D>0$ be a fundamental discriminant such that all prime divisors of $M$ split in $\Q(\sqrt{D})$. Then there exists an optimal embedding $\Q(\sqrt{D})\hookrightarrow M_2(\Q)$ which gives rise to a map
$$\Cl^+_D\rightarrow \{\text{primitive oriented closed geodesics on $Y_0(M)$} \},\quad A\mapsto \CC_A(M),$$ 
satisfying $|\CC_A(M)|=2\log \epsilon_D$ where $\epsilon_D\in \Q(\sqrt{D})$ denotes the fundamental totally positive unit (for details consult e.g. \cite[Sec. 6]{Popa06}). We have the following general equidistribution for cosets of sufficiently large subgroups.
%{\color{red}Let $B/\Q$ be an indefinite quaternion algebra, i.e.\ there exists an isomorphism $i_\infty:B\otimes \R\xrightarrow{\sim} M_2(\R)$. Denote the discriminant by $D_B$ and the reduced norm by $N_B:B\rightarrow \Q$. Let $\mathcal{O}\subset B$ be an Eichler order of $B$ of square-free level $M$ with $(M,D_B)=1$. It is a well-known fact that the image of the units of $\mathcal{O}$:
%$$\Gamma(\mathcal{O}, B):=i_\infty(\{x\in\mathcal{O}: N_B(x)=1\}) \leq \PSL_2(\R),$$ 
%defines a discrete and cofinite subgroup which is co-compact if and only if $B$ is not the split algebra $M_2(\Q)$. In the case where $B=M_2(\Q)$ is the split algebra, the groups $\Gamma(\mathcal{O}, B)$ correspond to the Hecke congruence groups. Let $D>0$ be a fundamental discriminant such that all prime divisors of $D_B$ are inert in $\Q(\sqrt{D})$ and all prime divisors of $M$ are split in $\Q(\sqrt{D})$. Then there exists an optimal embedding $\Q(\sqrt{D})\hookrightarrow B$ which gives rise to a map
%$$\Cl^+_D\rightarrow \{\text{primitive oriented closed geodesics on $\Gamma(\mathcal{O},B)\backslash \Hb$} \},\quad A\mapsto \CC_A,$$ 
%satisfying $|\CC_A|=2\log \epsilon_D$ where $\epsilon_D\in \Q(\sqrt{D})$ denotes the fundamental positive unit (ref:Voigt?).}
\begin{thm}[Waldspurger, Michel--Harcos, Popa]\label{thm:intro2}
Fix $\delta \in [0,\frac{1}{2826})$. Let $\FF$ be a fundamental polygon for the modular surface $Y_0(M)$ of square-free level $M$. Let $(D_n)_{n\geq 1}$ be a sequence of distinct positive fundamental discriminants such that all prime divisors of $M$ split in $\Q(\sqrt{D_n})$. For $n\geq 1$, let
$$\Cc_n=\{\CC_A(M)\subset Y_0(M): A\in BH_n\},
$$ 
be the collection of primitive oriented closed geodesics on $Y_0(M)$ associated to the coset $BH_n$ of a subgroup $H_n\leq  \Cl_{D_n}^+$ of the narrow class group of $\Q(\sqrt{D_n})$ of index $\leq D_n^{\delta}$. Then $\Cc_1,\Cc_2,\ldots$ equidistribute in the unit tangent bundle in the sense of Definition \ref{def:equid}.
\end{thm}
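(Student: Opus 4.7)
The plan is to prove equidistribution of the collections $\Cc_n$ in the unit tangent bundle $\Gamma_0(M)\backslash G$ via Weyl sums and subconvex bounds for Rankin-Selberg $L$-functions; the conclusion of the corollary then follows immediately from Theorem \ref{thm:main}. By Lemma \ref{lem:Zelditch} and the $K$-isotypic decomposition of smooth functions on $\Gamma_0(M)\backslash G$, it suffices to verify (\ref{eq:geoequid}) for test functions $\tilde\phi$ obtained by lifting a dense family of weight-$k$ automorphic forms for $\Gamma_0(M)$ (Maa\ss{} newforms, holomorphic newforms of even weight $\geq 2$ together with their translates under the operators in (\ref{eq:levelraising}), the constant, and truncated Eisenstein series). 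The constant supplies the main term, and the remaining pieces must produce $o(1)$ after normalization.

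Since $|\CC_A(M)|=2\log\epsilon_{D_n}$ is the same for every $A\in \Cl_{D_n}^+$, the denominator in (\ref{eq:geoequid}) is $|H_n|\cdot 2\log\epsilon_{D_n}$, and we are reduced to showing, for each fixed non-constant Hecke newform $\phi$, that the Weyl sum
$$\Sigma_n(\phi)\;:=\;\sum_{A\in BH_n}\mu_{\CC_A(M)}(\tilde\phi)$$
satisfies $\Sigma_n(\phi)=o(|H_n|\log\epsilon_{D_n})$. Orthogonality on $\widehat{\Cl_{D_n}^+/H_n}$ gives
$$\Sigma_n(\phi)=\frac{|H_n|}{|\Cl_{D_n}^+|}\sum_{\chi\in\widehat{\Cl_{D_n}^+/H_n}}\bar\chi(B)\,W_n(\phi,\chi),\qquad W_n(\phi,\chi):=\sum_{A\in\Cl_{D_n}^+}\chi(A)\,\mu_{\CC_A(M)}(\tilde\phi),$$
and the explicit Waldspurger formula of Popa \cite{Popa06}, valid under the hypothesis that every prime divisor of $M$ splits in $\Q(\sqrt{D_n})$, yields
$$|W_n(\phi,\chi)|^2\;\asymp\;D_n^{1/2}\,\frac{L(1/2,\phi\otimes\theta_\chi)}{L(1,\mathrm{sym}^2\phi)}\,\mathcal{L}_M(\phi,\chi),$$
where $\theta_\chi$ is the theta series attached to the unramified class group character $\chi$ of $\Q(\sqrt{D_n})$ and $\mathcal{L}_M(\phi,\chi)$ is a local constant bounded uniformly in $n$ and $\chi$. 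The subconvex bound of Harcos--Michel \cite{HarcosMichel06} supplies an absolute $\delta>1/2649$ such that $L(1/2,\phi\otimes\theta_\chi)\ll_{\phi,\varepsilon}D_n^{1/2-\delta+\varepsilon}$, while Siegel's theorem gives $L(1,\mathrm{sym}^2\phi)\gg 1$ and $|\Cl_{D_n}^+|\gg_\varepsilon D_n^{1/2-\varepsilon}$.

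Assembling the bounds: the identity $\tfrac{|H_n|}{|\Cl_{D_n}^+|}\cdot[\Cl_{D_n}^+:H_n]=1$ together with the triangle inequality gives $|\Sigma_n(\phi)|\leq \max_\chi |W_n(\phi,\chi)|\ll_{\phi,\varepsilon}D_n^{1/2-\delta/2+\varepsilon}$, and dividing by $|H_n|\log\epsilon_{D_n}\gg D_n^{1/2-1/5298-\varepsilon}$ yields
$$\frac{|\Sigma_n(\phi)|}{|H_n|\log\epsilon_{D_n}}\;\ll_{\phi,\varepsilon}\;D_n^{-\delta/2+1/5298+2\varepsilon}\;\longrightarrow\;0,$$
since $\delta/2>1/5298$; this is precisely how the exponent $1/5298$ in the hypothesis is calibrated. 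The main obstacle I expect is producing a genuinely uniform version of Popa's formula: specifying the local factor $\mathcal{L}_M(\phi,\chi)$ with polynomial control in $M$ and complete uniformity in $\chi$ (choosing the correct local test vectors at the primes of $M$, which is possible exactly because those primes split in $\Q(\sqrt{D_n})$), together with controlling the archimedean side as the analytic conductor of $\theta_\chi$ scales with $D_n$ while $\phi$ is fixed---the hybrid subconvex bound of Harcos--Michel is tailored to precisely this regime. The Eisenstein and residual spectrum contributions are handled by standard truncation, and the weak-$^\ast$ convergence thus obtained is upgraded to Definition \ref{def:equid} by the non-escape of mass built into Proposition \ref{prop:nonescape}.
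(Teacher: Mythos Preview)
Your approach is essentially the same as the paper's: Weyl's criterion, reduction via orthogonality to full-class-group Weyl sums $W_n(\phi,\chi)$, Popa's explicit Waldspurger formula relating $|W_n|^2$ to $L(1/2,\phi\otimes\theta_\chi)$, and the Harcos--Michel subconvex bound. The numerics $\delta>1/2649\Rightarrow\delta/2>1/5298$ correctly explain the exponent in the hypothesis.

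Two points to tighten. First, the Eisenstein contribution is not disposed of by ``standard truncation'': the geodesic periods against $E_{\mathfrak a}(\cdot,1/2+it)$ unfold to Hecke $L$-values of $\theta_\chi$ (equivalently $L(s,\chi_{D_n})L(s,\chi_{D_n}\chi)$-type products), and one needs a subconvex bound in the $D_n$-aspect there as well---the paper invokes Wu for this. Second, your final sentence is confused: Definition~\ref{def:equid} \emph{is} weak-$^\ast$ convergence against $C_c^\infty(\Gamma_0(M)\backslash G)$, so nothing needs to be ``upgraded'', and Proposition~\ref{prop:nonescape} concerns non-escape of mass for \emph{partial coverings}, not for the geodesic measures $\mu_\CC$. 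Finally, note that to test against all $K$-types (weights $k\neq 0$) one needs Popa's formula beyond weight $0$; the paper points to Martin--Whitehouse for this extension, which is exactly the ``main obstacle'' you anticipate.
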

\begin{proof}
This follows by a straightforward extension of the results of Popa to automorphic forms of general weight and we will simply indicate the necessary changes. One applies Weyl's criterion: to bound the Weyl sums one combines an explicit version of the Waldspurger formula \cite{Waldspurger85}, as obtained by Popa \cite{Popa06} for weight 0, and the subconvexity bounds due to Michel \cite[Thm.\ 2]{Michel04} and Harcos--Michel\cite[Thm.\ 1]{HarcosMichel06} for the discrete Weyl sums and due to Blomer--Harcos--Michel \cite[Thm.\ 2]{BlHaMi07} for the continuous Weyl sums. The required explicit version of Waldspurger's formula in the higher weight case follows by combining the work of Martin--Whitehouse \cite[Thm. 4.2]{MartinWhitehouse09} with certain local computations. This has been carried out for weight $2$ automorphic forms by Peter Humphries and the first named author, see \cite[Lemma 6.14]{HumphriesNordentoft22}, and the same argument applies in general with minor modifications.  
\end{proof} 

Combining all of the above we obtain the claimed applications from the introduction. %Note that we can furthermore obtain an effective version of Corollary \ref{cor:intro2} using Theorem \ref{thm:intro2} above. 

\begin{proof}[Proof of Corollaries \ref{cor:intro3}, \ref{cor:intro1}, \ref{cor:intro2}, and \ref{cor:intro4}]
This follows directly by combining Theorem \ref{thm:main} with respectively, Theorems \ref{thm:intro3}, \ref{thm:intro1}, \ref{thm:intro2} and \cite[Thm. 1.1]{Nordentoft23}. 
\end{proof}
\subsection{The results of Duke--Imamo\={g}lu--T\'{o}th and Humphries--Nordentoft}
Finally, we will explain how to deduce (some of) the equidistribution results in \cite{DuImTo} and \cite{HumphriesNordentoft22} from the above. Let either $q=1$ or $q$ be prime and let $D$ be a positive fundamental discriminant for which $q$ splits in $\Q(\sqrt{D})$ if $q > 1$. Recall that \cite[Sec.\ 3]{DuImTo} and \cite[Sec.\ 3]{HumphriesNordentoft22} associated to each $A\in \Cl_D^+$ a thin subgroup $\Gamma_A(q)\subset \PSL_2(\Z)$ and a fundamental polygon $\FF_A(q)\subset \Hb$. Using the convention (\ref{eq:BBsubset}) we will think of $\FF_A(q)$ as a partial covering of $Y_0(q)=\Gamma_0(q) \backslash \Hb$. We can now translate the results of this paper to the equidistribution results in \emph{loc.\ cit.} using here the formulation in terms of continuity sets. 

\begin{cor}[Cf.\ {\cite[Thm.\ 2]{DuImTo}, \cite[Thm.\ 1.3]{HumphriesNordentoft22}}]\label{cor:itfollows}
Fix $\delta \in [0,\frac{1}{2826})$ and fix either $q = 1$ or $q$ an odd prime. For each positive fundamental discriminant $D$ for which $q$ splits in $\Q(\sqrt{D})$ if $q > 1$, choose a coset $CH$ with $C \in \Cl_D^+$ and $H = H_D$ a subgroup of $\Cl_D^+$ of index $\leq D^{\delta}$. Then for each fixed continuity set $\BB \subset Y_0(q)$ it holds that
\begin{equation}
\label{eqn:subgroupequidistribution}
\frac{\sum_{A \in CH} \vol(\FF_A(q) \cap \BB)}{\sum_{A \in CH} \vol(\FF_A(q))} \rightarrow \frac{\vol(\BB)}{\vol(\Gamma_0(q))},\quad \text{as }D\rightarrow \infty.
\end{equation}
\end{cor}
\begin{proof}
By Lemma \ref{lem:B} we are reduced to showing the convergence (\ref{eq:coverevenlydef}) for continuous and bounded functions $\phi:Y_\Gamma\rightarrow \C$. Let $\FF$ equal the fundamental polygon $\FF_0$ defined in (\ref{eq:Fstd}) when $q=1$ and a special fundamental polygon $\FF_q$ when  $q$ is prime. Then by Lemmas \ref{lem:DITequiv} and \ref{lem:HuNoequiv} we have for each $A\in \Cl_D^+$ as above that
$$ \int_{\FF_A(q)}\phi=  \int_{\PP(\CC_A(q),\FF)} \phi +m\int_{Y_0(q)} \phi,  $$
for some integer $m\geq 0$. Now the result follows from Theorem \ref{thm:main} in view of Theorem \ref{thm:intro2} using the standard fact that if $\frac{a_n}{b_n}\rightarrow \frac{a}{b}$ as $n\rightarrow \infty$ with $b_n,b>0$ then for any choice $m_n\geq 1$ it holds that $\frac{a_n+m_na}{b_n+m_n b}\rightarrow \frac{a}{b}$ as $n\rightarrow \infty$.
\end{proof}

\bibliography{mybib}
\bibliographystyle{amsplain}
\end{document}